\numberwithin{equation}{section}
\newcommand{\beq}{\begin{equation}}
\newcommand{\eeq}{\end{equation}}
\newcommand{\beqs}{\begin{eqnarray*}}
\newcommand{\eeqs}{\end{eqnarray*}}
\newcommand{\beqn}{\begin{eqnarray}}
\newcommand{\eeqn}{\end{eqnarray}}
\newcommand{\beqa}{\begin{array}}
\newcommand{\eeqa}{\end{array}}
\newcommand{\cL}{{\mathcal L}}
\newcommand{\cU}{{\mathcal U}}
\newcommand{\sq}{{\sqrt{-1}}}
\newcommand{\psho}{\mathcal{PSH}_0}
\newcommand{\ddc}{\sq\p\bar \p}
\newcommand{\lp}{\left(}
\newcommand{\rp}{\right)}
\newcommand{\lf}{\left[}
\newcommand{\rf}{\right]}
\newcommand{\ls}{\left\{}
\newcommand{\rs}{\right\}}
\newcommand{\lan}{\langle}
\newcommand{\ran}{\rangle}
\newcommand{\D}{\nabla}
\newcommand{\p}{\partial}
\newcommand{\eps}{\varepsilon}
\newcommand{\lam}{\lambda}
\newcommand{\Om}{\Omega}
\newtheorem{prop}{Proposition}[section]
\newtheorem{theo}[prop]{Theorem}
\newtheorem{lem}[prop]{Lemma}
\newtheorem{cor}[prop]{Corollary}
\newtheorem{rem}[prop]{Remark}
\begin{document}
\baselineskip=16.4pt
\parskip=3pt

\title{\large S\MakeLowercase{obolev} \MakeLowercase{inequalities and regularity of the linearized 
complex} M\MakeLowercase{onge}-A\MakeLowercase{mp\`ere}  \MakeLowercase{and} H\MakeLowercase{essian equations}}
\author{J\MakeLowercase{iaxiang} W\MakeLowercase{ang} \MakeLowercase{and} B\MakeLowercase{in} $\text{Z\MakeLowercase{hou}}^{1}$}

\begin{abstract} Let $u$ be a smooth, strictly $k$-plurisubharmonic function on a bounded domain $\Omega\in\mathbb C^n$ with $2\leq k\leq n$. 
The purpose of this paper is to study the regularity of solution to the linearized complex Monge-Amp\`ere and Hessian equations when the complex $k$-Hessian $H_k[u]$ of $u$ is bounded from above and below. We first  establish some estimates of Green's functions associated to the linearized equations. Then we prove a class of new Sobolev inequalities. 
With these inequalities, we use Moser's iteration to investigate the a priori estimates of Hessian equations and their linearized equations, as well as the K\"ahler scalar curvature equation. In particular, we obtain the Harnack inequality for the linearized complex Monge-Amp\`ere and Hessian equations under an extra integrability condition on the coefficients. 
The approach works in both real and complex case.

\end{abstract}

\address{Jiaxiang Wang: 
School of Mathematical Sciences and LPMC, Nankai University, Tianjin, 300071, P. R. China}
\email{wangjx\underline{ }manifold@126.com}

\address{Bin Zhou:
School of Mathematical Sciences, Peking
University, Beijing 100871, China.} 
\email{bzhou@pku.edu.cn}

\thanks {$1$ Partially supported by National Key R$\&$D Program of China SQ2020YFA0712800 and NSFC  Grant 11822101.}

\subjclass[2000]{Primary: 32W20; Secondary: 35J60.}

\keywords{Complex Monge-Amp\`ere equation; Hessian equation; linearized Monge-Amp\`ere equation; Sobolev inequality.}

\maketitle

\section{Introduction}

In a seminar work \cite{CG}, Caffarelli-Gutierrez established a fundamental regularity theory of the
linearized Monge-Amp\`ere equation
\beq\label{eq}
L_uv:=U^{ij}v_{i j}=f(x),
\eeq
with coefficient matrix $(U^{ij})$  given as the cofactor
matrix of  the Hessian $D^2u$ of a smooth, strictly convex function  $u$, which is analogous to the De Giorgi-Nash-Moser theory of uniformly elliptic divergence form linear equations 
and the Krylov-Safonov theory of general form linear equations. Precisely, they obtained the Harnack and H\"older estimates
under the assumption that  the Monge-Amp\`ere measure $\mu_u$ of $u$ satisfies the $(\mu_\infty)$ condition \cite{CG}; see \cite{M1} for the extensions.  In particular,
the $(\mu_\infty)$ condition is satisfied when there exists two constants $\lambda, \Lambda>0$, such that
\beq\label{det-c}
0<\lambda\leq {\det} (u_{i j})\leq \Lambda.
\eeq
Caffarelli-Gutierrez's theory plays an important role in many geometric problems \cite{TW1, TW2, D1, D2, D3}.
In this paper, we study the regularity for the linearized equations for a class of equations, including the complex Monge-Amp\`ere equation and Hessian equations in both real and complex cases.

Let  $\Omega\in \mathbb C^n$ be a bounded domain and $u$ a smooth function defined on $\Omega$.
The $k$-complex Hessian operator is defined by 
\beq\label{k-c-hess}
H_k[u]=\sum_{1\leq j_1<\cdots<\lambda_k\leq n}\lambda_{j_1}\cdots\lambda_{j_k}, \ k=1, \cdots, n,
\eeq
where $\lambda_1,\cdots,\lambda_n$ are the eigenvalues of the complex Hessian $(u_{i\bar j})=\left(\frac{\partial^2u}{\partial z_i\partial\bar z_j}\right)$.
Using $d=\partial+\bar\partial$ and $d^c=\sqrt{-1}(\bar\partial-\partial)$, one gets
\[(dd^cu)^k\wedge \omega_0^{n-k}=4^nk!(n-k)!H_k[u]\,d\mu_0,\]
where $\omega_0=dd^c(|z|^2)$ is the fundamental K\"ahler form and $d\mu_0$ is the volume form.
$u$ is called {\it(strictly) $k$-plurisubharmonic}  if $H_l[u]\geq 0(>0)$, $l=1, 2, \cdots, k$.

Denote $H^{i\bar j}_k=\frac{\partial H_k[u]}{\partial u_{i\bar j}}$. For a $k$-plurisubharmonic function, the {\it linearized complex $k$-Hessian equation}
is 
\beq\label{l-k-c-hess}
\mathcal L_{u,k}v:=H^{i\bar j}_k v_{i\bar j}=f(z).
\eeq
In particular, when $k=n$, $H_n$ is the complex Monge-Amp\`ere operator and $\mathcal L_{u,n}$ is the linearized complex Monge-Amp\`ere 
equation
\beq\label{l-cma}
\mathcal L_{u,n}v:=U^{i\bar j} v_{i\bar j}=f(z),
\eeq
where $(U^{i\bar j})$ is the cofactor matrix of $(u_{i\bar j})$, or equivalently,
\beq\label{l-cma1}
\det(u_{i\bar j})\triangle_\omega v=f(z),
\eeq
where $\triangle_\omega$ is the Laplacian with respect to the K\"ahler metric $\omega=dd^cu$. This equation 
has many  potential applications in the regularity of complex Monge-Amp\'ere equation and  K\"ahler geometry. 

Caffarelli-Gutierrez's theory for the linearized real Monge-Amp\`ere equation relies on the convexity and real analysis related to the Monge-Amp\`ere equation, 
which do not hold for the Hessian equations or in the complex setting. 
Recently, an interior H\"older estimate for the linearized complex Monge-Amp\`ere equation was obtained by \cite{X}
under the assumption that the domain $\Omega$ is close to a ball, $u$ vanishes on $\partial\Omega$ and the determinant $\det u_{i\bar j}$ is close to a constant.
Except this result, very little is known for the linearized complex Monge-Amp\`ere and Hessian
equations. 

By the divergence free property of $H_k$, i.e., $\sum_{j=1}^n D_{\bar j}H_k^{i\bar j}=0$, we can rewrite \eqref{l-k-c-hess} in divergence form
\beq\label{com-div-1}
D_{\bar j}(H_k^{i\bar j}D_i v)=f(z).
\eeq
Hence, it is natural to investigate \eqref{l-k-c-hess} by integral methods. 


The key tool used later in the study of regularity is a class of new Sobolev inequalities associated to the linearized operators which is of interest itself. For the real Monge-Amp\`ere equation, such an inequality was obtained by \cite{TW} when $n\geq 3$ and  \cite{L2} when $n=2$. In fact,
when the Monge-Amp\`ere measure $\mu_u$ satisfies the $(\mu_\infty)$ condition in \cite{CG} and the sections of $u$ satisfy certain size conditions, it was shown that the following
inequality holds
\begin{align}\label{r-so}
\|v\|_{L^p(\Omega,\mu_u)}\le C\cdot \lp \int_{\Om}U^{i j}v_i v_{j}\,dx\rp^{\frac{1}{2}}, \ v\in C^{1}_0(\Omega),
\end{align}
where $p\geq 1$ depends on $n$, $\Om$ and $u$. In particular, \eqref{r-so} holds for $p=\frac{2n}{n-2}$ when \eqref{det-c} is satisfied.
In \cite{M2}, it was proved that
\eqref{r-so} holds for $p=\frac{2n}{n-1}$ when  $\mu_u$ satisfies the doubling property.
One of the main ingredients of the proof is 
a power-like decay estimate of the distribution function of the Green's function of the linearized Monge-Amp\`ere equation. 
The proofs in these references rely  on the Harnack inequality of solutions to the linearized Monge-Amp\`ere equation.
However, in the Hessian case or the complex setting, 
the regularity of the linearized equations is what we are seeking to.
New techniques are needed to investigate this problem. We will use Chen-Cheng's method of comparison with auxiliary complex Monge-Amp\`ere equation in the study of 
constant scalar K\"ahler metrics \cite{CC}
to estimate the Green's function.  

Properties of Green's functions for uniformly elliptic operators have been studied extensively \cite{GW, LSW}. Green's function for the linearized real Monge-Amp\`ere equation was studied by in \cite{TW, L1}. In the complex case, an $L^{\frac{n}{n-1}-\epsilon}$-estimate of Green's functions on K\"ahler manifolds were obtained in \cite{GPS}. 
We are concerned with the local case and the critical exponent.
By \cite{GW, LSW}, there exists a Green's function $G_\Omega$ associated to $\mathcal L_{u,k}$ in $\Om$, which satisfies the following Dirichlet problem for any fixed $w\in \Om$:
\begin{align}\label{Green}
\begin{cases}
-H_k^{i\bar j}\lf G_{\Om}(z,w)\rf_{i\bar j}=\delta_w,\ \ \ \ & z\in \Om,  \\[4pt]
G_{\Om}(z,w)=0,   & z\in \p \Om.
\end{cases}
\end{align}
Our first result is 
\begin{theo}[Estimates of Green's functions]\label{main1}
Assume $u$ is a smooth, strictly $k$-plurisubharmonic function$(2\leq k\leq n)$ and $\mu$ is the associated $k$-Hessian measure.
\begin{itemize}
\item[(i)] Suppose $H_k[u]\leq \Lambda$ for $\Lambda>0$. There exists $C>0$ depending only on $k$, $n$, $\Lambda$ and $\Omega$ such that for any $z\in\Omega$, 
\begin{align}\label{w-lp}
\|G(z,\cdot)\|_{L^{\frac{n}{n-1}}_*(\Omega,\,d\mu)}:=\sup_{t>0}\{t\cdot\mu_u( \ls w\in \Omega\,\big|\,G_{\Omega}(z,w)>t\rs )^{\frac{n-1}{n}}\}\le C.
\end{align}

\item[(ii)]  Let $\Lambda_q=\|H_k[u]\|_{L^{q}(\Omega)}$ for some $q>1$. Then for any $p<\frac{n}{n-1}$,  there exists a uniform constant $C>0$ depending only on $k$, $n$, $p$, $\Omega$ and $\Lambda_q$ such that for any $z\in\Omega$,
\begin{align}\label{lp-green}
\|G(z,\cdot)\|_{L^{p}(\Omega,\,d\mu)}:=\left(\int_{\Om}G_{\Om}(z,\cdot)^pH_k[u]\,\omega_0^n\right)^{\frac{1}{p}}\le C. 
\end{align}
\end{itemize}
\end{theo}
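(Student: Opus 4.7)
My plan is to bound the $\mu$-distribution function $h(t) := \mu(\{G_\Omega(z,\cdot) > t\})$ by comparing $G_\Omega$ against an auxiliary complex Monge-Amp\`ere potential, in the spirit of Chen-Cheng's strategy from \cite{CC}. For fixed $z \in \Omega$ and $t > 0$, write $\Omega_t := \{w : G_\Omega(z,w) > t\}$ and let $v\le 0$ be the plurisubharmonic Bedford-Taylor solution of
\begin{equation*}
\det(v_{i\bar j}) = H_k[u]^{n/k}\ \ \text{on}\ \Omega_t,\qquad v\big|_{\partial\Omega_t} = 0.
\end{equation*}
The exponent $n/k$ is forced by matching powers below. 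Existence of $v$ is standard by approximating $\Omega_t$ with smooth strictly pseudoconvex subdomains ($\partial\Omega_t$ is smooth for a.e.\ $t$ by Sard's theorem) and invoking monotone stability of Bedford-Taylor masses.

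Two estimates on $v$ drive the argument. First, G\aa rding's inequality for Hermitian matrices in the $\Gamma_k$ cone, combined with Maclaurin's inequality $H_k[v]^{1/k} \ge \binom{n}{k}^{1/k}\det(v_{i\bar j})^{1/n}$ (valid since $v$ is psh), gives
\begin{equation*}
\mathcal L_{u,k}v = H_k^{i\bar j}v_{i\bar j} \,\ge\, k\,H_k[u]^{(k-1)/k}H_k[v]^{1/k} \,\ge\, c_{n,k}\,H_k[u]^{(k-1)/k}\det(v_{i\bar j})^{1/n} = c_{n,k}\,H_k[u].
\end{equation*}
Second, B\l ocki's $L^\infty$ estimate for complex Monge-Amp\`ere, combined with $H_k[u]\le\Lambda$, yields
\begin{equation*}
\|v\|_{L^\infty(\Omega_t)}^n \,\le\, C(n,\mathrm{diam}\,\Omega)\!\int_{\Omega_t}\!\det(v_{i\bar j})\,\omega_0^n \,\le\, C\,\Lambda^{(n-k)/k}\,h(t).
\end{equation*}

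Next I would use $v$ as a test function against $\tilde G := G_\Omega(z,\cdot) - t$, which vanishes on $\partial\Omega_t$. Exploiting the divergence-free identity $\sum_j D_{\bar j}H_k^{i\bar j}=0$ (which makes $\mathcal L_{u,k}$ formally self-adjoint), integration by parts on $\Omega_t$ gives
\begin{equation*}
-v(z) = \int_{\Omega_t}\!v\,\mathcal L_{u,k}\tilde G\,\omega_0^n = \int_{\Omega_t}\!\tilde G\,\mathcal L_{u,k}v\,\omega_0^n \,\ge\, c_{n,k}\!\int_{\Omega_t}\!(G-t)\,d\mu \,\ge\, c_{n,k}\,s\,h(t+s)
\end{equation*}
for every $s>0$. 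Combined with the $L^\infty$ bound on $v$, this produces the key recursion $s\,h(t+s) \le C_1 h(t)^{1/n}$. Setting $s=t$ and iterating backward through $\tau,\tau/2,\tau/4,\dots$, the accumulated exponents form the geometric series $\sum_{j\ge 0} n^{-j} = n/(n-1)$, which delivers in the limit $h(\tau) \le C_2\,\tau^{-n/(n-1)}$ (using $\mu(\Omega) < \infty$); this is precisely the weak $L^{n/(n-1)}$ bound (i). For (ii), I would replace B\l ocki's estimate by a Kolodziej-type bound $\|v\|_\infty \le C\|\det(v_{i\bar j})\|_{L^{p_0}}^{1/n}$ (with $p_0>1$ chosen depending on $q$) and H\"older-interpolate against $\Lambda_q$ to obtain a slightly weaker recursion $s\,h(t+s) \le C\,h(t)^\alpha$ with $\alpha < 1/n$ arbitrarily close to $1/n$; combined with the layer-cake identity $\int G^p\,d\mu = p\int_0^\infty t^{p-1}h(t)\,dt$ and the finiteness $\mu(\Omega)\le\Lambda_q|\Omega|^{(q-1)/q}$, this delivers the stated $L^p$ bound for any prescribed $p < n/(n-1)$.

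The principal technical obstacle is justifying the integration by parts in the presence of the Dirac source $\delta_z$ on a possibly irregular sublevel set $\Omega_t$. This is handled in a standard fashion: excise a small ball $B_\epsilon(z)$ around the pole and pass $\epsilon\to 0$ using local weak integrability of $G$ and its first derivatives near $z$, and handle $\partial\Omega_t$ via smooth pseudoconvex approximations together with the monotone stability of Bedford-Taylor operators and B\l ocki's estimate under domain limits. A secondary subtle point is verifying that G\aa rding's inequality applies with the right constants when $v$ is only a weak (continuous) psh solution, which is done by regularizing $v$ via standard sup-convolution.
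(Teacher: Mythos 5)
Your overall strategy — compare $G_\Omega$ against an auxiliary complex Monge--Amp\`ere potential, use a G\aa rding/Maclaurin inequality to turn that potential into a supersolution of $\mathcal L_{u,k}$, and then iterate a decay recursion on the distribution function — is the right one and is indeed close in spirit to the paper's. However, the step you invoke as ``B\l ocki's $L^\infty$ estimate,''
\[
\|v\|_{L^\infty(\Omega_t)}^n \le C(n,\operatorname{diam}\Omega)\int_{\Omega_t}\det(v_{i\bar j})\,\omega_0^n,
\]
is false for the complex Monge--Amp\`ere operator, and this is the crux of the whole theorem. Unlike the real case, where Alexandrov--Bakelman--Pucci does give $\sup(-v)\le C\,\big(\int\det D^2v\big)^{1/n}$ for a convex $v$ vanishing on the boundary, the complex operator only sees $n^2$ of the $(2n)^2$ second derivatives and a pure $L^1$ bound on $(dd^cv)^n$ does not control $\|v\|_\infty$: on the unit ball the regularization $v_\eps=\tfrac12\log(|z|^2+\eps^2)-\tfrac12\log(1+\eps^2)$ has $(dd^cv_\eps)^n$ of uniformly bounded total mass as $\eps\to 0$, yet $\sup(-v_\eps)\to\infty$. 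Ko\l odziej's $L^\infty$ estimate requires $\det(v_{i\bar j})\in L^{p_0}$ with $p_0>1$; the exponent $p_0=1$ is exactly the borderline case. Plugging a subcritical Ko\l odziej bound into your recursion $s\,h(t+s)\le C\,h(t)^\alpha$ only gives $\alpha<1/n$, which after iteration misses the critical weak-$L^{n/(n-1)}$ exponent in part (i) — that exponent is precisely what makes the theorem delicate.

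The paper gets around this by exploiting that the source of the auxiliary equation, $\chi_{K_t}H_k[u]^{n/k}$, has \emph{bounded density} (since $H_k[u]\le\Lambda$), and by replacing the missing $L^1\to L^\infty$ bound with the Moser--Trudinger inequality for complex Monge--Amp\`ere (Theorem \ref{MT}) plus an Orlicz-type Young inequality \eqref{Young} and the iteration scheme of \cite{WWZ2}. This produces the genuine endpoint estimate $\|\phi\|_{L^\infty}\le C\,\mu_u(K_t)^{1/n}$ for this specific class of right-hand sides, which is what the recursion needs. For part (ii) your proposal to interpolate via Ko\l odziej's subcritical estimate is closer to being viable, but even there the paper prefers the same Moser--Trudinger route in Orlicz form to keep the dependence on $\Lambda_q$ clean; a subsidiary remark is that the paper poses the auxiliary problems on all of $\Omega$ with compactly supported source (rather than on the possibly irregular sublevel set $\Omega_t$), which sidesteps the boundary-regularity issues you flag as a ``secondary'' concern. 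To repair your argument, replace the claimed $L^1$-type bound by the Moser--Trudinger mechanism; once that is done the rest of your iteration works as intended.
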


In the following, we will always  assume 
$u$ satisfies
\begin{equation}\label{low-upp}
0<\lambda\leq H_k[u]\leq \Lambda.
\end{equation}  We obtain the following new Sobolev inequality associated to $\mathcal L_{u,k}$.



\begin{theo}[Sobolev inequality]\label{Sobolev}
Assume $u$ is a smooth, strictly $k$-plurisubharmonic function$(2\leq k\leq n)$satisfying \eqref{low-upp}. Then for any 
$2\le p\le  \frac{2n}{n-1}$. There exists a constant $C$ depending on $k$, $n$, $\lambda$, $\Lambda$, $\Omega$ and $p$ such that for any $v\in C^{2}_0(\Om)$, we have
\beq\label{sob-ineq}
\|v\|_{L^p(\Omega,\mu_u)}\le C\cdot \lp \int_{\Om}H_k^{i\bar j}v_i\bar v_{j}\,\omega_0^n\rp^{\frac{1}{2}}=C\cdot\lp \int_{\Om}\partial v\wedge\bar\partial v\wedge (dd^cu)^{k-1}\wedge \omega_0^{n-k}\rp^{\frac{1}{2}}.
\eeq
\end{theo}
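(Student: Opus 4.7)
The plan is to deduce the critical-exponent case $p=\frac{2n}{n-1}$ from the weak-$L^{n/(n-1)}$ Green's function estimate of Theorem~\ref{main1}(i) via a Hardy--Littlewood--Sobolev-type bound on the Green's potential, combined with a duality/Cauchy--Schwarz argument on the energy form
\[
E(v,\phi):=\int_{\Omega}H_k^{i\bar j}v_i\,\overline{\phi_j}\,\omega_0^n.
\]
The remaining exponents $2\le p<\frac{2n}{n-1}$ will then follow by H\"older's inequality on the finite-measure set $\Omega$. A preliminary observation is that $\lambda\le H_k[u]\le\Lambda$ makes $d\mu_u$ comparable to Lebesgue measure $\omega_0^n$ on $\Omega$, and that the divergence-free property $D_{\bar j}H_k^{i\bar j}=0$ makes $\mathcal L_{u,k}$ formally self-adjoint with respect to $\omega_0^n$, so $G_\Omega(z,w)=G_\Omega(w,z)$. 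Thus Theorem~\ref{main1}(i) supplies the uniform two-sided kernel bound
\[
\sup_{z}\|G_\Omega(z,\cdot)\|_{L^{n/(n-1)}_*(d\mu_u)}+\sup_{w}\|G_\Omega(\cdot,w)\|_{L^{n/(n-1)}_*(d\mu_u)}\le C.
\]

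The first step is to prove that the Green's potential $Tg(z):=\int_\Omega G_\Omega(z,w)g(w)\,d\mu_u(w)$ satisfies $\|Tg\|_{L^p(d\mu_u)}\le C\|g\|_{L^{p'}(d\mu_u)}$ with $p=\frac{2n}{n-1}$ and $p'=\frac{2n}{n+1}$. This is a Marcinkiewicz/HLS argument: decompose $g$ by its super-level sets and bound the distribution function of $Tg$ using the two-sided weak-$L^{n/(n-1)}$ kernel estimate. The relation $\frac{1}{p'}-\frac{1}{p}=\frac{1}{n}$ is precisely the Sobolev scaling for a fractional integral of order $2$ on a space of real dimension $2n$, which matches the exponent $\frac{n}{n-1}=\frac{2n}{2n-2}$ in Theorem~\ref{main1}(i).

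The second step is the duality argument. Since $\|v\|_{L^p(d\mu_u)}=\sup_{\|g\|_{L^{p'}(d\mu_u)}\le 1}\int_\Omega vg\,d\mu_u$, it suffices to bound the pairing for real $g$ of unit $L^{p'}$-norm. Setting $\phi:=Tg$, so that $-\mathcal L_{u,k}\phi=gH_k[u]$ with $\phi|_{\partial\Omega}=0$, two integrations by parts (using $D_{\bar j}H_k^{i\bar j}=0$) give
\[
\int_\Omega vg\,d\mu_u=-\int_\Omega v\,\mathcal L_{u,k}\phi\,\omega_0^n=E(v,\phi),\qquad E(\phi,\phi)=\int_\Omega \phi g\,d\mu_u.
\]
Cauchy--Schwarz for the Hermitian positive form $E$ gives $|E(v,\phi)|^2\le E(v,v)E(\phi,\phi)$, while H\"older and Step~1 give
\[
E(\phi,\phi)\le \|\phi\|_{L^p(d\mu_u)}\|g\|_{L^{p'}(d\mu_u)}\le C\|g\|_{L^{p'}(d\mu_u)}^{2}.
\]
Taking the supremum over $g$ yields $\|v\|_{L^p(d\mu_u)}\le C\,E(v,v)^{1/2}$ at the critical exponent; H\"older's inequality (using $\mu_u(\Omega)<\infty$) gives the full range $2\le p\le \frac{2n}{n-1}$.

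The main obstacle will be Step~1: upgrading the weak-type kernel estimate of Theorem~\ref{main1}(i) to a strong-type $L^{p'}\to L^p$ bound for $T$. This is made possible precisely because $\lambda\le H_k[u]\le \Lambda$ renders $(\Omega,d\mu_u)$ quantitatively comparable to Euclidean space of real dimension $2n$, so that the weak exponent $\frac{n}{n-1}$ appearing in Theorem~\ref{main1} is exactly the one producing a Sobolev gain of $\frac{1}{n}$ in the integrability; without the lower bound $\lambda\le H_k[u]$ the measure-geometric setup degenerates and this step would fail.
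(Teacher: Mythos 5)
Your proposal is correct, but it follows a genuinely different route from the paper's proof. The paper runs a Tian--Wang-style variational argument: it minimizes the energy $E(v,v)$ subject to a truncated normalization $\int_\Omega F(v)\,d\mu_u=1$, reads off an Euler--Lagrange equation with Lagrange multiplier $\hat\lambda$, turns the decay estimate \eqref{w-lp} into a first-eigenvalue bound $\lambda_1(U)\,\mu_u(U)^{1-2/p}\geq c^*>0$ (Step~1), runs a level-set iteration for the minimizer (Step~2), and then bounds $s^*$ from below (Step~3). You instead take the classical potential-theoretic route: symmetry of $G_\Omega$ (from $D_{\bar j}H_k^{i\bar j}=0$) upgrades the one-sided weak $L^{n/(n-1)}$ kernel bound of Theorem~\ref{main1}(i) to a two-sided one, the generalized Young/O'Neil inequality then yields $T\colon L^{2n/(n+1)}(d\mu_u)\to L^{2n/(n-1)}(d\mu_u)$, and the identity $\int_\Omega vg\,d\mu_u=E(v,Tg)$ combined with Cauchy--Schwarz for the Hermitian form $E$ closes the argument. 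Both proofs rest entirely on Theorem~\ref{main1}(i). Yours is shorter and more conceptually transparent; what you give up is self-containedness --- in a full write-up Step~1 would need to be carried out (Marcinkiewicz interpolation between the endpoints $L^1\to L^{n/(n-1),\infty}$ and the Lorentz-dual bound $L^{n,1}\to L^\infty$), and the integrations by parts $E(v,\phi)=\int vg\,d\mu_u$, $E(\phi,\phi)=\int\phi g\,d\mu_u$ should be justified for the limiting regularity $\phi=Tg\in W^{2,2n/(n+1)}\hookrightarrow W^{1,2}_0$. One correction to your closing remark: your argument never actually uses the lower bound $\lambda\le H_k[u]$. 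The O'Neil step is purely measure-theoretic (no Euclidean comparability is invoked), Theorem~\ref{main1}(i) requires only the upper bound, and the duality step is algebraic. So far from ``failing'' without the lower bound, your route gives the conclusion under $H_k[u]\le\Lambda$ alone --- which is consistent with, and formally sharper than, the paper's Remark after the proof, where the authors weaken the lower bound to a doubling hypothesis on $\mu_u$ because their level-set iteration (display \eqref{db-arg}) still needs it.
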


Since our proof does not need the regularity of the linearized equations, it also provides a new proof to \eqref{r-so} in the real case in \cite{TW} without using \cite{CG}.  
It can be used to derive the analogous Sobolev inequalities associated to linearized real  $k$-Hessian equations(see Section \ref{sec-hes}). 

A first application of the new Sobolev inequality is the local regularity of the complex Monge-Amp\`ere and $k$-Hessian equations.
The local estimates of  $W^{2,p}$ solutions to  
\beq\label{k-hess-c}
H_k[u]=f
\eeq 
were obtained in \cite{BD, DK}. The proofs in \cite{BD, DK} use Alexandrov-Bakeman-Pucci's maximum principle. By using the Sobolev inequalities above, we give a new proof under weaker regularity of $f$.
\begin{theo}[Interior estimate of Hessian equations]\label{main2}
Assume $n\geq 2$ and $p> n(k-1)$. Suppose $\Omega$ is a domain in $\mathbb C^n$. 
Suppose $u\in W^{2,p}(\Omega)$ is a solution to \eqref{k-hess-c} with $0<\lambda\leq f\leq \Lambda$. Then for $\Omega'\Subset\Omega$, we have 
\begin{align}
\sup_{\Omega'}|\triangle u|\le C,
\end{align}
where $C$ depends on 
$k$, $n$, $p$, $\lambda$, $\Lambda$, $\|f\|_{C^1(\Omega)}$, $\|u\|_{W^{2,p}(\Omega)}$ and $dist(\Omega', \p\Omega)$.
\end{theo}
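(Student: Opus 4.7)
The plan is to carry out a Moser iteration on $w := n+\Delta u$, which is nonnegative by $k$-plurisubharmonicity, using the Sobolev inequality of Theorem~\ref{Sobolev} in place of the classical Euclidean one; the resulting $L^\infty$ bound on $w$ is equivalent to the desired estimate on $|\Delta u|$.

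\textbf{Step 1: Differential inequality for $w$.} Differentiating $H_k[u]=f$ twice and summing over directions yields
\[
\mathcal L_{u,k}(\Delta u) \;=\; \Delta f \;-\; \sum_\gamma H_k^{i\bar j,r\bar s}[u]\,u_{i\bar j\bar\gamma}\,u_{r\bar s\gamma}.
\]
The concavity of $H_k^{1/k}$, together with standard manipulations that accommodate the non-Hermiticity of the tensor $u_{i\bar j\gamma}$, reduces this to a pointwise inequality $\mathcal L_{u,k}w\ge -\psi$ in which $\psi$ is dominated by an expression of the form $C(1+|D^2u|^{k-1})(1+|\nabla f|)$. Because only $\|f\|_{C^1}$ is available, I would work with a single $\partial$-derivative of the equation rather than a full second derivative, so no $\Delta f$ term appears on the right. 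The divergence-free identity $\partial_{\bar j}H_k^{i\bar j}=0$ recalled in the introduction allows this inequality to be recast in divergence form:
\[
-\,\partial_{\bar j}\bigl(H_k^{i\bar j}\partial_i w\bigr)\;\le\;\psi.
\]

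\textbf{Step 2: Moser iteration via Theorem~\ref{Sobolev}.} Testing against $\eta^2 w^\alpha$ for $\alpha\ge 0$ and a cutoff $\eta\in C_0^\infty(\Omega)$, integration by parts and Cauchy-Schwarz yield
\[
\int_\Omega H_k^{i\bar j}\,\partial_i\!\bigl(\eta w^{(\alpha+1)/2}\bigr)\,\partial_{\bar j}\!\bigl(\eta w^{(\alpha+1)/2}\bigr)\,\omega_0^n \;\le\; C(\alpha+1)\!\int_\Omega\!\bigl(|\nabla\eta|^2+\eta^2\psi\bigr)w^{\alpha+1}\omega_0^n.
\]
Applying Theorem~\ref{Sobolev} with exponent $\tfrac{2n}{n-1}$ to the function $\eta w^{(\alpha+1)/2}$, and using $d\mu_u\ge \lambda\,\omega_0^n$ from \eqref{low-upp}, converts the gradient integral into an $L^{\frac{n(\alpha+1)}{n-1}}$ norm of $w$. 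Splitting the $\psi$-term via Hölder with the dual exponents determined by $\psi\in L^q$ produces a reverse-Hölder-type inequality in the exponent; a standard geometric iteration $\alpha_j\to\infty$ with shrinking cutoffs then yields
\[
\sup_{\Omega'}w \;\le\; C\,\|w\|_{L^{q_0}(\Omega)}
\]
for some starting $q_0>1$, and the hypothesis $u\in W^{2,p}(\Omega)$ controls the right-hand side.

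\textbf{The main obstacle} is closing the iteration with the Sobolev exponent that is actually available. Theorem~\ref{Sobolev} only gives $\tfrac{2n}{n-1}$, strictly below the Euclidean $\tfrac{2n}{n-2}$, so the Moser scheme requires $\psi\in L^q$ with $q>n$ rather than the usual $q>n/2$. This is precisely where the hypothesis $p>n(k-1)$ is used: since $H_k^{i\bar j}[u]$ is a polynomial of degree $k-1$ in the entries of $u_{i\bar j}$, the assumption $u\in W^{2,p}$ forces $|D^2u|^{k-1}\in L^{p/(k-1)}$ with $p/(k-1)>n$, and combining with $|\nabla f|\in L^\infty$ places $\psi$ in $L^q$ for some $q>n$. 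With this integrability the iteration closes, and the interior bound $\sup_{\Omega'}|\Delta u|\le C$ follows, with $C$ depending on the stated quantities.
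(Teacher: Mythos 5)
Your overall strategy -- Moser iteration on $\triangle u$ driven by the Sobolev inequality of Theorem~\ref{Sobolev} -- is exactly the paper's approach, and you correctly identify that the subcritical Sobolev exponent $\tfrac{2n}{n-1}$ is why the hypothesis $p>n(k-1)$ appears. However, there are two substantive misreadings of the mechanics.

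\textbf{(1) The differential inequality.} After differentiating $H_k[u]=f$ twice and invoking concavity of $\log H_k$, the paper obtains
$H_k^{i\bar j}(\triangle u)_{i\bar j}\ge \triangle f-\frac{|\nabla f|^2}{f}$. The right-hand side carries no factor of $|D^2u|^{k-1}$; it is controlled by $\|f\|_{C^1}$ after integrating the $\triangle f$ term by parts in the weak formulation. Your claim that $\psi$ is dominated by $C(1+|D^2u|^{k-1})(1+|\nabla f|)$ misplaces where the $(\triangle u)^{k-1}$ factor enters, and the proposed alternative of ``taking a single $\partial$-derivative'' does not give an equation for $\triangle u$ at all -- differentiating once yields $H_k^{i\bar j}u_{i\bar j\gamma}=f_\gamma$, an equation for $u_\gamma$, not for $\triangle u$. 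The correct way to use only $f\in C^1$ is what the paper does: differentiate twice, then integrate the $\triangle f$ term by parts.

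\textbf{(2) Where $p>n(k-1)$ actually enters.} The $(\triangle u)^{k-1}$ factor does not come from the inhomogeneity but from the \emph{coefficient bound}: the cutoff term in the Caccioppoli inequality is $\int H_k^{i\bar j}D_i\eta D_{\bar j}\eta\, v^{\beta+2}$, and since $H_k^{i\bar j}D_i\eta D_{\bar j}\eta\le\mathcal U|D\eta|^2\le C v^{k-1}|D\eta|^2$, this upgrades $v^{\beta+2}$ to $v^{\beta+1+k}$ on the right of the reverse-H\"older step. Consequently the iteration is \emph{shifted}: with $\gamma=\tfrac{n}{n-1}$, one has $\alpha_j=\gamma\alpha_{j-1}-(k-1)$, which tends to $+\infty$ only when the starting exponent satisfies $\alpha_0>(k-1)(n-1)$, i.e.\ $p=\alpha_0+k-1>(k-1)n$. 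Treating $(\triangle u)^{k-1}$ as a fixed $L^q$ inhomogeneity with $q>n$ and then running a standard iteration ``from some $q_0>1$'' is not coherent here, because this ``inhomogeneity'' depends on the very quantity $v=\triangle u$ you are bounding, and a standard Moser scheme would not close. The shifted iteration and the explicit lower bound on the starting exponent are the essential ingredients your sketch is missing.
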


Now we turn to the regularity of \eqref{l-k-c-hess}.
Unlike the linearized real Monge-Amp\`ere equation, we need some extra conditions on the coefficients $H_k^{i\bar j}$.
For convenience, we denote $\mathcal U=\sum_{i=1}^nH_k^{i\bar i}$. For $q>0$, $R>0$, and $B_R(z)\subset \Omega$, we denote 
$$\Theta_q(B_R(z))=\lp\frac{1}{|B_R(z)|}\int_{B_R(z)}\mathcal U^q\,d\mu_0\rp^{\frac{1}{q}}.$$

\begin{theo}[Harnack inequality]\label{thm-har3}
Assume $u$ is a smooth, strictly $k$-plurisubharmonic function($2\leq k\leq n$) that satisfies 
\eqref{low-upp}.
Assume $ f(z)\in L^q(B_{R_0}(z_0))$ with $q>n$.
Let $v\in W^{1,2}(B_{R_0}(z_0))$ be a nonnegative solution to \eqref{com-div-1} in $B_{R_0}(z_0)\subset\Omega$. Then there holds for any $n<q_0$ and $R<R_0$
\beq\label{cma-har}
\max_{B_R(z_0)} v\leq C\frac{\Theta_{q_0}^{\frac{\delta}{4}}(B_{R_0}(z_0))}{(R_0-R)^n}\cdot\lp\min_{B_{R_0}(z_0)} v+{R_0}^{2-\frac{2n}{q}}\|f\|_{L^{q}(B_{R_0}(z_0))}\rp.
\eeq
where $\delta=\max\{\frac{q_0n}{q_0-n}, \frac{qn}{q-n}\}$ and $C$ depends only on $k$, $n$, $\lambda$, $\Lambda$, $\|f\|_{L^q(B_{R_0}(z_0))}$.
\end{theo}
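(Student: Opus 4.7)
The approach is a two-sided Moser iteration adapted to the divergence form \eqref{com-div-1}. I first reduce to the homogeneous case by replacing $v$ with $v_+:=v+\kappa$, where $\kappa=R_0^{2-2n/q}\|f\|_{L^q(B_{R_0}(z_0))}$, so that the inhomogeneous term can be absorbed into the zeroth-order part of $v_+$. The Harnack inequality then reduces to combining (A) a local $L^\infty$ bound $\sup_{B_R}v_+\le C\|v_+\|_{L^{p_0}(B_{R_0})}$ for nonnegative subsolutions, with (B) a weak Harnack bound $\|v_+\|_{L^{p_0}(B')}\le C\inf_{B_{R_0}}v_+$ for nonnegative supersolutions. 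Both halves are driven by Theorem \ref{Sobolev}: since \eqref{low-upp} makes $\mu_u$ comparable to $\omega_0^n$, Theorem \ref{Sobolev} at $p^{*}=2n/(n-1)$ provides a reverse-H\"older gain from $L^r$ to $L^{rn/(n-1)}$.

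\textbf{Subsolution half.} I test \eqref{com-div-1} against $v_+^{\beta}\eta^{2}$ for $\beta>0$ and a Lipschitz cutoff $\eta$, and integrate by parts. Cauchy-Schwarz on the positive-semidefinite bilinear form $H_k^{i\bar j}$ produces the energy inequality
$$\int_\Omega H_k^{i\bar j}\bigl(v_+^{(\beta+1)/2}\eta\bigr)_i\overline{\bigl(v_+^{(\beta+1)/2}\eta\bigr)_j}\,\omega_0^n\le C\int_\Omega H_k^{i\bar j}\eta_i\bar\eta_j\,v_+^{\beta+1}\,\omega_0^n+(\text{terms in }f).$$
The cutoff term is dominated pointwise by $|D\eta|^{2}\mathcal U$, and a H\"older split with exponents $(q_0,q_0/(q_0-1))$ converts it into a constant times $\Theta_{q_0}(B_{R_0}(z_0))\cdot\|v_+\|_{L^{(\beta+1)q_0/(q_0-1)}(B_{R_0})}^{\beta+1}$. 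Applying Theorem \ref{Sobolev} to the left-hand side gives the reverse-H\"older step
$$\|v_+\|_{L^{(\beta+1)n/(n-1)}(B_R)}^{\beta+1}\le\frac{C\,\Theta_{q_0}^{c}}{(R_0-R)^2}\,\|v_+\|_{L^{(\beta+1)q_0/(q_0-1)}(B_{R_0})}^{\beta+1}.$$
Because $q_0>n$, the gain ratio $\chi:=\tfrac{n(q_0-1)}{(n-1)q_0}>1$, so Moser's geometric iteration over shrinking balls converges and yields the subsolution sup estimate. The total exponent of $\Theta_{q_0}$, obtained by summing the geometric series of H\"older losses, is proportional to $q_0/(q_0-n)$. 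The $f$-contribution is absorbed along the same iteration using $q>n$ and the definition of $\kappa$, yielding the additive term $R_0^{2-2n/q}\|f\|_{L^q}$.

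\textbf{Supersolution half.} For the weak Harnack bound, I test against $\eta^{2}/v_+$, which produces the logarithmic Caccioppoli estimate
$$\int H_k^{i\bar j}(\log v_+)_i\overline{(\log v_+)_j}\,\eta^{2}\,\omega_0^n\le C\int\mathcal U\,|D\eta|^{2}\,\omega_0^n+(\text{terms in }f).$$
Pairing this with a Sobolev-Poincar\'e version of Theorem \ref{Sobolev} (derived by subtracting the mean and truncating) places $\log v_+$ in BMO relative to $\mu_u$. John-Nirenberg then supplies the exponential integrability $\int_{B_{R_0/2}}e^{p_0|\log v_+-c|}\,\omega_0^n\le C$ for some $p_0>0$. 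Running the same energy-plus-Sobolev scheme but with negative exponents, testing against $v_+^{-\beta}\eta^{2}$, bridges this exponential bound to $\|v_+\|_{L^{p_0}(B')}\le C\,\Theta_{q_0}^{c'}\inf_{B_{R_0}}v_+$. Composing this with the subsolution sup estimate gives \eqref{cma-har}; the total $\Theta_{q_0}$-exponent equals $\delta/4$ because the H\"older losses in the two iterations compound geometrically, and the worst Sobolev deficits come from $\mathcal U\in L^{q_0}$ and $f\in L^q$, giving $\delta=\max\{q_0n/(q_0-n),qn/(q-n)\}$.

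\textbf{Main obstacle.} The principal difficulty is the careful book-keeping of the $\Theta_{q_0}$ factor throughout both iterations. Unlike the classical uniformly elliptic theory, $H_k^{i\bar j}$ is controlled only through its trace $\mathcal U\in L^{q_0}$, so every transfer of a derivative onto a cutoff costs a factor $\Theta_{q_0}^{c}$; these factors compound through the geometric iteration and must be balanced against the Sobolev gain $p^{*}=2n/(n-1)$. The hypothesis $q_0>n$ is precisely what ensures $\chi>1$ in each step. A secondary delicate point is that the classical John-Nirenberg argument must be adapted to the weighted measure $\mu_u$: this requires deriving a Sobolev-Poincar\'e inequality from Theorem \ref{Sobolev} by a truncation argument, which itself incurs a further $\Theta_{q_0}$ factor that must be tracked to arrive at the final exponent $\delta/4$.
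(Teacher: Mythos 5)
Your overall decomposition (local maximum principle for subsolutions plus weak Harnack for supersolutions, both driven by Moser iteration off Theorem \ref{Sobolev}, then composed) matches the paper's strategy, which proves Theorem \ref{thm-har} and Theorem \ref{weak-har2} separately and combines them. The subsolution half as you describe it is essentially the paper's argument, including the truncation, the H\"older split against $(q_0, q_0/(q_0-1))$ to put the cutoff cost into $\Theta_{q_0}$, and the observation that $q_0>n$ makes the exponent gain $\chi=\tfrac{n(q_0-1)}{(n-1)q_0}>1$.

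The weak Harnack half, however, has a genuine gap. You propose to pass from the logarithmic Caccioppoli estimate to exponential integrability of $\log v_+$ via a BMO bound and John--Nirenberg. But John--Nirenberg requires a \emph{scale-invariant} mean-oscillation bound on all subballs $B_r\subset B_{R_0}$. Tracing the constants: the Caccioppoli estimate for $w=\log v_+$ on $B_r$ costs $\frac{1}{r^2}\int_{B_{2r}}\mathcal U$, and Lemma \ref{p-ineq} contributes $\|\mathcal U\|_{L^{n-1}(B_r)}^{(n-1)/2}$, so the oscillation bound on $B_r$ ends up proportional to $\Theta_{q_0}(B_{2r})^{n/2}$, a quantity which is \emph{not} uniformly controlled under the hypotheses of Theorem \ref{thm-har3} (the theorem only assumes $\mathcal U\in L^{q_0}(B_{R_0})$, giving a bound on $\Theta_{q_0}(B_{R_0})$ alone; the local averages $\Theta_{q_0}(B_{2r})$ can blow up as $r\to 0$, much like a maximal function). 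Indeed, the paper only imposes uniform boundedness of $\Theta_{q_0}(B_R(z))$ over all subballs in Theorem \ref{int-holder}, not in Theorem \ref{thm-har3}. Moreover, Lemma \ref{p-ineq} only delivers a $(\tfrac{2n}{2n-1},2)$-type Poincar\'e rather than the usual $(2,2)$-type, which the paper explicitly identifies as the source of extra work. The paper avoids John--Nirenberg entirely: after the Caccioppoli and Poincar\'e estimates give $\|w\|_{L^{2n/(2n-1)}(B_{\tau})}\le C$, the proof of Theorem \ref{weak-har2} runs a further Moser iteration with test functions $\zeta^2|w_m|^{2\beta}$ to establish the polynomial growth $\|w\|_{L^\beta(B_\tau)}\le C_*\beta$ for all $\beta$, with constants involving only $\|\mathcal U\|_{L^{q_0}}$ on a fixed ball, and then deduces $\int_{B_\tau}e^{p_0|w|}\le 2$ by Stirling's formula and the Taylor series. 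This side-steps exactly the scale-invariance issue your BMO/John--Nirenberg route would run into, and is the step that your outline is missing.
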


For the uniformly elliptic equations, we can usually establish the H\"older continuity of solutions from the Harnack inequality. Unfortunately, since the constant  in \eqref{cma-har} depends on $\Theta_{q_0}(B_{R_0}(z_0))$, we need a stronger assumption to get the H\"older continuity.

\begin{theo}[Interior H\"older estimate]\label{int-holder}
Assume $u$ is a smooth, strictly $k$-plurisubharmonic function($2\leq k\leq n$) that satisfies 
\eqref{low-upp}. Assume $f(z)\in L^q(\Omega)$ with $q>n$. Let $v\in W^{1,2}(\Omega)$ be a solution to \eqref{com-div-1}. Suppose $\Theta_{q_0}(B_R(z))$ is uniformly bounded for $B_R(z)\subset\Omega$ for some $q_0>n$.
Then $v\in C^\alpha(\Omega)$ for some 
$\alpha\in(0,1)$ depending on $k$, $n$, $q_0$, $q$, $\lambda$, $\Lambda$, $\|f\|_{L^q(\Omega)}$ and $\Theta_{q_0}=\sup_{B_R(z)\subset\Omega}{\Theta_{q_0}(B_R(z))}$. Moreover, there holds for any $B_R\subset\Omega$
\[|v(z)-v(w)|\leq C\lp\frac{|z-w|}{R}^\alpha\rp(\|v\|_{L^2(B_R)} +R^{2-\frac{2n}{q}}\|f\|_{L^{q}(B_R)})\]
for $z, w\in B_{R/2}$ where the constant $C$ depends on $k$, $n$, $q_0$, $q$, $\lambda$, $\Lambda$, $\|f\|_{L^q(\Omega)}$ and $\Theta$. 
\end{theo}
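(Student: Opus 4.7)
The strategy is to derive the interior H\"older estimate from the Harnack inequality of Theorem~\ref{thm-har3} via the classical De~Giorgi--Moser oscillation-decay argument. The uniform bound on $\Theta_{q_0}(B_R(z))$ is precisely the input that converts the Harnack estimate into an iterable form with constants independent of the ball.

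The first step is to extract from Theorem~\ref{thm-har3} a scale-invariant Harnack inequality of the form
\[
\sup_{B_r(z_0)}w\le C\bigl(\inf_{B_r(z_0)}w + r^\beta \|f\|_{L^q(B_{2r}(z_0))}\bigr),\qquad \beta:=2-\tfrac{2n}{q}>0,
\]
valid whenever $w\ge 0$ solves $\mathcal L_{u,k}w=\pm f$ on $B_{2r}(z_0)\Subset\Omega$, with $C$ depending only on the data and on the uniform bound for $\Theta_{q_0}$. I would obtain this by rescaling $B_{2r}(z_0)$ to the unit ball via $\tilde z=(z-z_0)/(2r)$ and $\tilde u(\tilde z)=(2r)^{-2}u(z_0+2r\tilde z)$, under which $H_k[\tilde u]\in[\lambda,\Lambda]$ and $\Theta_{q_0}$ is scale-invariant; applying Theorem~\ref{thm-har3} on the unit ball produces a bound with constant depending only on $\Theta_{q_0}$, after which the monotonicity $\inf_{B_{2r}}w\le\inf_{B_r}w$ lets us replace the larger infimum by the smaller one.

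Next I perform the oscillation decay. With $M(r)=\sup_{B_r(z_0)}v$, $m(r)=\inf_{B_r(z_0)}v$, $\omega(r)=M(r)-m(r)$, apply the Harnack above to $w_+=M(2r)-v\ge0$ and $w_-=v-m(2r)\ge0$. Since $\sup_{B_r}w_+=M(2r)-m(r)$, $\inf_{B_r}w_+=M(2r)-M(r)$, and symmetrically for $w_-$, adding the two estimates and rearranging gives
\[
\omega(r)\le \gamma\,\omega(2r) + C'\,r^\beta\|f\|_{L^q(B_{2r})}, \qquad \gamma:=\tfrac{C-1}{C+1}\in(0,1).
\]
A standard iteration lemma (Gilbarg--Trudinger, Lemma~8.23) then yields $\omega(r)\le C(r/R)^\alpha\bigl(\omega(R)+R^\beta\|f\|_{L^q(B_R)}\bigr)$ for some $\alpha\in(0,1)$ determined by $\gamma$ and $\beta$. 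Combining with the local $L^\infty$ bound $\sup_{B_R}|v|\le C(\|v\|_{L^2(B_{2R})}+R^\beta\|f\|_{L^q(B_{2R})})$---obtained by Moser iteration using the Sobolev inequality of Theorem~\ref{Sobolev}---delivers the stated H\"older estimate.

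\textbf{Main obstacle.} The principal technical difficulty is that $\mathcal L_{u,k}$ is not invariant under spatial dilations when $k<n$, and the Harnack constant in Theorem~\ref{thm-har3} genuinely depends on the ball through the factor $\Theta_{q_0}^{\delta/4}(B_{R_0})$. The uniform assumption on $\Theta_{q_0}(B_R(z))$ is exactly what allows the Harnack constant to be taken independent of the ball in Step~1, and this is indispensable for the iteration to close. At the threshold $q=n$ the exponent $\beta$ degenerates to zero and the recursion no longer yields geometric decay, which is precisely why the hypothesis $q>n$ is needed.
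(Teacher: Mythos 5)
Your proposal is correct and fills in precisely the ``standard arguments'' that the paper invokes (with references to Gilbarg--Trudinger and Han--Lin) to pass from the Harnack inequality to the H\"older estimate. The paper obtains Theorem~\ref{int-holder} by combining Theorem~\ref{thm-har} (local boundedness) and Theorem~\ref{weak-har2} (weak Harnack), which together give Theorem~\ref{thm-har3}; your route of applying the full Harnack to $M(2r)-v$ and $v-m(2r)$ and iterating the resulting oscillation decay is the same classical Moser argument, and your observations that the uniform bound on $\Theta_{q_0}$ makes the Harnack constant scale-free and that $q>n$ is needed for the exponent $\beta=2-\tfrac{2n}{q}$ to be positive match the paper's intent exactly.
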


\begin{rem}
 For the general non-uniformly elliptic equation of divergence form
\beq\label{de-eq}
D_j(a_{ij}(x)D_i u)=D_ig^i + h\quad\text{in}\,\,\Omega\subset\mathbb R^n,
\eeq
where $g=(g^1, \cdots, g^n)$ is a vector valued function and $\{a_{ij}(x)\}$ satisfy
$$0< \lambda(x)|\xi|^2\leq a_{ij}(x)\xi_i\xi_j\leq \Lambda(x)|\xi|^2,\ \ \forall\ \xi\in\mathbb R^n\setminus\{0\},$$
 the classical De Giorgi-Nash-Moser's theory was extended in \cite{MS,T1, T2}
with integrability conditions on $\lambda(x)^{-1}$ and $\Lambda(x)$. 
In particular, the local boundedness and Harnack inequality were obtained by Trudinger \cite{T1, T2} when  $\lambda^{-1}\in L_{loc}^p(\Omega)$ for some $p>n$, $\Lambda\in L^q(\Omega)$, $g\in L^{\infty}(\Omega;\mathbb{R}^n)$ and $h\in L^q(\Omega)$  satisfying 
$\frac{1}{q}+\frac{1}{p}<\frac{2}{n}$.
Very recently, it is proved that  the conclusion holds under the optimal assumption
\beq\label{pq-cond1}
\frac{1}{q}+\frac{1}{p}<\frac{2}{n-1}.
\eeq 
since it is also known to be false when $\frac{1}{q}+\frac{1}{p}<\frac{2}{n-1}+\epsilon$ \cite{FSS}.
Our results do not automatically follow from Bella-Sch\"affner's results. Indeed, let $\hat\lambda$ be and $\hat\Lambda$ be the smallest and largest eigenvalues of the coefficient matrix of \eqref{com-div-1}, respectively.  Then $\hat\Lambda\in L^{q_0}$ by \eqref{low-upp}, we have $\hat\lambda^{-1}\leq C\hat\Lambda^{n-1}\in L^{\frac{q_0}{n-1}}$. Since the real dimension is $2n$ and $q_0>n$, condition \eqref{pq-cond1}  is not satisfied.
\end{rem}

The structure of the paper is as follows. In Section \ref{sec-gr}, we  derive the integral estimates for Green's functions of the linearized complex $k$-Hessian equations. Here we used  Chen-Cheng's method of comparison with auxiliary complex Monge-Amp\`ere equation. In particular, to get the critical $L^{\frac{n}{n-1}}_*$ estimate, we need the Moser-Trudinger inequality for the
complex Monge-Amp\`ere equation \cite{BB, WWZ1}.
The new Sobolev inequalities associated to the linearized complex Monge-Amp\`ere and Hessian operators  are established in Section \ref{sec-sob}.  The  $L^{\frac{n}{n-1}}_*$ estimate is needed to obtain the critical exponent in \eqref{sob-ineq}. 
In Section \ref{sec:local}, we present the proof of Theorem \ref{main2}.
Then  following lines in \cite{HL, T2}, we use the new Sobolev inequality to prove Theorem \ref{thm-har3} in Section \ref{sec-har}.
In particular, we prove the local boundedness(Theorem \ref{thm-har}) and weak Harnack inequality(Theorem \ref{weak-har2}).
The proofs use classic Moser's iteration. A new Poincar\'e type inequality(Lemma \ref{p-ineq}) is also needed in the proof of Theorem \ref{weak-har2}. However, in our case, we only have $(\frac{2n}{2n-1},2)$-type Poincare inequality,  instead of  the  $(2,2)$-type Poincare inequality in usual case. We will do more iterations to settle this problem. Section \ref{sec-hes} is devoted to some remarks on the linearized real $k$-Hessian equations. In Section \ref{sec-scalar}, we apply the results in Section \ref{sec-har} to obtain some interior estimates and a Liouville theorem for the scalar curvature equation.  

{\bf Acknowledgments.} The authors would like to thank Guji Tian and Ling Wang for some discussions. We also thank Nam Le for bringing \cite{BS} to our attention.

\vskip 20pt

\section{Integrability of Green's function}\label{sec-gr}

In this section, we derive some integral estimates for Green's functions associated to $\mathcal L_{u,k}$. The idea is to reduce the desired estimates to the $L^\infty$-estimate of auxiliary complex Monge-Amp\`ere equations. To obtain the critical exponent, we need the following Moser-Trudinger inequality for the complex Monge-Amp\`ere equation. 

\begin{theo}\cite{BB, WWZ1} \label{MT}
	Let $\Omega$ be a bounded, smooth, pseudo-convex domain. Let $\mathcal{PSH}_0(\Omega)$  be the set of pluri-subharmonic functions 
which vanish on $\partial \Omega$. 
	Then $\forall\ u\in \mathcal{PSH}_0(\Omega)\cap C^{\infty}(\bar \Omega)$, $ u\not\equiv 0$
	\beq\label{MT0}
	\int_\Omega e^{ \alpha \left(\frac{- u}{\|u\|_{\mathcal{PSH}_0(\Omega)}}\right)^{\frac{n+1}{n}}} \leq C . 
	\eeq
	where $\alpha$ and $C$ depends on $n$ and $\text{diam}(\Omega)$. Here
	\beq
\|u\|_{\mathcal{PSH}_0(\Omega)}=\left[\frac{1}{n+1}\int_{\Omega}(-u)(dd^cu)^n\right]^{\frac{1}{n+1}}.
\eeq
\end{theo}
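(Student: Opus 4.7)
The plan is to reduce the Moser-Trudinger inequality to an exponential volume decay estimate for the sublevel sets of $u$, and then to prove that estimate by pluripotential-theoretic capacity bounds. After normalizing so that $E(u) := \frac{1}{n+1}\int_\Omega(-u)(dd^cu)^n = 1$, the layer-cake formula rewrites the left-hand side of \eqref{MT0} as
\begin{equation*}
\int_\Omega e^{\alpha(-u)^{(n+1)/n}}\,d\mu_0 \;=\; |\Omega| \;+\; \alpha\,\tfrac{n+1}{n}\int_0^\infty t^{1/n}\,e^{\alpha\,t^{(n+1)/n}}\,\bigl|\{u<-t\}\bigr|\,dt,
\end{equation*}
so, after choosing $\alpha$ small, the inequality reduces to proving the bound
$|\{u<-t\}|\le C\exp(-c\,t^{(n+1)/n})$ for all large $t$, with constants depending only on $n$ and $\operatorname{diam}(\Omega)$.

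I would produce this volume estimate in two steps. First, combine the classical Bedford--Taylor capacity inequality
\begin{equation*}
t^n\,\mathrm{Cap}\bigl(\{u<-t\},\,\Omega\bigr) \;\le\; \int_{\{u<-t\}}(dd^cu)^n
\end{equation*}
with the Chebyshev-type bound
\begin{equation*}
\int_{\{u<-t\}}(dd^cu)^n \;\le\; \frac{1}{t}\int_\Omega(-u)(dd^cu)^n \;=\; \frac{n+1}{t},
\end{equation*}
to conclude $\mathrm{Cap}(\{u<-t\},\Omega) \le (n+1)/t^{n+1}$. Second, I would invoke the Alexander--Taylor volume-versus-capacity inequality: for any Borel set $K\subset\Omega$,
\begin{equation*}
|K|_{\mu_0} \;\le\; C_1\,\exp\!\bigl(-C_2\,\mathrm{Cap}(K,\Omega)^{-1/n}\bigr),
\end{equation*}
with $C_1,C_2>0$ depending only on $n$ and $\operatorname{diam}(\Omega)$. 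Substituting the capacity bound from the first step yields $|\{u<-t\}|\le C_1\exp(-c\,t^{(n+1)/n})$, which is exactly the required tail estimate; integration in the layer-cake identity above then completes the proof for any $\alpha<c$.

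The main obstacle is the Alexander--Taylor estimate itself: the exponential relation between Lebesgue measure and Bedford--Taylor capacity is the deep pluripotential-theoretic input, and its proof relies on the existence and Lelong-number growth of the pluricomplex Green's function. One must track how its constants depend on $\operatorname{diam}(\Omega)$, which is precisely the source of the $\operatorname{diam}(\Omega)$-dependence of $\alpha$ and $C$ in the statement. A secondary, purely technical point is the justification of the capacity inequality: although the candidate test function $\max(u/t+1,0)-1$ makes the idea transparent, one must either use the smoothness of $u$ up to $\partial\Omega$ directly or approximate by bounded plurisubharmonic functions in order to apply the Bedford--Taylor integration-by-parts formula and the monotone convergence of capacities on sublevel sets.
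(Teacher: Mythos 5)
Theorem~\ref{MT} is stated in the paper only as an import from \cite{BB, WWZ1}; the paper itself gives no proof, so there is no in-text argument to compare your proposal against. Judged on its own, your capacity route is mathematically sound and constitutes a legitimate proof. The layer-cake reduction is correct, and the two-step volume estimate works: the Bedford--Taylor comparison principle together with Chebyshev gives $\mathrm{Cap}(\{u<-t\},\Omega)\lesssim (n+1)/t^{n+1}$, and the Alexander--Taylor volume--capacity inequality converts this into $|\{u<-t\}|\lesssim \exp(-ct^{(n+1)/n})$, which is exactly the tail decay needed. One small technical inaccuracy: the comparison-principle estimate you quote, $t^n\,\mathrm{Cap}(\{u<-t\},\Omega)\le \int_{\{u<-t\}}(dd^cu)^n$, is not exactly right; the standard form carries a shift, e.g.\ $t^n\,\mathrm{Cap}(\{u<-s-t\},\Omega)\le\int_{\{u<-s\}}(dd^cu)^n$, which after choosing $s=t$ and renaming only costs a dimensional constant. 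Your handling of the domain dependence is also right: the Monge--Amp\`ere capacity is invariant under complex-affine scaling and satisfies $\mathrm{Cap}(K,B)\le\mathrm{Cap}(K,\Omega)$ for $\Omega\subset B$, so applying Alexander--Taylor on a ball $B\supset\Omega$ of radius $\mathrm{diam}(\Omega)$ yields constants depending only on $n$ and $\mathrm{diam}(\Omega)$. You correctly flag Alexander--Taylor as the deep pluripotential input. As for the comparison with the cited sources: \cite{BB} proceeds variationally through Legendre duality and thermodynamical formalism, and \cite{WWZ1} argues via an iteration scheme built on the comparison principle rather than invoking Alexander--Taylor as a black box; your approach is therefore a genuinely different, more ``off-the-shelf'' pluripotential-theoretic route, at the price of relying on the quantitative form of the Alexander--Taylor estimate rather than re-deriving the needed decay directly.
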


\begin{proof}[Proof of Theorem \ref{main1}]
The proof is inspired by Chen-Cheng's approach of comparison with auxiliary complex Monge-Amp\`ere equations in the study of constant scalar K\"ahler metrics \cite{CC}, which makes uses of the algebraic structure of Monge-Amp\`ere and Hessian operators.

First, we show the weak $L^{p}_*$ estimate of Green's function for  $p=\frac{n}{n-1}$ if $0\le H_k[u]\le \Lambda$. The proof of case $(ii)$ is similar, which needs only a few modifications. 
For each $z\in\Omega$ and $t>0$, let $K_t:=\ls G_{\Omega}(z,w)>t\rs$. We are going to estimate $t^{\frac{n}{n-1}}\mu_u(K_t)$. 

Let $v^t$ be the solution to the following Dirichlet problem
\begin{align}
\begin{cases}
	\cL_{u,k}v^t=H_k^{i\bar j}\lp v^t\rp_{i\bar j}=t^{\frac{1}{n-1}}\chi_{K_t}H_k[u],\ \ \ &\text{in }\Omega,  \\[4pt]
	\ \ \ \ \ v^t=0,   &\text{on }\p\Omega.
\end{cases}
\end{align}
For any $z\in \Omega$, we have
$$|v^t(z)|=\left|\int_{K_t}G_{\Omega}(z,\cdot)t^{\frac{1}{n-1}}\,d\mu_u\right|\ge t^{\frac{n}{n-1}}\mu_u(K_t).$$
Let $\psi$ be a smooth plurisubharmonic function which solves the following equation
\begin{align}
\begin{cases}
	H_n[\psi]=\det(\psi_{i\bar j})=t^{\frac{n}{n-1}}\chi_{K_t}H_k^{\frac{n}{k}}[u],\ \ \ &\text{in }\Omega,  \\[4pt]
	\ \ \ \ \ \psi=0,   &\text{on }\p\Omega.
\end{cases}
\end{align}
We have 
\begin{eqnarray*}
H_k^{i\bar j}\psi_{i\bar j}\ge kH_k[u]^{1-\frac{1}{k}}H_k[\psi]^{\frac{1}{k}}&\ge& k\binom{n}{k}^{\frac{1}{k}}H_k[u]^{1-\frac{1}{k}}H_n[\psi]^{\frac{1}{n}}\\
&=& C_{n,k}H_k[u] t^{\frac{1}{n-1}}\chi_{K_t}=\cL_{u,k}\lp C_{n,k} v^t\rp.
\end{eqnarray*}
Therefore,
\begin{align}\label{infty-2}
t^{\frac{n}{n-1}}\mu_u(K_t)\le \|v^t\|_{L^{\infty}(\Omega)}\le \frac{1}{C_{n,k}}\|\psi\|_{L^{\infty}(\Omega)}.
\end{align}
Now we set $\phi:=t^{-\frac{1}{n-1}}\psi$, which satisfies $H_n[\phi]=\chi_{K_t}H_k[u]^{\frac{n}{k}}$. 
Hence, it suffices to obtain the $L^\infty$-estimate for $\phi$. We use
 Theorem \eqref{MT} and the iteration in \cite{WWZ2}. 

Denote $\hat \mu(E):=\int_E(\ddc\phi)^n=\mu_u(K_t\cap E)$, and $\Omega_s:=\ls \phi<-s\rs$. For $r>0$, we have
\begin{align}
r\hat\mu\lp \Omega_{r+s}\rp\le & \int_{\Omega_s}(-\phi-s)(\ddc\phi)^n   \nonumber\\
\le & \left[ \int_{\Omega_s}\lp \frac{-\phi-s}{\|\phi+s\|_{\psho(\Omega_s)}}\rp^{\beta}\,\chi_{K_t}d\mu_u\right]^{\frac{1}{\beta}}\cdot \hat\mu\lp \Omega_s\rp^{1-\frac{1}{\beta}}\cdot \|\phi+s\|_{\psho(\Omega_s)}, \label{infty-1}
\end{align}
where $\beta>0$ and
\[\|\phi+s\|_{\psho(\Omega_s)}:= \left[ \int_{\Omega_s}(-\phi-s)\lp \ddc \phi\rp^n\right]^{\frac{1}{n+1}}.\]
To estimate the right-hand side of \eqref{infty-1}, we recall the following version of Young's inequality: 
for $x$, $y>0$, let $\eta(t)=t \log^{\frac{n\beta}{n+1}}(1+t)$, then there holds
\begin{align}
x\cdot y \le &   \int_0^x\eta(t)dt+\int_0^{y}\eta^{-1}(s)ds  \nonumber\\
\le & x \log^{\frac{n\beta}{n+1}}(1+x)+\int_0^{\exp(y^{\frac{n+1}{\beta n}})}\frac{s}{1+s}\log^{\frac{n\beta}{n+1}-1}(1+s)\,ds\nonumber\\[5pt]
\leq &  x \log^{\frac{n\beta}{n+1}}(1+x)+\exp( 2y^{\frac{n+1}{\beta n}}).  \label{Young}
\end{align}
Let $x=1$, $y=\lp\frac{\alpha}{2}\rp^{\frac{n\beta}{n+1}}\cdot\lp \frac{-\phi-s}{\|\phi+s\|_{\psho(\Omega_s)}}\rp^{\beta}$ in \eqref{Young}, where $\alpha>0$ is the constant in \eqref{MT0}. By $0\le H_k[u]\le \Lambda$ we can get
\begin{align}\label{case2}
& \left[ \int_{\Omega_s}\lp \frac{-\phi-s}{\|\phi+s\|_{\psho(\Omega_s)}}\rp^{\beta}\,\chi_{K_t}d\mu_u\right]^{\frac{1}{\beta}}\nonumber\\
\le & \Lambda^{\frac{1}{\beta}}\lp\frac{2}{\alpha}\rp^{\frac{n}{n+1}}\lf \int_{\Omega_s}\lp\frac{\alpha}{2}\rp^{\frac{n\beta}{n+1}}\cdot\lp \frac{-\phi-s}{\|\phi+s\|_{\psho(\Omega_s)}}\rp^{\beta}\,d\mu_0 \rf^{\frac{1}{\beta}}  \nonumber\\[5pt]
\le & C(n,\Lambda,\Omega).
\end{align}
Note that the constants in the Moser-Trudinger inequality depend only on the diameter of the domain. 
Therefore, by \eqref{infty-1},  we obtain
\[ \int_{\Omega_s}(-\phi-s)(\ddc\phi)^n=\|\phi+s\|_{\psho(\Omega_s)}^{n+1}\leq \hat\mu\lp  \Omega_s\rp^{1-\frac{1}{\beta}}\cdot \|\phi+s\|_{\psho(\Omega_s)},\]
from which we deduce
\begin{align}
r\hat\mu(\Omega_{s+r})\le C\hat\mu(\Omega_s)^{1+\frac{\beta-n-1}{n\beta}},
\end{align}
where $C>0$ depends only on $k$, $n$, $\Omega$, $\Lambda$.
Letting $\beta\to \infty$, we have
\begin{align}\label{ite}
r\hat\mu(\Omega_{s+r})\le C\hat\mu(\Omega_s)^{1+\frac{1}{n}}.
\end{align}
In particular,
$$\hat\mu(\Omega_s)\le \frac{C}{s}\mu_u (K_t)^{1+\frac{1}{n}}$$
for some $C>1$.
Then we choose $s_0=2^{n+1}C^{n+1}\mu_u (K_t)^{\frac{1}{n}}$ such that $\hat\mu (\Omega_{s_0})\leq \frac{1}{2}\mu_u (K_t)$. For any $l\in \mathbb{Z}_+$, define 
\begin{align}\label{iteration}
	s_l=s_0+\sum_{j=1}^l2^{-\frac{j}{n}  }\mu_u (K_t)^{\frac{1}{n}},\ \ \phi^l=\phi+s^l, \ \ \Omega_l=\Omega_{s_l}.
\end{align}
Then
$$2^{-\frac{l+1}{n}}\mu_u (K_t)^{\frac{1}{n} }\hat\mu (\Omega_{l+1})=(s_{l+1}-s_l)\hat\mu (\Omega_{l+1})\le C\hat\mu (\Omega_l)^{1+\frac{1}{n}}.$$
We claim that $|\Omega_{l+1}|\leq \frac{1}{2}|\Omega_l|$ for any $l$. By induction, we assume the inequality holds for $l\leq m$. 
Then
\begin{align*}
	\hat\mu (\Omega_{m+1})\leq &  C\hat\mu (\Omega_m)^{1+\frac{1}{n} }\frac{2^{\frac{m+1}{n}}}{\mu_u (K_t)^{\frac{1}{n} }}    \\
	\leq &  C\left[\left(\frac{\hat\mu (\Omega_0)}{2^m}\right)^{\frac{1}{n} }\frac{2^{\frac{m+1}{n}}}{\mu_u (K_t)^{\frac{1}{n} }}\right]\cdot |\Omega_m|  \\
	\leq & \left[C^{1+\frac{1}{n} }\frac{2^{\frac{1}{n} }}{s_0^{\frac{1}{n} }}\mu_u (K_t)^{\frac{1}{n^2}}\right]\cdot \hat\mu (\Omega_m)   
	\leq \frac{1}{2}\hat\mu (\Omega_m). 
\end{align*}
This implies that the set 
$$\hat\mu \left(\left\{x\in \Omega\big|\phi<-s_0-\sum_{j=1}^{\infty}2^{-\frac{j}{n}}\mu_u (K_t)^{\frac{1}{n}}.\right\}\right)=0.$$
Hence, 
\beqs
\|\phi\|_{L^{\infty}(\Omega, \mu_u)}&\leq& s_0+\sum_{j=1}^{\infty}2^{-\frac{j}{n}}\mu_u (K_t)^{\frac{1}{n} }\\
&=&2^{n+1}C^{n+1}\mu_u (K_t)^{\frac{1}{n}}+\frac{1}{2^{\frac{1}{n} }-1}\mu_u (K_t)^{\frac{1}{n} }\\
&\leq& C\mu_u (K_t)^{\frac{1}{n} }.
\eeqs
Therefore, we get
\[-\psi\le t^{\frac{1}{n-1}}\|\phi\|_{L^{\infty}}\le Ct^{\frac{1}{n-1}}\mu_u(K_t)^{\frac{1}{n}}.\] 
By \eqref{infty-2}, this implies
$t^{\frac{n}{n-1}}\mu_u(K_t)\le C$. $t^{\frac{n}{n-1}}\mu_u(K_t)\le C(n,\Lambda,\Omega)$.

\vskip 10pt

To prove $(ii)$, we suppose $1\le p<\frac{n}{n-1}$. Note that for any $q>1$ and $1<\beta<\infty$, there exists $C=C(\Lambda_q,\beta)$, such that 
$$\int_{\Omega_s}H_k[u]\log( 1+H_k[u])^{\frac{\beta n}{n+1}}\le C.$$
Let $x=H_k[u]$, $y=\lp\frac{\alpha}{2}\rp^{\frac{n\beta}{n+1}}\cdot\lp \frac{-\phi-s}{\|\phi+s\|_{\psho(\Omega_s)}}\rp^{\beta}$ in \eqref{Young}. By the inequality $(a+b)^{\frac{1}{\beta}}\le a^{\frac{1}{\beta}}+b^{\frac{1}{\beta }}$ for $\beta>1$, we can get
\begin{align}\label{MT-use}
& \left[ \int_{\Omega_s}\lp \frac{-\phi-s}{\|\phi+s\|_{\psho(\Omega_s)}}\rp^{\beta}\,\chi_{K_t}d\mu_u\right]^{\frac{1}{\beta}}\nonumber\\
\le & \lp\frac{2}{\alpha}\rp^{\frac{n}{n+1}}\cdot \left[ \int_{\Omega_s}\lp\frac{\alpha}{2}\rp^{\frac{n\beta}{n+1}}\cdot \lp \frac{-\phi-s}{\|\phi+s\|_{\psho(\Omega_s)}}\rp^{\beta}\,H_k[u]d\mu_0\right]^{\frac{1}{\beta}}\nonumber\\
\le & \left[ \int_{\Omega_s}\exp\ls \alpha\lp \frac{-\phi-s}{\|\phi+s\|_{\psho(\Omega_s)}}\rp^{\frac{n+1}{n}}\rs\,d\mu_0\right]^{\frac{1}{\beta}}  \nonumber\\[5pt]
& \ \ \ +C\lf \int_{\Omega_s}H_k[u]\log( 1+H_k[u])^{\frac{\beta n}{n+1}}\rf^{\frac{1}{\beta}}  \nonumber\\
\le & C(n,p,\Lambda_q,\Omega,\beta).
\end{align} 
Therefore, by \eqref{infty-1},  we obtain
\[ \int_{\Omega_s}(-\phi-s)(\ddc\phi)^n=\|\phi+s\|_{\psho(\Omega_s)}^{n+1}\leq \hat\mu\lp  \Omega_s\rp^{1-\frac{1}{\beta}}\cdot \|\phi+s\|_{\psho(\Omega_s)},\]
from which we deduce
\begin{align}
r\hat\mu(\Omega_{s+r})\le C\hat\mu(\Omega_s)^{1+\frac{\beta-n-1}{n\beta}},
\end{align}
where $C>0$ depends on $k$, $n$, $\Omega$, $\Lambda_q$ and $\beta$ for every $0<\beta<\infty$. Then by the same arguments, we get
$$t^{\frac{n\beta}{(\beta-n-1)(n-1)}}\mu_u(K_t)\le C(n,p,\Lambda_q,\Omega,\beta),\ \ \ \forall 0<\beta<\infty,$$
i.e., $\|G\|_{L_*^{\frac{n\beta}{(\beta-n-1)(n-1)}}(\Omega,\, d\mu)}\leq C$. For any $p<\frac{n}{n-1}$, we choose $\beta$ sufficiently large such that $\frac{n\beta}{(\beta-n-1)(n-1)}>p$. Hence, we have $\|G\|_{L^p(\Omega,d\mu)}\leq C\|G\|_{L_*^{\frac{n\beta}{(\beta-n-1)(n-1)}}(\Omega,\, d\mu)}\leq C$,
and thereby completing the proof of $(ii)$.
\end{proof}

\begin{rem}
As can be seen from the proof, by modifying the exponents in \eqref{infty-1}, for any $\beta>0$, there exists $C>0$ depends on  $\beta$ and $\|H_k[u]^{\frac{n}{k}}\|_{L^1\lp \log L^1\rp^{\beta n}(\Omega)}$, such that
$$t^{\frac{n\beta}{(n-1)\beta+1}}\mu_u(\ls G_{\Omega}>t\rs)\le C.$$
Here $\|f\|_{L^1\lp \log L^1\rp^{\beta n}(\Omega)}=\int_{\Omega}|f|\lp\log\lp 1+|f|\rp\rp^{\beta n}\,d\mu_0$.
\end{rem}

\vskip 20pt

\section{Proof of the Sobolev inequality}\label{sec-sob}

Once we have the decay estimate of the Green's function \eqref{w-lp}, the rest proof for the Sobolev inequality is similar to \cite{TW}, which deals with the case of real Monge-Amp\`ere equation. For completeness,
we include the details here.

\begin{proof}[Proof of Theorem \ref{Sobolev}]
Define 
\beq\label{mini-fun}
s^*=\inf\left\{\int_{\Om}H_k^{i\bar j}v_i\bar v_{j}\,\omega_0^n,\ v\in C_0^1(\bar\Omega), \ \int_\Omega F(u)H_k[u]\,\omega_0^n=1\right\},
\eeq
where $F(s)=\int_0^sf(t)\,dt$
and
\[f(t)=\begin{cases}
|t|^{p-1}& \text{if} \ |t|<K,\\
K^{p-1}& \text{if} \ |t|\geq K.
\end{cases}\]
$K$ is going to be sent to infinity. Let $v$ be the function where \eqref{mini-fun} attains the infimum. Therefore, $v$ satisfies
\beq\label{min-eq}
\begin{cases}
\mathcal L_{u,k}v&=-\hat\lambda f(v)H_k[u]  \ \ \ \text{in} \ \Omega,\\
\  \ \ \ \ v&=0 \ \ \ \ \  \text{on} \ \partial \Omega.
\end{cases}
\eeq
Here $\hat\lambda$ is the Lagrange multiplier associated to the minimization. It suffices to establish a lower bound estimate on $s^*$.
By the choice of $f$, we obtain
\beq\label{lam-s}
\hat\lambda\leq s^*\leq p\hat\lambda. 
\eeq
We divide the proof into three steps:

{\it Step 1(The first eigenvalue estimate):} Let $\mu_u=H_k[u]\,\omega_0^n$ be the complex $k$-Hessian measure associated to $u$. By Theorem \ref{lp-green}, there exists $\alpha>0$ depending on $p$ such that for any $t>0$,
\beq\label{green-decay}
\mu_u(\{z\in\Omega: G>t\})\leq \alpha t^{-\frac{p}{2}},
\eeq
where $1\leq p\le \frac{2n}{n-1}$.

For any open subset $U\subset\Omega$, let $\psi_1$ be the first eigenfunction and $\lambda_1(U)$ the first eigenvalue  of $\mathcal L_{u,k}$ with respect to $\mu_u$, namely
\begin{equation}\label{eigen-first}
\begin{cases}
	\mathcal L_{u,k}\psi_1=-\lambda_1(U)\psi_1H_k[u]&   \text{in} \ U,\\
	\  \ \ \ \ \psi_1=0 &\text{on} \ \partial U.
\end{cases}
\end{equation}
Let $G_U$ be the Green's function of $\mathcal L_{u,k}$ on $U$. By the maximum principle, $0\leq G_U(z, w)\leq G(z,w)$ for any $z, w\in U$.
Then we have
\[\psi_1(w)=\lambda_1(U)\int_U G_U(\cdot,w)\psi_1(\cdot)H_k[u]\,\omega_0^n.\]
Normalizing $\psi_1$ so that it $\|\psi_1\|_{L^\infty(U)}=1$. Choose $w$ to be the maximum of $\psi_1$.
Then  we obtain
\beq\nonumber
1\leq\lambda_1(U)\int_U G_U(,w) H_k[u]\,\omega_0^n.
\eeq
By \eqref{green-decay}, we have
\begin{eqnarray*}
\int_U G_U(,w) H_k[u]\,\omega_0^n&=&\int_0^{+\infty} \mu_u(\{z\in U: G_U>t\})\,dt\\
&\leq &\int_0^{+\infty} \min\{\mu_u(U), \alpha t^{-\frac{p}{2}}\}\,dt\\[5pt]
&=& T\mu_u(U)+\alpha\int_{T}^{+\infty}t^{-\frac{p}{2}}\,dt\\
&\leq& \lp\frac{1}{\frac{p}{2}-1}+1\rp\alpha^{\frac{2}{p}}[\mu_u(U)]^{1-\frac{2}{p}},
\end{eqnarray*}
where $T=(\frac{\alpha}{\mu_u(U)})^{\frac{2}{p}}$. It follows that for $p\le \frac{2n}{n-1}$
\beq\label{eigen-est}
c^*:=\inf_{U\subset \subset \Omega}\ls\lambda_1(U)[\mu_u(U)]^{1-\frac{2}{p}}\rs \geq \lp 1-\frac{2}{p}\rp\alpha^{-\frac{2}{p}}>0.
\eeq

\vskip 10pt

{\it Step 2(Estimate of the level set of $v$):} 
Denote $M=\|v\|_{L^\infty}$ and $\Omega_t=\{v>M-t\}$ for $t\leq M$. We claim that for $p>2$
\beq\label{lever-est}
\mu_u(\Omega_t)\geq \left(\frac{tc^*}{2s^*M^{p-1}}\right)^{\frac{2}{p-2}},  \ t\in (0,M).
\eeq
In fact, by \eqref{min-eq},
\begin{eqnarray*}
\lambda_1(\Omega_t)&\leq& \frac{\int_{\Om}H_k^{i\bar j}v_i\bar v_{j}\,\omega_0^n}{\int_{\Omega_t}(v-M+t)^2H_k[u]\,\omega_0^n}\\[4pt]
&=& \frac{\int_{\Om} (v-M+t)(-\mathcal L_{u,k}v)\,\omega_0^n}{\int_{\Omega_t}(v-M+t)^2H_k[u]\,\omega_0^n}\\[4pt]
&=& s^*M^{p-1}\frac{\int_{\Om} (v-M+t)H_k[u]\,\omega_0^n}{\int_{\Omega_t}(v-M+t)^2H_k[u]\,\omega_0^n}\\[4pt]
&\leq & s^*M^{p-1}\left(\frac{\mu_u(\Omega_t)}{\int_{\Omega_t}(v-M+t)^2H_k[u]\,\omega_0^n}\right)^{\frac{1}{2}}\\
&\leq & s^*M^{p-1}\left(\frac{\mu_u(\Omega_t)}{(\frac{t}{2})^2\mu_u(\Omega_{t/2})}\right)^{\frac{1}{2}}.
\end{eqnarray*}
By \eqref{eigen-est},
\[c^*\mu_u(\Omega_t)^{\frac{2}{p}-1}\leq\lambda_1(\Omega_t)\leq\frac{2s^*M^{p-1}}{t}\left(\frac{\mu_u(\Omega_t)}{\mu_u(\Omega_{t/2})}\right)^{\frac{1}{2}},\]
i.e.,
\beq\label{lever-decay}
\mu_u(\Omega_t)\geq \beta\mu_u(\Omega_{t/2})^{\frac{p}{3p-4}}, \ \ \beta=\left(\frac{tc^*}{2s^*M^{p-1}}\right)^{\frac{2p}{3p-4}}.
\eeq
By iteration, for any $m$,
\[
\mu_u(\Omega_t)\geq \beta^{\sum_{j=0}^{m-1}(\frac{p}{3p-4})^j}\mu_u(\Omega_{t/2^m})^{(\frac{p}{3p-4})^m}.
\]
It is easy to see that $$\sum_{j=0}^{+\infty}(\frac{p}{3p-4})^j=\frac{3p-4}{2(p-2)}$$ and hence 
\[\beta^{\sum_{j=0}^{+\infty}(\frac{p}{3p-4})^j}=\beta^{\frac{3p-4}{2(p-2)}}=\left(\frac{tc^*}{2s^*M^{p-1}}\right)^{\frac{2}{p-2}}.\]
It suffices to show that
\beq\label{lim-term}
\lim_{m\to+\infty}\mu_u(\Omega_{t/2^m})^{(\frac{p}{3p-4})^m}=1.
\eeq
Indeed, we choose $z_0$ such that $v(z_0)=M$ and $a=\|\nabla v\|_{L^\infty(\Omega)}$. Then we have $B(z_0,\frac{t}{2^ma})\subset\Omega_{t/2^m}$. By condition \eqref{low-upp},
\begin{eqnarray}\label{db-arg}
\mu_u(\Omega_{t/2^m})\geq \mu_u(B(z_0,\frac{t}{2^ma}))&\geq&\lambda |B(z_0,\frac{t}{2^ma})|\nonumber\\
&=&\frac{\lambda}{2^{2n}} |B(z_0,\frac{t}{2^{m+1}a})|\nonumber\\
&=&\frac{\lambda}{2^{2n}\Lambda} \mu_u(B(z_0,\frac{t}{2^{m+1}a}))\nonumber\\
&\geq& \left(\frac{\lambda}{2^{2n}\Lambda}\right)^{m-m_0} \mu_u(B(z_0,\frac{t}{2^{m_0}a})),
\end{eqnarray}
where $m_0$ is the smallest number such that $B(z_0,\frac{t}{2^{m_0}a})\subset\Omega$. Therefore, \eqref{lim-term} holds when $p>2$.

\vskip 10pt

{\it Step 3(The lower bound of $s^*$):} 
We claim that 
$$s^*\geq C c^*.$$
By the normalizing condition $\int_\Omega F(v)H_k[u]\,\omega_0^n=1$, we have
\begin{eqnarray*}
1&=&\int_0^{F(M)}\mu_u(\{z\in\Omega:\ F(v)>t\})\,dt\\
&=&\int_0^{M}\mu_u(\{z\in\Omega:\ v>t\})F'(t)\,dt\\
&=&\int_0^{M}\mu_u(\{z\in\Omega:\ v>M-t\})F'(M-t)\,dt\\
&=&\int_0^{M}\mu_u(\Omega_t)F'(M-t)\,dt\\
&\geq &\left(\frac{c^*}{2s^*M^{p-1}}\right)^{\frac{2}{p-2}}\int_0^{M}t^{\frac{2}{p-2}}F'(M-t)\,dt\\
&=&\left(\frac{c^*}{2s^*}\right)^{\frac{2}{p-2}}\int_0^1t^{\frac{2}{p-2}}\frac{F'(M(1-t))}{M^{p-1}}\,dt.
\end{eqnarray*}
It remains to prove
\beq\label{int-est1}
\int_0^1t^{\frac{2}{p-2}}\frac{F'(M(1-t))}{M^{p-1}}\,dt\geq C>0.
\eeq
By the definition of $f$,
\[F'(M(1-t))=\begin{cases}
pM^{p-1}(1-t)^{p-1},&\ \text{if}\ \ 1-\frac{K}{M}<t<1,\\[4pt]
K^{p-1},&\ \text{if }\ \ 0<t<1-\frac{K}{M}.
\end{cases}\]
Hence, we have
\beq\label{int-F}
\int_0^1t^{\frac{2}{p-2}}\frac{F'(M(1-t))}{M^{p-1}}\,dt=\left(\frac{K}{M}\right)^{p-1}\int_0^{1-\frac{K}{M}}t^{\frac{2}{p-2}}\,dt+p\int_{1-\frac{K}{M}}^1t^{\frac{2}{p-2}}(1-t)^{p-1}\,dt.
\eeq
Let $V=\{z\in\Omega|\ v>K\}$.
We use 
\[1=\int_\Omega F(v)H_k[u]\,\omega_0^n\geq \frac{1}{p}\int_V K^pH_k[u]\,\omega_0^n\]
again to get $\mu_u(V)\leq pK^{-p}$.
In $V$, $v$ satisfies the equation 
\[\mathcal L_{u,k}v=-\hat\lambda f(v)H_k[u].\]
By \eqref{lam-s},
$\hat\lambda\leq s^*\leq C$. Then by \eqref{eigen-est} with $v=K$ on $\partial V$ we have
\begin{eqnarray*}
v(z)&\leq& K+\hat\lambda K^{p-1}\int_V G_V(z,\cdot)H_k[u]\,\omega_0^n\\
&\leq& K+C K^{p-1}\mu_u(V)^{1-\frac{2}{p}}
\leq (1+C)K.
\end{eqnarray*} 
That is, $\frac{K}{M}\geq \frac{1}{1+C}$. In conclusion, by \eqref{int-F}, we obtain \eqref{int-est1}.
\end{proof}

\begin{rem} 
By checking the proof, the  Sobolev inequalities hold under $H_k[u]\leq \Lambda$ and the doubling condition 
\begin{align}\label{doub}
\mu_u(B_r)\ge b\mu_u(B_{2r}),\ \ \ \text{for }0<r<1\text{, and some }b>0.
\end{align}  
Indeed, the only place using the lower bound of $H_k[u]$ is \eqref{db-arg}, which can be replaced by
\begin{eqnarray*}
\mu_u(\Omega_{t/2^m})\geq \mu_u(B(z_0,\frac{t}{2^ma}))
\geq b \cdot\mu_u(B(z_0,\frac{t}{2^{m-1}a}) 
\geq b^{m-m_0} \mu_u(B(z_0,\frac{t}{2^{m_0}a})).
\end{eqnarray*}
\end{rem}

\vskip 20pt

\section{A local estimate for complex Hessian equations}\label{sec:local}

In section, we study the local regularity of $W^{2, p}$-solutions of the complex Hessian equations under suitable regularity assumptions on 
the right-hand side.


\begin{proof}[Proof of Theorem \ref{main2}]
In \cite{DK}, the same estimate was obtained under the assumption $f\in C^{1,1}$. Here we only need $f\in C^1$.
Denote $v=\triangle u$. Differentiating \eqref{k-hess-c} twice, we have
\begin{eqnarray*}
\frac{1}{H_k[u]}H_k^{i\bar j}u_{i\bar jl\bar l}\geq g_{l\bar l}=\frac{f_{l\bar l}}{f}-\frac{|f_l|^2 }{f^2},
\end{eqnarray*}
i.e., 
\[H_k^{i\bar j}v_{i\bar j}\geq \Delta f-\frac{|\D f|^2}{f}.\]
Hence, we have
\begin{equation}\label{eq-41}
\int_{\Omega} H_k^{i\bar j}D_{i}v D_{\bar j}\varphi\leq \int_{\Omega} \lp \frac{|\D f|^2}{f}-\Delta f\rp \varphi\,d\mu_0\quad \text{for\,\,any}\,\, \varphi\in W^{1,2}_0(\Omega)\text{\ and}\,\,\varphi\geq 0.
\end{equation}

Let $\eta\in C^\infty_0(B_1)$ and choose the test function
$\varphi=\eta^2v^{\beta+1}\in W^{1,p}_0(B_1)$
for some $\beta\geq 0$ to be determined later. By \eqref{eq-41},
\begin{eqnarray*}
(\beta+1)\int_{\Omega}H_k^{i\bar j}D_iv D_{\bar j}v v^{\beta} \eta^2\, d\mu_0
\leq 2\int_{\Omega} |H_k^{i\bar j} D_iv D_{\bar j}\eta| v^{\beta+1} \eta \, d\mu_0+\int_{\Omega}\lp \frac{|\D f|^2}{f}-\Delta f\rp v^{\beta+1}\eta^2\, d\mu_0,
\end{eqnarray*}
which implies
\begin{eqnarray*}
(\beta+\frac{1}{2})\int_{\Omega}H_k^{i\bar j}D_iv D_{\bar j}v v^{\beta} \eta^2\, d\mu_0
\leq 2\int_{\Omega} H_k^{i\bar j} D_i\eta D_{\bar j}\eta v^{\beta+2}  \, d\mu_0+\int_{\Omega}\lp \frac{|\D f|^2}{f}-\Delta f\rp v^{\beta+1}\eta^2\, d\mu_0.
\end{eqnarray*}
By direct computations, we have
\begin{eqnarray*}
&& \int_{\Omega}\lp \frac{|\D f|^2}{f}-\Delta f\rp v^{\beta+1}\eta^2\,d\mu_0 \\ 
&\le& C\int_{\Omega}|D f|^2v^{\beta+1}\eta^2\,d\mu_0+(\beta+1)\int_{\Omega}v^{\beta}\lan \D f,\,\D v\ran\eta^2\,d\mu_0   +\int_{\Omega}2\eta\lan \D f,\D\eta\ran v^{\beta+1}\,d\mu_0  \\
&\le & C\int_{\Omega}v^{\beta+1}\eta^2(1+|\D \eta|^2)\,d\mu_0+2C(\beta+1)\int_{\Omega}\sum_i\frac{1}{H_{k}^{i\bar i}}\left|\D f\right|^2v^{\beta}\eta^2\,d\mu_0  \\
&& \ \ \ +\frac{\beta+1}{2}\int_{\Omega}\sum_iH_k^{i\bar i}D_iv D_{\bar i}v \,v^{\beta}\eta^2\,d\mu_0.   
\end{eqnarray*}
Note that $0<\lambda\leq H_k[u]\leq \Lambda$. 
Recalling the fundamental inequalities \cite{W}, we have
$$\prod_i H_k^{i\bar i}\ge C_{n,k}{ H_{k-1}}^{\frac{n(k-2)}{k-1}}\ge C.$$
Hence, 
$$\sum_i\frac{1}{H_k^{i\bar i}}=\frac{\sum_i\prod_{j\neq i}H_k^{j\bar j}}{\prod_iH_k^{i\bar i}}\le 
C\frac{H_k[u]^{n-1}}{\prod_iH_k^{i\bar i}}\leq C.$$
We conclude that
\begin{align}\label{eq-42}
\beta\int_{\Omega}v^{\beta}H_k^{i\bar j}D_iv D_{\bar j}v  \eta^2\, d\mu_0\le 4\int_{\Omega} H_k^{i\bar j} D_i\eta D_{\bar j}\eta v^{\beta+2}  \, d\mu_0+C(\beta+1)\int_{\Omega}v^{\beta+1}\eta^2\,d\mu_0.
\end{align}
Let $w=v^{\frac{\beta}{2}+1}$. Then $D_iw=(\frac{\beta}{2}+1)v^{\frac{\beta}{2}} v_i$
and
\begin{eqnarray*}
H_k^{i\bar j}D_i(w\eta)D_{\bar j}(w\eta)&\le& 2H_k^{i\bar j}D_iw D_{\bar j}w\eta^2 + 2H_k^{i\bar j}\D_i\eta D_{\bar j} w^2\\
&= &2\lp\frac{\beta}{2}+1\rp^2v^{\beta}H_k^{i\bar j}D_iv D_{\bar j}v  \eta^2+2v^{\beta+2}H_k^{i\bar j}D_i\eta D_{\bar j}\eta.
\end{eqnarray*}
By $\sum_iH^{i\bar i}_k=C_{n,k} H_{k-1}[u]\leq C'_{n,k} v^{k-1}$ and \eqref{eq-42},
\begin{eqnarray}
&&\int_{B_1}H_k^{i\bar j}D_i(w\eta)D_{\bar j}(w\eta)\,d\mu_0\nonumber\\
&\leq&
\int_{B_1}2(\frac{\beta}{2}+1)^2H_k^{i\bar j}D_iv D_{\bar j}v v^{\beta} \eta^2\, d\mu_0
+\int_{B_1}2v^{\beta+2}H_k^{i\bar j}D_i\eta D_{\bar j}\eta\,d\mu_0\nonumber\\
&\leq&C(\beta+1)\left(\int_{B_1}v^{\beta+2}H_k^{i\bar j}D_i\eta D_{\bar j}\eta\,d\mu_0+(\beta+1)\int_{B_1}v^{\beta+1}\eta^2\, d\mu_0\right).\nonumber\\
&\leq&C(\beta+1)^2\int_{B_1}v^{\beta+1+k}|D\eta|^2\,d\mu_0.\nonumber
\end{eqnarray}
Here we also used $\triangle v\geq H_k[u]^{\frac{1}{k}}\geq C$.
Hence, by the Sobolev inequality \eqref{sob-ineq},
\begin{eqnarray}
\|w\eta\|_{L^\frac{2n}{n-1}(B_1)}&\leq &C\left(\int_{B_1}H_k^{i\bar j}D_i(w\eta)D_{\bar j}(w\eta)\,d\mu_0\right)^{\frac{1}{2}}\nonumber\\
&\leq&C(\beta+1)\left(\int_{B_1}v^{\beta+1+k}|D\eta|^2\,d\mu_0\right)^{\frac{1}{2}}.\nonumber
\end{eqnarray}
Now for any $0<r<R\leq 1$, we choose the  cutoff function $\eta\in C_0^\infty(B_R)$ with the property
$$0\leq\eta\leq 1,\quad\eta\equiv 1\,\,\text{in}\,\, B_r\quad\text{and}\quad |D\eta|\leq \frac{2}{R-r}.$$
Then we obtain
$$\|v^{\frac{\beta}{2}+1}\|_{L^\frac{2n}{n-1}(B_r)}\leq \frac{C(1+\beta)}{R-r}\left(\int_{B_R}v^{\beta+1+k}\,d\mu_0\right)^{\frac{1}{2}}.$$
The exponent of  integrability is improved if
 $(\frac{\beta}{2}+1)\frac{2n}{n-1}>\beta+1+k$,
i.e.,
$\beta>(n-1)k-n-1$.

Let $\alpha=\beta+2$, $\gamma=\frac{n}{n-1}$ and $\rho=R-r$.
$$\left(\int_{B_r}v^{\alpha\gamma}\,d\mu_0\right)^{\gamma^{-1}}\leq \frac{C\alpha^2}{\rho^2}\left(\int_{B_R}v^{\alpha-1+k}\,d\mu_0\right).$$
We now define a sequence $\{\alpha_j\}$ inductively by
$$\alpha_j=\gamma\alpha_{j-1}-(k-1).$$
By iteration,
\begin{eqnarray*}
\left(\int_{B_{r+2^{-j}\rho}}v^{\alpha_j\gamma}\,d\mu_0\right)^{\gamma^{-(j+1)}}\leq \left(\frac{C}{\rho^2}\right)^{\sum_{i=0}^j\gamma^{-i}}\cdot 4^{\sum_{i=0}^j i\gamma^{-i}}\cdot\prod_{i=0}^j \alpha_i^{2\gamma^{-i}}\cdot\left(\int_{B_R}v^{\alpha_0-1+k}\,d\mu_0\right).
\end{eqnarray*}
Since $\alpha_i=\gamma\alpha_{i-1}-(k-1)\leq \gamma\alpha_{i-1}\leq \cdots\leq \gamma^i\alpha_0$, we have
\[\prod_{i=0}^j \alpha_i^{2\gamma^{-i}}\leq \prod_{i=0}^j (\gamma^i\alpha_0)^{2\gamma^{-i}}=\gamma^{\sum_{i=0}^j2i\gamma^{-i}}\cdot
\alpha_0^{\sum_{i=0}^j2\gamma^{-i}}\leq C\]
as $j\to\infty$.
By direct computation,
\begin{eqnarray*}
\alpha_j&=&\gamma\alpha_{j-1}-(k-1)\\
&=&\cdots\\
&=&\gamma^{j}\alpha_0-(k-1)(1+\gamma+\cdots+\gamma^{j-1})\\
&=&\gamma^{j}\alpha_0-(k-1)\frac{1-\gamma^j}{1-\gamma}\\
&=&\gamma^{j}(\alpha_0-\frac{k-1}{\gamma-1})+\frac{k-1}{\gamma-1}.
\end{eqnarray*}
Therefore,
\[\left(\int_{B_{r+2^{-j}\rho}}v^{\alpha_j\gamma}\,d\mu_0\right)^{\frac{1}{\alpha_j\gamma} \left[\alpha_0-(k-1)\frac{1-\gamma^{-j}}{\gamma-1}\right]  }\leq C\int_{B_R}v^{\alpha_0-1+k}\,d\mu_0.\]
It is clear that $\alpha_j\to\infty$ when $\alpha_0>\frac{k-1}{\gamma-1}=(k-1)(n-1)$, i.e., 
$\alpha_0+k-1>(k-1)n$. This implies 
$$\sup_{B_r}|\triangle u|\le C\left(\int_{B_R}v^{p}\,d\mu_0\right)^{\frac{1}{p-(k-1)n}}$$
for  $C>0$ depends on $r$, $R$,
$k$, $n$, $p$, $\lambda$, $\Lambda$, $\|f\|_{C^1(\Omega)}$ and $\|u\|_{W^{2,p}(\Omega)}$ with $p>(k-1)n$.
\end{proof}

\vskip 20pt

\section{The Harnack inequality}\label{sec-har}

With the Sobolev inequality, a natural idea is to use De Giorgi-Nash-Moser's iteration method to investigate the regularity of the equations.
Balls in this section will be generally assumed concentric, in which case we simply write $B_{R}(z_0)=B_R$.

\subsection{The local boundedness}

First, we show the local boundedness of the solution to the linearized complex Hessian equation \eqref{com-div-1}.  For later use, we consider the equation with a lower order term
\beq\label{com-div-2}
D_{\bar j}(H_k^{i\bar j}D_i v)+c(z)v(z)=f(z).
\eeq

\begin{theo}[Local boundedness]\label{thm-har}
Assume $u$ is a smooth, strictly $k$-plurisubharmonic function($2\leq k\leq n$) that satisfies 
\eqref{low-upp}
for some $\lambda, \Lambda\in\mathbb R$. Assume $f(z), c(z)\in L^q(\Omega)$ with $q>n$.
Let $v\in W^{1,2}(\Omega)$ be a subsolution to 
\eqref{com-div-2}
in the following sense:
\begin{equation}\label{eq}
\int_{\Omega} H_k^{i\bar j}D_{i}v D_{\bar j}\varphi+cv\varphi\, d\mu_0\leq \int_{\Omega} f\varphi\,d\mu_0\quad \text{for\,\,any}\,\, \varphi\in W^{1,2}_0(\Omega)\text{\ and}\,\,\varphi\geq 0.
\end{equation}
Then for any ball $B_R\subset \Omega$, $0<r<R$, $n<q_0\leq q$, and $p>0$
\beq\label{loc-b}
\sup_{B_{r}}v\leq C \Theta_{q_0}(B_R)^{\frac{\delta}{4}}\cdot\left(\frac{1}{(R-r)^{\frac{2n}{p}}}\|v^+\|_{L^p(B_R)}+R^{2-\frac{2n}{q}}\|f\|_{L^{q}(B_R)}\right),\eeq
where
where $\delta=\max\{\frac{q_0n}{q_0-n}, \frac{qn}{q-n}\}$ and $C$ depends only on $k$, $n$, $p$, $q_0$, $q$,  $\lambda$, $\Lambda$,  $\|f\|_{L^q(\Omega)}$ and $\|c\|_{L^q(\Omega)}$.
\end{theo}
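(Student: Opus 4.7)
The strategy is De Giorgi–Nash–Moser iteration for the subsolution inequality, using Theorem \ref{Sobolev} in place of the classical Sobolev embedding and using the integrability of $\mathcal U$ (encoded in $\Theta_{q_0}(B_R)$) to compensate for the non-uniform ellipticity of $\cL_{u,k}$. First, I normalize by setting $\bar v := v^+ + k_0$ with $k_0 = R^{2-2n/q}\|f\|_{L^q(B_R)}$, so that the $f$- and $c$-terms become admissible perturbations after H\"older's inequality with exponent $q$. For $\beta\geq 1$ and a cutoff $\eta\in C_0^\infty(B_R)$, I test the subsolution formulation against $\varphi=\eta^2\bar v^\beta$; using Cauchy–Schwarz on the cross terms and the pointwise bound $H_k^{i\bar j}D_i\eta D_{\bar j}\eta \leq \mathcal U|\D\eta|^2$, this yields, with $w:=\bar v^{(\beta+1)/2}$, the Caccioppoli-type estimate
\[
\int H_k^{i\bar j}D_i(w\eta)D_{\bar j}(w\eta)\,d\mu_0 \leq C(\beta+1)^2\int \mathcal U|\D\eta|^2 w^2\,d\mu_0 + \text{(lower order)}.
\]
Applying Theorem \ref{Sobolev} at the critical exponent $\tfrac{2n}{n-1}$ together with $\lambda\,d\mu_0\leq d\mu_u\leq \Lambda\,d\mu_0$ bounds the left side below by $\|w\eta\|_{L^{2\chi}(B_R)}^2$ with $\chi := n/(n-1)$.

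The decisive step is controlling the $\mathcal U$-term. H\"older's inequality with the conjugate pair $(q_0,\sigma)$, where $\sigma := q_0/(q_0-1)$, gives
\[
\int \mathcal U|\D\eta|^2 w^2\,d\mu_0 \leq \Theta_{q_0}(B_R)\,|B_R|^{1/q_0}\,\|\D\eta\|_\infty^2\,\|w\|_{L^{2\sigma}(B_R)}^2.
\]
The hypothesis $q_0>n$ is equivalent to $1<\sigma<\chi$, so interpolating $\|w\|_{L^{2\sigma}}$ between $\|w\|_{L^2}$ and $\|w\|_{L^{2\chi}}$ and applying Young's inequality with a small $\epsilon$ absorbs the $L^{2\chi}$-portion into the left side. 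After rearrangement, this produces a reverse H\"older inequality
\[
\|\bar v\|_{L^{(\beta+1)\chi}(B_r)} \leq \Big(\frac{C(\beta+1)^a\,\Theta_{q_0}(B_R)^b}{(R-r)^c}\Big)^{1/(\beta+1)}\|\bar v\|_{L^{\beta+1}(B_R)},
\]
with explicit $a,b,c>0$ depending on $k,n,q_0,q,\lambda,\Lambda$, valid on any pair of concentric balls $B_r\subset B_R$.

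Finally, I iterate with $p_j = p\chi^j$ and shrinking radii $R_j = r + (R-r)2^{-j}$; summing the convergent geometric series $\sum_j \chi^{-j}$ gives a finite accumulated power on $\Theta_{q_0}(B_R)$, bounded above by $\delta/4$ with $\delta=\max\{q_0n/(q_0-n),qn/(q-n)\}$, the spatial factor $(R-r)^{-2n/p}$, and the $\|f\|_{L^q}$-dependence through $k_0$. This proves \eqref{loc-b} for $p\geq 2$; the extension to $0<p<2$ follows from the classical trick of combining the $p=2$ estimate with $\|\bar v\|_{L^2}^2 \leq \|\bar v\|_{L^\infty}^{2-p}\|\bar v\|_{L^p}^p$ and Young's inequality. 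The principal obstacle is the non-uniform ellipticity: $\mathcal U$ is unbounded, so the naive Caccioppoli step cannot be closed; the assumption $q_0>n$ is precisely what forces $\sigma<\chi$, producing strictly positive gain per iteration and allowing the scheme to converge with a bounded power of $\Theta_{q_0}(B_R)$ in the final estimate.
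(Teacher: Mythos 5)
Your strategy is sound and uses exactly the ingredients the paper relies on: the Sobolev inequality of Theorem~\ref{Sobolev}, H\"older's inequality with $\Theta_{q_0}$ to control the non-uniform ellipticity, and Moser's iteration closed by an absorption lemma. The difference from the paper's argument is in where the interpolation occurs. You interpolate $\|w\|_{L^{2\sigma}}$ between $L^2$ and $L^{2\chi}$ at \emph{each} Caccioppoli step, so that every iteration gains the full Sobolev exponent $\chi=n/(n-1)$ starting from $L^2$. The paper instead iterates directly from $L^{2q_0/(q_0-1)}$ to $L^{2n/(n-1)}$, that is with the smaller gain $\frac{n(q_0-1)}{q_0(n-1)}>1$, obtaining $\|\tilde v\|_{L^\infty}$ in terms of $\|\tilde v\|_{L^{2q_0/(q_0-1)}}$, and only performs a single interpolation against $\|\tilde v\|_{L^1}$ and $\|\tilde v\|_{L^\infty}$ at the very end, followed by Lemma~\ref{lem4.3}. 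Your arrangement is in the spirit of Trudinger's treatment of non-uniformly elliptic equations and is arguably more transparent about the mechanism $q_0>n\Leftrightarrow\sigma<\chi$; the paper's is simpler per step because it avoids the $\epsilon$-absorption inside the iteration. Both close and give the stated power of $\Theta_{q_0}$ up to factors of $2$.

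Two technical points you should make explicit. First, your absorption step ``applying Young's inequality\ldots absorbs the $L^{2\chi}$-portion into the left side'' is not literally a rearrangement: after Sobolev and the cutoff, the left-hand side is $\|w\eta\|_{L^{2\chi}}$, which only controls $\|w\|_{L^{2\chi}(B_r)}$ on the smaller ball, whereas the interpolated term is $\|w\|_{L^{2\chi}(B_R)}$ on the larger ball, so the two cannot be compared pointwise. You need the standard hole-filling iteration on radii (precisely the content of Lemma~\ref{lem4.3}, applied to $t\mapsto\|w\|_{L^{2\chi}(B_t)}^2$ on $[r,R]$) to obtain the reverse H\"older estimate you then feed into Moser iteration. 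Second, testing \eqref{eq} against $\varphi=\eta^2\bar v^\beta$ is not admissible for unbounded $v\in W^{1,2}$ without a truncation: the paper introduces $\tilde v_m$ and tests with $\eta^2(\tilde v_m^\beta\tilde v-K^{\beta+1})$, sending $m\to\infty$ afterwards. This is routine but must be stated for the computation to be rigorous.
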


\begin{proof}
We first consider $\Omega=B_1$, $p=1$, $R=1$ and $r=\frac{1}{2}$. 

For some $K>0$ and $m>0$, set $\tilde{v}=v^++K$ and
$$\tilde{v}_m=\left\{\begin{array}{lc}\tilde{v},&\ v<m,\\ K+m,&\ v\geq m.\end{array}\right.$$
Let $\eta\in C^\infty_0(B_1)$. We choose the test function
$\varphi=\eta^2(\tilde{v}_m^\beta \tilde{v}-K^{\beta+1})\in W^{1,p}_0(B_1)$
for some $\beta\geq 0$ to be determined later. Substituting $\varphi$ into \eqref{eq}, we have
\begin{eqnarray*}
&&\beta\int_{B_1}H_k^{i\bar j}D_i\tilde{v}_m D_{\bar j}\tilde{v}_m \tilde{v}_m^{\beta} \eta^2\, d\mu_0+\int_{B_1}H_k^{i\bar j}D_i\tilde{v} D_{\bar j}\tilde{v}\,\tilde{v}_m^\beta \eta^2\, d\mu_0\\
&=&\beta\int_{B_1}H_k^{i\bar j}D_i\tilde{v} D_{\bar j}\tilde{v}_m \tilde{v}_m^{\beta-1} \tilde{v}\eta^2\, d\mu_0+\int_{B_1}H_k^{i\bar j}D_i\tilde{v} D_{\bar j}\tilde{v}\,\tilde{v}_m^\beta \eta^2\, d\mu_0\\[4pt]
&\leq& 2\int_{B_1} |H_k^{i\bar j} D_i\tilde{v} D_{\bar j}\eta|  \tilde{v}_m^\beta \tilde{v}  \eta \, d\mu_0+\int_{B_1} \left(|c|\eta^2 \tilde{v}_m^\beta \tilde{v}^2+|f|\eta^2 \tilde{v}_m^\beta \tilde{v}\right)\, d\mu_0.
\end{eqnarray*}
Note that $D\tilde{v}=D\tilde{v}_m$ in $\{v<m\}$ and $D\tilde{v}_m=0$ in $\{v\geq m\}$. 
By Cauchy's inequality, we have
\[ 2\int_{B_1} |H_k^{i\bar j} D_i\tilde{v} D_{\bar j}\eta| \tilde{v}_m^\beta \tilde{v}  \eta \, d\mu_0\leq \frac{1}{2}\int_{B_1} H_k^{i\bar j} D_i\tilde{v} D_{\bar j}\tilde{v} \, \tilde{v}_m^\beta \eta^2\, d\mu_0+4\int_{B_1} H_k^{i\bar j}D_i\eta D_{\bar j} \eta \tilde{v}_m^\beta \tilde{v}^2\, d\mu_0.\]
It follows
\begin{eqnarray}
&&\beta\int_{B_1}H_k^{i\bar j}D_i\tilde{v}_m D_{\bar j}\tilde{v}_m \tilde{v}_m^{\beta} \eta^2\, d\mu_0+\frac{1}{2}\int_{B_1}H_k^{i\bar j}D_i\tilde{v} D_{\bar j}\tilde{v}\, \tilde{v}_m^\beta \eta^2\, d\mu_0\nonumber\\[4pt]
&\leq& 4\int_{B_1} H_k^{i\bar j}D_i\eta D_{\bar j} \eta \tilde{v}_m^\beta \tilde{v}^2\, d\mu_0+\int_{B_1} \left(|c|\eta^2 \tilde{v}_m^\beta \tilde{v}^2+|f|\eta^2 \tilde{v}_m^\beta \tilde{v}\right)\, d\mu_0.\label{ineq1}
\end{eqnarray}
Let $w=\tilde{v}_m^{\frac{\beta}{2}}\tilde{v}$. Then 
\[D_iw=\frac{\beta}{2}\tilde{v}_m^{\frac{\beta}{2}-1}\tilde{v} D_i\tilde{v}_m+\tilde{v}_m^{\frac{\beta}{2}}D_i\tilde{v}=\frac{\beta}{2}\tilde{v}_m^{\frac{\beta}{2}} D_i\tilde{v}_m+\tilde{v}_m^{\frac{\beta}{2}}D_i\tilde{v}\]
and
\begin{eqnarray*}
H_k^{i\bar j}D_i(w\eta)D_{\bar j}(w\eta)&=&\eta^2H_k^{i\bar j}(\frac{\beta}{2}\tilde{v}_m^{\frac{\beta}{2}} D_i\tilde{v}_m+\tilde{v}_m^{\frac{\beta}{2}}D_i\tilde{v})(\frac{\beta}{2}\tilde{v}_m^{\frac{\beta}{2}} D_{\bar j}
\tilde{v}_m+\tilde{v}_m^{\frac{\beta}{2}}D_{\bar j}\tilde{v})+ w^2H_k^{i\bar j}D_i\eta D_{\bar j}\eta\\
&&+\eta wH_k^{i\bar j}D_i\eta D_{\bar j}w+\eta wH_k^{i\bar j}D_{\bar j}\eta D_iw\\
&=&\eta^2\tilde{v}_m^{\beta}[(\frac{\beta^2}{4}+\beta) H_k^{i\bar j}D_i\tilde{v}_m D_{\bar j}
\tilde{v}_m+H_k^{i\bar j}D_i\tilde{v}D_{\bar j}\tilde{v}]+ \tilde{v}_m^\beta \tilde{v}^2H_k^{i\bar j}D_i\eta D_{\bar j}\eta\\
&&+\eta wH_k^{i\bar j}D_i\eta (\frac{\beta}{2}\tilde{v}_m^{\frac{\beta}{2}} D_{\bar j}\tilde{v}_m+\tilde{v}_m^{\frac{\beta}{2}}D_{\bar j}\tilde{v})+\eta wH_k^{i\bar j}D_{\bar j}\eta (\frac{\beta}{2}\tilde{v}_m^{\frac{\beta}{2}} D_i\tilde{v}_m+\tilde{v}_m^{\frac{\beta}{2}}D_i\tilde{v})\\
&\leq &\eta^2\tilde{v}_m^{\beta}[(\frac{3\beta^2}{4}+\beta) U^{i\bar j}D_i\tilde{v}_m D_{\bar j}
\tilde{v}_m+2H_k^{i\bar j}D_i\tilde{v}D_{\bar j}\tilde{v}]+4 \tilde{v}_m^\beta \tilde{v}^2H_k^{i\bar j}D_i\eta D_{\bar j}\eta.
\end{eqnarray*}
Hence, by \eqref{ineq1}, we have
\begin{eqnarray}
&&\int_{B_1}H_k^{i\bar j}D_i(w\eta)D_{\bar j}(w\eta)\,d\mu_0\nonumber\\
&\leq&4(\beta+1)
\left[\beta\int_{B_1}H_k^{i\bar j}D_i\tilde{v}_m D_{\bar j}\tilde{v}_m \tilde{v}_m^{\beta} \eta^2\, d\mu_0+\frac{1}{2}\int_{B_1}H_k^{i\bar j}D_i\tilde{v} D_{\bar j}\tilde{v}\, \tilde{v}_m^\beta \eta^2\, d\mu_0\right]\nonumber\\
&&+
\int_{B_1}4 \tilde{v}_m^\beta \tilde{v}^2H_k^{i\bar j}D_i\eta D_{\bar j}\eta\,d\mu_0\nonumber\\
&\leq&4(4\beta+5)
\int_{B_1}H_k^{i\bar j}D_i\eta D_{\bar j}\eta\tilde{v}_m^\beta \tilde{v}^2\, d\mu_0+4(\beta+1)\int_{B_1} \left(|c|\eta^2 \tilde{v}_m^\beta \tilde{v}^2+|f|\eta^2 \tilde{v}_m^\beta \tilde{v}\right)\, d\mu_0\nonumber\\
&\leq& 4(4\beta+5)
\int_{B_1}\mathcal U |D\eta|^2 \tilde{v}_m^\beta \tilde{v}^2\, d\mu_0+4(\beta+1)\int_{B_1} c_0\eta^2 \tilde{v}_m^\beta \tilde{v}^2\, d\mu_0,\label{ineq2}
\end{eqnarray}
where $c_0=|c|+\frac{|f|}{K}$.
Choose $K=\|f\|_{L^q(B_1)}$ if $f$ is not identically 0. Otherwise choose arbitrary $K>0$ and let $K\to 0^+$.
By H{\"o}lder's inequality, we have
\begin{eqnarray}
\int_{B_{1}} \mathcal U|D\eta|^2w^2\,dx&\leq&\|\mathcal U\|_{L^{q_0}(B_1)}\cdot\|wD\eta\|_{L^{\frac{2q_0}{q_0-1}}(B_1)}^2,\label{norm-est-1}\\[3pt]
\int_{B_1}c_0w^2\eta^2\,dx&\leq&\|c_0\|_{L^{q}(B_1)}\cdot\left(\int_{B_1}(\eta w)^{\frac{2q}{q-1}}\,dx\right)^{1-\frac{1}{q}}.\label{norm-est-3}
\end{eqnarray}

Assume $q\geq q_0$.
Combining \eqref{ineq2}-\eqref{norm-est-3},  we get
\begin{eqnarray*}
\left(\int_{B_1}H_k^{i\bar j}D_i(w\eta)D_{\bar j}(w\eta)\,d\mu_0\right)^\frac{1}{2}&\leq& C\|\mathcal U\|_{L^{q_0}(B_1)}^{\frac{1}{2}}(1+\beta)^{\frac{1}{2}}\left(\|wD\eta\|_{L^{\frac{2q_0}{q_0-1}}(B_1)}+\|w\eta\|_{L^{\frac{2q}{q-1}}(B_1)}\right)\\[4pt]
&\leq& C\|\mathcal U\|_{L^{q_0}(B_1)}^{\frac{1}{2}}(1+\beta)^{\frac{1}{2}}\left(\|wD\eta\|_{L^{\frac{2q_0}{q_0-1}}(B_1)}+\|w\eta\|_{L^{\frac{2q_0}{q_0-1}}(B_1)}\right)
\end{eqnarray*}
for $C$ depending on $\lambda$, $\Lambda$, $q_0$, $q$ and $\|c\|_{L^q(\Omega)}$. 
By Sobolev inequality \eqref{sob-ineq},
\begin{equation}\label{re-Holder}
\|w\eta\|_{L^\frac{2n}{n-1}(B_1)}\leq C\|\mathcal U\|_{L^{q_0}(B_1)}^{\frac{1}{2}}(1+\beta)^{\frac{1}{2}}\left(\|wD\eta\|_{L^{\frac{2q_0}{q_0-1}}(B_1)}+\|w\eta\|_{L^{\frac{2q_0}{q_0-1}}(B_1)}\right),
\end{equation}
If $q<q_0$, we can replace $\frac{2q_0}{q_0-1}$ by $\frac{2q}{q-1}$ and the following iteration argument will be similar.

Now for any $0<r<R\leq 1$, we choose the  cutoff function $\eta\in C_0^\infty(B_R)$ with the property
$$0\leq\eta\leq 1,\quad\eta\equiv 1\,\,\text{in}\,\, B_r\quad\text{and}\quad |D\eta|\leq \frac{2}{R-r}.$$
Then we obtain
$$\|w\|_{L^\frac{2n}{n-1}(B_r)}\leq \frac{C\|\mathcal U\|_{L^{q_0}(B_1)}^{\frac{1}{2}}(1+\beta)^{\frac{1}{2}}}{R-r}\|w\|_{L^{\frac{2q_0}{q_0-1}}(B_R)}.$$
We can do the iteration as follows.

Recalling the definition of $w$, we have
$$\|\tilde{v}_m^{\frac{\beta}{2}}\tilde{v}\|_{L^\frac{2n}{n-1}(B_r)}\leq\frac{C\|\mathcal U\|_{L^{q_0}(B_1)}^{\frac{1}{2}}(1+\beta)^{\frac{1}{2}}}{R-r}\|\tilde{v}_m^{\frac{\beta}{2}}\tilde{v}\|_{L^{\frac{2q_0}{q_0-1}}(B_R)}.$$
Set $\gamma=\beta+2\geq 2$. 
By $\tilde{v}_m\leq \tilde{v}$, we obtain
$$\|\tilde{v}_m^{\frac{\gamma}{2}}\|_{L^\frac{2n}{n-1}(B_r)}\leq\frac{C\|\mathcal U\|_{L^{q_0}(B_1)}^{\frac{1}{2}}\gamma^{\frac{1}{2}}}{R-r}\|\tilde{v}^{\frac{\gamma}{2}}\|_{L^{\frac{2q_0}{q_0-1}}(B_R)}.$$
Letting $m\to\infty$, we get
$$\|\tilde{v}^{\frac{\gamma}{2}}\|_{L^\frac{2n}{n-1}(B_r)}\leq\frac{C\|\mathcal U\|_{L^{q_0}(B_1)}^{\frac{1}{2}}\gamma^{\frac{1}{2}}}{R-r}\|\tilde{v}^{\frac{\gamma}{2}}\|_{L^{\frac{2q_0}{q_0-1}}(B_R)},$$
i.e.,
$$\|\tilde{v}\|_{L^{{\frac{\gamma}{2}}\frac{2n}{n-1}}(B_r)}\leq\frac{(C\|\mathcal U\|_{L^{q_0}(B_1)}^{\frac{1}{2}}\gamma)^{\frac{1}{\gamma}}}{(R-r)^{\frac{2}{\gamma}}}\|\tilde{v}\|_{L^{{\frac{\gamma}{2}}\frac{2q_0}{q_0-1}}(B_R)}.$$
Denote $\chi=\frac{2n}{n-1}\cdot\frac{q_0-1}{2q_0}>1$. Then
\begin{equation}\label{itt}
\|\tilde{v}\|_{L^{{\frac{{\gamma}q_0}{q_0-1}\chi}}(B_r)}\leq\frac{(C\|\mathcal U\|_{L^{q_0}(B_1)}^{\frac{1}{2}}\gamma)^{\frac{1}{\gamma}}}{(R-r)^{\frac{2}{\gamma}}}\|\tilde{v}\|_{L^{{\frac{{\gamma}q_0}{q_0-1}}}(B_R)}.
\end{equation}
We iterate \eqref{itt} to get the desired estimate. Set
$$\gamma_i=2\chi^{i}\quad\text{and}\quad R_i=r+\frac{R-r}{2^{i}},\  i=0,1,2,\cdots,$$
i.e.,
$$\gamma_i=\chi\gamma_{i-1}\ \text{and} \ R_{i-1}-R_i=\frac{R-r}{2^{i}},\  i=1,2,\cdots.$$
By \eqref{itt},
$$\|\tilde{v}\|_{L^{\frac{2q_0}{q_0-1}\chi^{i+1}}(B_{R_{i+1}})}
\leq (C\|\mathcal U\|_{L^{q_0}(B_1)}^{\frac{1}{2}})^{\sum_{j=0}^i\frac{1}{\gamma_j}}\cdot\prod_{j=0}^i \gamma_j^{\frac{1}{\gamma_j}}\cdot 4^{\sum_{j=0}^i\frac{j}{\gamma_j}}\frac{1}{(R-r)^{\sum_{j=0}^i\frac{2}{\gamma_j}}}\cdot\|\tilde{v}\|_{L^{\frac{2q_0}{q_0-1}}(B_R)}.$$
Letting $i\to\infty$, by Young's inequality, we have
$$\begin{aligned}
\|\tilde{v}\|_{L^\infty(B_r)}&\leq \frac{C\|\mathcal U\|_{L^{q_0}(B_1)}^{\frac{1}{4}\frac{\chi}{\chi-1}}}{(R-r)^{\frac{\chi}{\chi-1}}}\|\tilde{v}\|_{L^{\frac{2q_0}{q_0-1}}(B_R)}\\[4pt]
&=\frac{C\|\mathcal U\|_{L^{q_0}(B_1)}^{\frac{1}{4}\frac{\chi}{\chi-1}}}{(R-r)^{\frac{\chi}{\chi-1}}}\|\tilde{v}\|_{L^{1}(B_R)}^{\frac{q_0-1}{2q_0}}\cdot\|\tilde{v}\|_{L^{\infty}(B_R)}^{\frac{q_0+1}{2q_0}}\\[4pt]
&\leq \frac{1}{2}\|\tilde{v}\|_{L^{\infty}(B_R)}+\frac{C\|\mathcal U\|_{L^{q_0}(B_1)}^{\frac{\delta}{4}}}{(R-r)^{\delta}}\|\tilde{v}\|_{L^1(B_R)}.
\end{aligned}$$
where 
$\delta=\frac{\chi}{\chi-1}\cdot\frac{2q_0}{q_0-1}=\frac{q_0n}{q_0-n}$.
Set $f(t)=\|\tilde{v}\|_{L^\infty(B_t)}$ for $t\in (0,1].$ Then for any $0<r<R\leq 1$
$$f(r)\leq \frac{1}{2}f(R)+\frac{C\|\mathcal U\|_{L^{q_0}(B_1)}^{\frac{\delta}{4}}}{(R-r)^{\delta}}\|\tilde{v}\|_{L^1(B_1)}.$$
We apply  Lemma \ref{lem4.3} to get
$$f(r)\leq \frac{C\|\mathcal U\|_{L^{q_0}(B_1)}^{\frac{\delta}{4}}}{(R-r)^{\delta}}\|\tilde{v}\|_{L^1(B_1)}.$$
By choosing $r=\frac{1}{2}$, $R=1$, we have
\[\sup_{B_{1/2}}v\leq C\|\mathcal U\|_{L^{q_0}(B_1)}^{\frac{\delta}{4}}(\|v\|_{L^1(B_1)}+K)=C\|\mathcal U\|_{L^{q_0}(B_1)}^{\frac{\delta}{4}}(\|v\|_{L^1(B_1)}+\|f\|_{L^q(B_1)}).\]

The general case of  $R$, $r$, $p$ can be obtained by scaling and H\"older's  inequality.
For general $R>0$, define $\hat v(z)=v(Rz)$. It is clear that $\hat v$ satisfies
\[
\int_{B_1} \hat H_k^{i\bar j}D_{i}\hat v D_{\bar j}\varphi+\hat c\hat v\varphi\, d\mu_0\leq \int_{B_1} \hat f\varphi\,d\mu_0\quad \text{for\,\,any}\,\, \varphi\in W^{1,2}_0(B_1)\text{\ and}\,\,\varphi\geq 0
\]
where
\[\hat H_k^{i\bar j}(z)=H_k^{i\bar j}(Rz),\ \hat c(z)=R^2c(Rz), \ \hat f(z)=R^2f(Rz).\]
Applying the above estimate to $\hat v$ on $B_1$, we have
\[\sup_{B_{R/2}}v\leq C\lp\Theta_{q_0}(B_R) \rp^{\frac{\delta}{4}}\cdot (R^{-2n}\|v\|_{L^1(B_R)}+R^{2-\frac{2n}{q}}\|f\|_{L^q(B_R)}),\]
where $\Theta_{q_0}(B_R)=R^{-\frac{2n}{q_0}}\|\mathcal U\|_{L^{q_0}(B_R)}$.
Then by similar arguments as in \cite{HL} with H\"older's inequality, we finally obtain  \eqref{loc-b}
for general $R$, $r$, $p$.
\end{proof}

\begin{lem}[{\cite[Lemma 4.3]{HL}}]\label{lem4.3}
Let $f(t)\geq 0$ be bounded in $[\tau_0,\tau_1]$ with $\tau_0\geq 0$. Suppose for $\tau_0\leq t<s\leq \tau_1$ we have
$$f(t)\leq \beta f(s)+\frac{A}{(s-t)^\alpha}+B$$
for some $\beta\in[0,1)$. Then for any $\tau_0\leq t<s\leq\tau_1$ there holds
$$f(t)\leq C(\alpha,\beta)\left\{\frac{A}{(s-t)^\alpha}+B\right\}.$$
\end{lem}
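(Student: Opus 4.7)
The plan is to prove this by an iteration/absorbing argument: construct a geometric sequence of points between $t$ and $s$, iterate the hypothesized inequality along this sequence, and use the boundedness of $f$ together with $\beta<1$ to eliminate the $f$ term in the limit.

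Concretely, fix $\tau_0\le t<s\le\tau_1$ and choose a parameter $\theta\in(0,1)$ that I will fix later so that $\theta^\alpha<1$ but $\beta/\theta^\alpha<1$. Define the increasing sequence $t_0=t$ and
\[
t_{i+1}=t_i+(1-\theta)\theta^{i}(s-t),\qquad i=0,1,2,\dots,
\]
so that $t_i\uparrow s$ and $t_{i+1}-t_i=(1-\theta)\theta^{i}(s-t)$. Applying the hypothesis to the pair $(t_i,t_{i+1})$ gives
\[
f(t_i)\le \beta f(t_{i+1})+\frac{A}{(1-\theta)^\alpha\theta^{i\alpha}(s-t)^\alpha}+B.
\]
Iterating this bound $k$ times yields
\[
f(t)\le \beta^{k}f(t_k)+\sum_{i=0}^{k-1}\beta^{i}\!\left(\frac{A}{(1-\theta)^\alpha\theta^{i\alpha}(s-t)^\alpha}+B\right).
\]

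Now I would choose $\theta$ so that $\beta<\theta^\alpha<1$; for instance $\theta=\bigl(\tfrac{1+\beta}{2}\bigr)^{1/\alpha}$ works, giving a ratio $\beta/\theta^\alpha=2\beta/(1+\beta)<1$. With this choice, the geometric series $\sum_{i\ge0}(\beta/\theta^\alpha)^{i}$ converges to a constant depending only on $\alpha$ and $\beta$, and $\sum_{i\ge0}\beta^{i}=1/(1-\beta)$. Since $f$ is bounded on $[\tau_0,\tau_1]$ and $\beta<1$, the term $\beta^k f(t_k)$ tends to $0$ as $k\to\infty$. Passing to the limit gives
\[
f(t)\le \frac{A}{(1-\theta)^\alpha(s-t)^\alpha}\cdot\frac{1}{1-\beta/\theta^\alpha}+\frac{B}{1-\beta}=C(\alpha,\beta)\!\left\{\frac{A}{(s-t)^\alpha}+B\right\},
\]
which is the claimed bound.

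The only subtle point is the choice of $\theta$: one needs $\theta^\alpha>\beta$ to make the geometric series for the $A$-term summable, while simultaneously $\theta<1$ to ensure $t_k\to s$ inside $[\tau_0,\tau_1]$. Both are met by any $\theta\in(\beta^{1/\alpha},1)$, so there is no real obstacle. The boundedness hypothesis on $f$ is used exactly once, to kill the tail $\beta^k f(t_k)$; without it the argument would fail. The resulting constant $C(\alpha,\beta)$ is explicit up to the choice of $\theta$, and clearly blows up as $\beta\uparrow 1$, reflecting the role of the strict contraction.
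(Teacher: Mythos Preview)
Your argument is correct and is exactly the standard iteration proof of this lemma. The paper does not supply its own proof---it simply cites \cite[Lemma~4.3]{HL}---and the proof in \cite{HL} proceeds by the same geometric sequence $t_{i+1}=t_i+(1-\theta)\theta^i(s-t)$ with $\theta\in(\beta^{1/\alpha},1)$, iterating and summing the resulting geometric series; so your write-up matches the intended reference.
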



\vskip 15pt

\subsection{Weak Harnack inequality}
In order to establish a weak Harnack inequality, the following Poincare type inequality is needed.
\begin{lem}\label{p-ineq}
Let $\Omega\subset\mathbb C^n$ be a bounded domain and $u\in C^2(\bar\Omega)$ a smooth, strictly $k$-plurisubharmonic function($2\leq k\leq n$) satisfying \eqref{low-upp}. 
Let $B_R\subset\Omega$ be a ball with radius $R$. Then for $1\leq p\leq \frac{2n}{2n-1}$,
\beq\label{poin-eq1}
\|v-v_R\|_{L^p(B_R)}\leq C\| \mathcal U\|_{L^{n-1}(B_R)}^\frac{n-1}{2}\cdot \lp\int_{B_R}H_k^{i\bar j}v_i\bar v_{j}\,\omega_0^n\rp^{\frac{1}{2}}, \ v\in W^{1,1}(B_R),
\eeq
where $v_R=\frac{1}{|B_R|}\int_{B_R}v\,\omega_0^n$ and $C$ depends on $k$, $n$, $R$, $p$, $\lambda$, $\Lambda$.
\end{lem}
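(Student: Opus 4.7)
The plan is to combine a pointwise bound relating $|\nabla v|^2$ to the linearized energy density $H_k^{i\bar j}v_i\bar v_j$ with the endpoint Sobolev--Poincar\'e inequality on the real ball $B_R\subset \mathbb R^{2n}$. Since the desired range of $p$ has $p\le \frac{2n}{2n-1}$, an application of H\"older's inequality on $B_R$ (picking up a positive power of $|B_R|\sim R^{2n}$) will reduce the problem to the endpoint case $p_*=\frac{2n}{2n-1}$, so I would focus on proving that case.

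For the pointwise step, at each point I would simultaneously diagonalize $(u_{i\bar j})$ and $(H_k^{i\bar j})$ by a unitary change of frame: if $\lambda_1,\ldots,\lambda_n$ are the eigenvalues of $(u_{i\bar j})$, then $(H_k^{i\bar j})$ is diagonal with eigenvalues $\mu_i=\sigma_{k-1}(\lambda|i)$, and $\mathcal{U}=\sum_i\mu_i=(n-k+1)H_{k-1}[u]$. The crude bound
\[
|\nabla v|^2\le C\sum_i|v_i|^2\le \frac{C}{\mu_{\min}}\sum_i\mu_i|v_i|^2=\frac{C}{\mu_{\min}}\,H_k^{i\bar j}v_i\bar v_j
\]
then reduces matters to showing $\mu_{\min}\ge c\,\mathcal{U}^{-(n-1)}$. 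Writing $\mu_{\min}=\prod_i\mu_i\big/\prod_{j\ne i_{\min}}\mu_j$, the arithmetic--geometric mean inequality bounds the denominator by $(\mathcal{U}/(n-1))^{n-1}$, so what remains is a uniform positive lower bound on $\prod_i\mu_i$. This is delivered by the ``fundamental inequality'' $\prod_i H_k^{i\bar i}\ge C_{n,k}H_{k-1}^{n(k-2)/(k-1)}$ already used in the proof of Theorem~\ref{main2}, together with the Newton--Maclaurin inequality, which under $H_k[u]\ge \lambda$ gives $H_{k-1}[u]\ge c_{n,k}\lambda^{(k-1)/k}>0$. Combining these yields the pointwise estimate $|\nabla v|^2\le C\mathcal{U}^{n-1}H_k^{i\bar j}v_i\bar v_j$.

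Taking square roots, integrating over $B_R$, and applying Cauchy--Schwarz produces
\[
\int_{B_R}|\nabla v|\,d\mu_0\le C\int_{B_R}\mathcal{U}^{(n-1)/2}\bigl(H_k^{i\bar j}v_i\bar v_j\bigr)^{1/2}d\mu_0\le C\,\|\mathcal{U}\|_{L^{n-1}(B_R)}^{(n-1)/2}\left(\int_{B_R}H_k^{i\bar j}v_i\bar v_j\,\omega_0^n\right)^{\!1/2}.
\]
Combined with the classical endpoint Sobolev--Poincar\'e inequality on $B_R\subset\mathbb R^{2n}$, namely $\|v-v_R\|_{L^{2n/(2n-1)}(B_R)}\le C\|\nabla v\|_{L^1(B_R)}$, this proves \eqref{poin-eq1} at the endpoint $p_*$; the inequality at general $p\in [1,p_*]$ then follows from H\"older's inequality on $B_R$, which accounts for the $R$-dependence absorbed into the constant.

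The main obstacle I expect is the pointwise eigenvalue bound, more precisely securing the uniform lower bound on $\prod_i\mu_i$. The exponent $n-1$ on $\mathcal{U}$ (and correspondingly the $L^{n-1}$ integrability appearing in the statement) is pinned exactly by the AM--GM step applied to the $n-1$ non-minimal eigenvalues, so any weakening of the positive lower bound on $\prod_i\mu_i$---which leans on both the strict $k$-plurisubharmonicity and the two-sided bound~\eqref{low-upp}---would immediately degrade this exponent.
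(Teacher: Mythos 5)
Your proposal is correct and follows essentially the same route as the paper: reduce to the $L^1$ gradient bound via the classical Sobolev--Poincar\'e inequality on the ball, then control $\|\nabla v\|_{L^1}$ by $\bigl(\int H_k^{i\bar j}v_i\bar v_j\bigr)^{1/2}$ via Cauchy--Schwarz, using AM--GM together with Wang's fundamental inequality to get the pointwise eigenvalue bound $1/H_k^{i\bar i}\le C\mathcal U^{n-1}$ (the paper invokes $\prod_i\partial\sigma_k/\partial\lambda_i\ge C_{n,k}\sigma_k^{n(k-1)/k}$ directly rather than passing through $H_{k-1}$ and Newton--Maclaurin, but this is a cosmetic difference).
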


\begin{proof}
By the usual Poincare inequality
\[\|v-v_R\|_{L^1(B_R)}\leq C\|Dv\|_{L^1(B_R)},\ v\in W^{1,1}(B_R).\]
Applying the usual Sobolev inequality to 
$v-v_R$, we have for $1\leq p\leq \frac{2n}{2n-1}$,
\[\|v-v_R\|_{L^p(B_R)}\leq C(\|Dv\|_{L^1(B_R)}+\|v-v_R\|_{L^1(B_R)})\leq C\|Dv\|_{L^1(B_R)}\]
for $v\in W^{1,1}(B_R)$.
By an elementary inequality
\[\|Dv\|_{L^1(B_R)}\leq \lp\int_{B_R}\sum_{i=1}^n\frac{1}{H_k^{i\bar i}}\,\omega_0^n\rp^{\frac{1}{2}} \lp\int_{B_R}H_k^{i\bar j}v_i\bar v_{j}\,\omega_0^n\rp^{\frac{1}{2}}.\]
Note that \cite{W} 
\[\prod_{i=1}^n\frac{\partial\sigma_k(\lambda)}{\partial \lambda_i}\geq C_{n,k}\sigma_k^{\frac{n(k-1)}{k}},\ \lambda\in\Gamma_k,\]
i.e., 
\[\frac{\partial\sigma_k(\lambda)}{\partial \lambda_i}\leq C^{-1}_{n,k}\sigma_k^{-\frac{n(k-1)}{k}} \prod_{j\neq i}^n\frac{\partial\sigma_k(\lambda)}{\partial \lambda_i}\leq C^{-1}_{n,k}\sigma_k^{-\frac{n(k-1)}{k}} \lp\frac{1}{n-1}\sum_{j\neq i}^n\frac{\partial\sigma_k(\lambda)}{\partial \lambda_i}\rp^{n-1}.\]
Then by \eqref{low-upp}, we know
$\frac{1}{H_k^{i\bar i}}\leq C\mathcal U^{n-1}$.
\end{proof}

\begin{rem}
Note that when $k=n$ and $u$ satisfies 
\beq\label{det-ul}
0<\lambda\leq \det(u_{i\bar j})\leq \Lambda,
\eeq
we have
$$\int_{B_R} \sum_i\frac{1}{U^{i\bar i}}\,\omega_0^n\leq \int_{B_R}C_n\triangle u\,\omega_0^n\leq \int_{\Omega}\triangle u\,\omega_0^n=\int_{\partial\Omega}\frac{\partial u}{\partial \gamma}\,d\sigma,$$
where $\gamma$ is the unit outer normal of $\partial\Omega$.
Hence, the constant in \eqref{poin-eq1} depends on $k$, $n$, $R$, $p$, $\lambda$, $\Lambda$ and $u|_{\partial\Omega}$.
\end{rem}

Now we are ready to prove the weak Harnack ineqaulity.

\begin{theo}[Weak Harnack inequality]\label{weak-har2}
Assume $u$ is a smooth, strictly $k$-plurisubharmonic function($2\leq k\leq n$) that satisfies \eqref{low-upp}. Assume $ f(z)\in L^q(B_1)$ with $q>n$.
Let $v\in W^{1,2}(B_1)$ be a nonnegative supersolution to 
\eqref{com-div-1}
in the following sense:
\begin{equation}\label{eq3}
	\int_{B_1} H_k^{i\bar j}D_{i}v D_{\bar j}\varphi\, d\mu_0\geq \int_{B_1} f\varphi\,d\mu_0\quad \text{for\,\,any}\,\, \varphi\in W^{1,2}_0(B_1)\text{\ and}\,\,\varphi\geq 0.
\end{equation}
Then any ball $B_R\subset B_1$ there holds for $0<p<\frac{n}{n-1}$, $n<q_0\leq q$ and $0<\theta<\tau<1$
\beq\label{w-har}
\inf_{B_{\theta R}}v+R^{2-\frac{2n}{q}}\|f\|_{L^{q}(B_R)}\geq C\cdot \Theta_{q_0}^{-\frac{\delta}{4}}(B_{\tau R})\cdot(\tau-\theta)^{\frac{2n}{p}}\left(\frac{1}{R^{2n}}\int_{B_{\tau R}}v^p\,d\mu_0\right)^{
	\frac{1}{p}},
\eeq
where $C$ depends only on $k$, $n$, $p$, $q_0$, $\lambda$, $\Lambda$, and $\|f\|_{L^q(B_1)}$.
\end{theo}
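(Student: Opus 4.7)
The plan is to run a Moser-type iteration for positive supersolutions, with the only genuinely new difficulty coming from the fact that Lemma \ref{p-ineq} is a $(\tfrac{2n}{2n-1},2)$-Poincar\'e rather than the usual $(2,2)$-Poincar\'e; this is precisely what prevents a direct invocation of the classical John--Nirenberg lemma. First I normalize by setting $\bar v = v + k$ with $k = R^{2-\frac{2n}{q}}\|f\|_{L^q(B_R)}$ (letting $k\downarrow 0$ if $f\equiv 0$), so that $\bar v$ is a strictly positive supersolution of an equation with a normalized source. For the \emph{negative-power iteration}, I test the weak supersolution inequality with $\varphi=\eta^2\bar v^{\beta}$ for $\beta<-1$; Cauchy--Schwarz absorbing the gradient of $\bar v$ together with Theorem \ref{Sobolev} produces, exactly as in the proof of Theorem \ref{thm-har}, the iteration inequality
\[
\|\bar v^{\beta/2}\|_{L^{\frac{2n}{n-1}}(B_r)}\le \frac{C\,\Theta_{q_0}(B_R)^{1/2}(1+|\beta|)^{1/2}}{R-r}\,\|\bar v^{\beta/2}\|_{L^{\frac{2q_0}{q_0-1}}(B_R)}.
\]
Iterating $|\beta|\to\infty$ and absorbing via Lemma \ref{lem4.3} then yields, for any $p_0>0$,
\[
\Bigl(\inf_{B_{\theta R}}\bar v\Bigr)^{-1}\le C\,\Theta_{q_0}^{\delta/4}(B_{\tau R})(\tau-\theta)^{-\frac{2n}{p_0}}\Bigl(\frac{1}{R^{2n}}\int_{B_{\tau R}}\bar v^{-p_0}\,d\mu_0\Bigr)^{1/p_0}.
\]

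For the \emph{positive-power iteration} I test with $\varphi=\eta^2(\bar v_m^{\beta-1}\bar v-K^{\beta})$ for $-1<\beta<0$, using a standard truncation $\bar v_m$ to justify the computation and letting $m\to\infty$. The same Caccioppoli--Sobolev machinery upgrades an $L^{p_0}$ bound to an $L^{p_1}$ bound whenever $0<p_0<p_1<\tfrac{n}{n-1}$, which is exactly where the restriction $p<\tfrac{n}{n-1}$ in \eqref{w-har} comes from: the Sobolev exponent $\tfrac{2n}{n-1}$ in Theorem \ref{Sobolev} caps the gain per step.

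The bridge between the two iterations is the logarithmic estimate. Choosing $\varphi=\eta^2\bar v^{-1}$ in the supersolution inequality produces the Caccioppoli-type bound
\[
\int_{B_R}\eta^2 H_k^{i\bar j}(\log\bar v)_i(\log\bar v)_{\bar j}\,d\mu_0\le C\!\int_{B_R}\mathcal U\,|\D\eta|^2\,d\mu_0+(\text{source terms}),
\]
and feeding this into Lemma \ref{p-ineq} with $w=\log\bar v$ gives $\|w-w_R\|_{L^{\frac{2n}{2n-1}}(B_R)}\le C R^{n-1}\|\mathcal U\|_{L^{n-1}(B_R)}^{(n-1)/2}$, i.e.\ $w$ lies in a weaker $L^{\frac{2n}{2n-1}}$-BMO class controlled by $\Theta_{q_0}$ on $B_R$. \emph{The main obstacle is precisely this step:} the classical John--Nirenberg argument needs a $(2,2)$-Poincar\'e, so I would instead iterate the Caccioppoli--Poincar\'e cycle by applying Lemma \ref{p-ineq} to powers $|w-w_R|^m$ (or to suitable truncations of $w$) and bootstrapping, successively improving the integrability from $L^{\frac{2n}{2n-1}}$ upward through a sequence of exponents. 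After finitely many rounds I expect to recover the exponential estimate
\[
\int_{B_R}e^{\alpha|w-w_R|}\,d\mu_0\le C|B_R|
\]
for some $\alpha=\alpha(k,n,\lambda,\Lambda,\Theta_{q_0})>0$, equivalently $\int_{B_R}\bar v^{\alpha}\cdot\int_{B_R}\bar v^{-\alpha}\le C|B_R|^2$. Combining this with the positive-power iteration (to reach any $p<\tfrac{n}{n-1}$) and the negative-power iteration (to pass from the $L^{-\alpha}$ moment to $\inf\bar v$), and scaling/covering to handle the interpolation between radii $\theta R$ and $\tau R$, produces the weak Harnack inequality \eqref{w-har} with the claimed dependence on $\tau-\theta$ and $\Theta_{q_0}$.
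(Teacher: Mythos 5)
Your proposal follows the paper's argument essentially step by step: same normalization $\bar v = v+k$ with $k$ proportional to the $L^q$-norm of $f$, same reduction of the negative-power side to the local boundedness theorem (Theorem \ref{thm-har}) applied to $\bar v^{-1}$, same reduction of the remaining gap to a cross-moment bound $\int\bar v^{p_0}\int\bar v^{-p_0}\le C$ via $w=\log\bar v$, and — crucially — the same correct diagnosis that the $(\tfrac{2n}{2n-1},2)$-Poincar\'e in Lemma \ref{p-ineq}, as opposed to a $(2,2)$-Poincar\'e, blocks the classical John--Nirenberg route, to be replaced by an iteration on powers of $|w|$. So at the level of strategy you have reproduced the paper.

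Two points need tightening. First, ``after finitely many rounds'' cannot yield the exponential estimate: finitely many bootstrap steps only give $w\in L^N$ for some finite $N$. What the paper actually proves — by a Moser iteration with test function $\zeta^2|w_m|^{2\beta}$, Caccioppoli absorption, and the Sobolev inequality \eqref{sob-ineq} applied to $\zeta|w|^\beta$, pushed through an \emph{unbounded} sequence of exponents $\beta_j\to\infty$ with constants controlled via Lemma \ref{lem4.3} — is the uniform linear moment bound $\|w\|_{L^\beta(B_\tau)}\le C_*\beta$ for every $\beta$. Only then does Stirling give $\int|w|^m\lesssim (C_*e)^m m!$, and summing the Taylor series yields $\int_{B_\tau}e^{p_0|w|}\,d\mu_0\le C$ with $p_0\sim 1/(C_*e)$; without the uniformity in $\beta$ this last step fails. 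Second, Lemma \ref{p-ineq} is invoked exactly once, to normalize the mean ($w$ has mean zero) and seed the iteration with the $L^{2n/(2n-1)}$ bound; the subsequent moment iteration runs entirely on the Sobolev inequality, since multiplying by the cutoff $\zeta$ restores zero boundary data. Repeatedly applying the Poincar\'e to $|w-w_R|^m$, as you suggest, is not what the paper does and would require a separate argument that each power is in $W^{1,1}$ with controlled gradient — possible, but not free. With these corrections your outline matches the proof.
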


\begin{proof}
We prove for $R=1$. 

First, we prove the result for some $p_0>0$.
Set $\hat v=v+K>0$ for some $K>0$ to be determined and $\tilde v=\hat v^{-1}$. For $\varphi\in W^{1,2}_0(B_1)$ with $\varphi\geq 0$, we choose $\hat v^{-2}\varphi$ as the test function in \eqref{eq3}. We have $\tilde v$ satisfies
\[\int_{B_1} H_k^{i\bar j}D_{i}v \frac{D_{\bar j}\varphi}{\hat v^2}\, d\mu_0-2
\int_{B_1} H_k^{i\bar j}D_{i}v D_{\bar j}\hat v\frac{\varphi}{\hat v^3}\, d\mu_0
\geq \int_{B_1} f\frac{\varphi}{\hat v^2}\,d\mu_0,\]
i.e.,
\[\int_{B_1} H_k^{i\bar j}D_{i}\tilde v D_{\bar j}\varphi\, d\mu_0+ \int_{B_1} \tilde f\tilde v\varphi\,d\mu_0\leq 0,\]
where $\tilde f=\frac{f}{\hat v}$. We choose $K=\|f\|_{L^{q}(B_1)}$ such that $\|\tilde f\|_{L^{q}(B_1)}\leq 1$.
By Theorem \ref{thm-har}, for any $0< \theta<\tau<1$ and any $p>0$
\[\sup_{B_\theta} \hat v^{-1}=\sup_{B_\theta} \tilde v\leq \lp \tau^{-\frac{2n}{q_0}}\|\cU\|_{L^{q_0}(B_{\tau})}\rp^{\frac{\delta}{4}}\cdot \frac{1}{(\tau-\theta)^{\frac{2n}{p}}}\lp\int_{B_{\tau}}\hat v^{-p}\,d\mu_0\rp^{\frac{1}{p}},\]
where $\delta=\frac{nq_0}{q_0-n}$. This implies
\beqs
	\inf_{B_\theta}\hat v&\geq & C\cdot \Theta_{q_0}^{-\frac{\delta}{4}}(B_{\tau})(\tau-\theta)^{\frac{2n}{p}}\lp \int_{B_{\tau}}\hat v^{-p}\rp^{-\frac{1}{p}}\cdot\lp\int_{B_\tau} \hat v^{-p}\,d\mu_0\rp^{-\frac{1}{p}} \\
&= & C\cdot \Theta_{q_0}^{-\frac{\delta}{4}}(B_{\tau})(\tau-\theta)^{\frac{2n}{p}}\cdot\lp\int_{B_\tau} \hat v^{-p}\,d\mu_0\int_{B_\tau} \hat v^{p}\,d\mu_0\rp^{-\frac{1}{p}}\cdot\lp\int_{B_\tau} \hat v^{p}\,d\mu_0\rp^{\frac{1}{p}}.
\eeqs
Here $C$ depends on $p$, $q_0$, $k$, $n$, $\lambda$, $\Lambda$, $\|f\|_{L^q(B_1)}$.
In order to obtain \eqref{w-har}, it  suffices to prove that there exists $p_0>0$ such that
\beq\label{suff-ineq}
\int_{B_\tau} \hat v^{-p_0}\,d\mu_0\int_{B_\tau} \hat v^{p_0}\,d\mu_0\leq C.
\eeq
We will establish \eqref{suff-ineq} by showing that 
\beq\label{exp-ineq}
\int_{B_\tau} e^{p_0|w|}\,d\mu_0\leq C,
\eeq
where $w=\log\hat v-\gamma$ with $\gamma=\frac{1}{|B_\tau|}\int_{B_\tau} \log\hat v\,d\mu_0$.
By \eqref{exp-ineq}, we have
\[\int_{B_\tau} e^{p_0(\log\hat v-\gamma)}\,d\mu_0\leq C, \ \ \int_{B_\tau} e^{-p_0(\log\hat v-\gamma)}\,d\mu_0\leq C,
\] 
which implies \eqref{suff-ineq}.

We first derive the equation for $w$. 
For $\varphi\in W^{1,2}_0(B_1)$ with $\varphi\geq 0$, we choose $\hat v^{-1}\varphi$ as the test function in \eqref{eq3}. By similar calculation as above, we have
\beq\label{w-eq}
\int_{B_1} H_k^{i\bar j}D_{i}w D_{\bar j}w\varphi\, d\mu_0
\leq \int_{B_1} H_k^{i\bar j}D_{i}w D_{\bar j}\varphi\, d\mu_0+ \int_{B_1} -\tilde f\varphi\,d\mu_0.
\eeq

Replacing $\varphi$ by $\varphi^2$ in \eqref{w-eq}, we have
\[\int_{B_1} H_k^{i\bar j}D_{i}w D_{\bar j}w\varphi^2\, d\mu_0
\leq \int_{B_1} 2\varphi H_k^{i\bar j}D_{i}w D_{\bar j}\varphi\, d\mu_0+ \int_{B_1} -\tilde f\varphi^2\,d\mu_0.\]
By the mean value inequality
\[\int_{B_1} 2\varphi H_k^{i\bar j}D_{i}w D_{\bar j}\varphi\, d\mu_0\leq \frac{1}{2}\int_{B_1} H_k^{i\bar j}D_{i}w D_{\bar j}w\varphi^2\, d\mu_0+4\int_{B_1} H_k^{i\bar j}D_{i}\varphi D_{\bar j}\varphi\, d\mu_0.\]
By $q>n$, its conjugate $q^*\leq \frac{n}{n-1}$, i.e.,  $2q^*\leq \frac{2n}{n-1}$. Then Sobolev inequality  \eqref{sob-ineq},
\[ \int_{B_1} -\tilde f\varphi^2\,d\mu_0\leq \|\tilde f\|_{L^{q}(B_1)} \cdot \|\varphi\|_{L^{2q^*}(B_1)}^2\leq C \int_{B_1} H_k^{i\bar j}D_{i}\varphi D_{\bar j}\varphi\, d\mu_0.\]
Hence we have
\beq\label{L2-est}
\int_{B_1} H_k^{i\bar j}D_{i}w D_{\bar j}w\varphi^2\, d\mu_0\leq 
C \int_{B_1} H_k^{i\bar j}D_{i}\varphi D_{\bar j}\varphi\, d\mu_0.
\eeq
Now we take $\varphi\in C_0^1(B_1)$ with $\varphi=1$ in $B_\tau$.
Then we have
\[\int_{B_\tau} H_k^{i\bar j}D_{i}w D_{\bar j}w\, d\mu_0\leq 
C,\]
where $C$ depends on $k$, $n$, $\lambda$, $\Lambda$, $q$, $\tau$, $\|\mathcal U\|_{L^{1}(B_1)}$ and $\|f\|_{L^q(B_1)}$.
Note that $\frac{1}{|B_\tau|}\int_{B_\tau} w\,d\mu_0=0$.
By Poincare inequality \eqref{poin-eq1}, we obtain
\beq\label{ineq-2}
\int_{B_\tau} |w|^{\frac{2n}{2n-1}}\, d\mu_0\leq  C\lp\int_{B_1} H_k^{i\bar j}D_{i}w D_{\bar j}w\varphi^2\, d\mu_0\rp^{\frac{n}{2n-1}}\leq C,
\eeq 
$C$ depends on $k$, $n$, $q$, $\lambda$, $\Lambda$, $\tau$, $\|\mathcal U\|_{L^{q_0}(B_1)}$ and $\|f\|_{L^q(B_1)}$.
Furthermore, by \eqref{L2-est},
we have for $\tau'\in (\tau, 1)$
\beq\label{ineq-21}
\int_{B_{\tau'}} |w|^{\frac{2n}{2n-1}}\, d\mu_0\leq C,
\eeq 
$C$ depends on $n$, $q$, $\lambda$, $\Lambda$, $\tau'$, $\|\mathcal U\|_{L^{q_0}(B_1)}$ and $\|f\|_{L^q(B_1)}$.

Next we estimate $\int_{B_\tau} w^{\beta}\, d\mu_0$ for any $\beta\geq 2$ by iteration.
For $m>0$, choose $\varphi=\zeta^2|w|^{2\beta}$ for a cut-off function $\zeta$ in $B_1$,
where 
\[w_m=
\begin{cases}
	-m,&\  w\leq -m,\\
	w, &  \ |w|<m, \\
	m,& \ w\geq m.
\end{cases}\]
Note that $Dw=Dw_m$ for $|w|<m$ and $D_wm=0$ for $|w|>m$.
By \eqref{w-eq}, we have
\begin{eqnarray*}
	\int_{B_1} H_k^{i\bar j}D_{i}w D_{\bar j}w\zeta^2|w_m|^{2\beta}\, d\mu_0
	&\leq& 2\beta\int_{B_1} \zeta^2H_k^{i\bar j}D_{i}w D_{\bar j}|w_m||w_m|^{2\beta-1}\, d\mu_0\\
	&&+\int_{B_1} 2\zeta |w_m|^{2\beta} H_k^{i\bar j}D_{i}w D_{\bar j}\zeta\, d\mu_0+ \int_{B_1} |\tilde f| \zeta^2|w_m|^{2\beta}\,d\mu_0\\
	&\leq& 2\beta\int_{B_1} \zeta^2H_k^{i\bar j}D_{i}w_m D_{\bar j}w_m|w_m|^{2\beta-1}\, d\mu_0\\
	&&+\int_{B_1} 2\zeta |w_m|^{2\beta} H_k^{i\bar j}D_{i}w D_{\bar j}\zeta\, d\mu_0+ \int_{B_1} |\tilde f| \zeta^2|w_m|^{2\beta}\,d\mu_0\\
	&\leq& 
	(1-\frac{1}{2\beta})\int_{B_1} \zeta^2 |w_m|^{2\beta}H_k^{i\bar j}D_{i}w_m D_{\bar j}w_m\, d\mu_0\\
	&&+(2\beta)^{2\beta-1}\int_{B_1} \zeta^2 H_k^{i\bar j}D_{i}w_m D_{\bar j}w_m\, d\mu_0\\
	&&+\int_{B_1} 2\zeta |w_m|^{2\beta} H_k^{i\bar j}D_{i}w D_{\bar j}\zeta\, d\mu_0+ \int_{B_1} |\tilde f| \zeta^2|w_m|^{2\beta}\,d\mu_0.
\end{eqnarray*}
Here we used  Young's inequality 
\[2\beta |w_m|^{2\beta-1}\leq (1-\frac{1}{2\beta})|w_m|^{2\beta}+(2\beta)^{2\beta-1}.\]
Therefore we have
\begin{eqnarray*}
	\int_{B_1} H_k^{i\bar j}D_{i}w D_{\bar j}w\zeta^2|w_m|^{2\beta}\, d\mu_0
	&\leq& 
	(2\beta)^{2\beta}\int_{B_1} \zeta^2 H_k^{i\bar j}D_{i}w_m D_{\bar j}w_m\, d\mu_0\\
	&&+4\beta\int_{B_1} \zeta |w_m|^{2\beta} H_k^{i\bar j}D_{i}w D_{\bar j}\zeta\, d\mu_0+ 2\beta\int_{B_1} |\tilde f| \zeta^2|w_m|^{2\beta}\,d\mu_0.
\end{eqnarray*}
By the mean value inequality
\[4\beta\int_{B_1} \zeta |w_m|^{2\beta} H_k^{i\bar j}D_{i}w D_{\bar j}\zeta\, d\mu_0\leq
\int_{B_1} (\frac{1}{2}H_k^{i\bar j}D_{i}w D_{\bar j}w\zeta^2+C\beta^2H_k^{i\bar j}D_{i}\zeta D_{\bar j}\zeta)|w_m|^{2\beta}\, d\mu_0.\]
Then we obtain
\begin{eqnarray*}
	\int_{B_1} H_k^{i\bar j}D_{i}w_m D_{\bar j}w_m\zeta^2|w_m|^{2\beta}\, d\mu_0
	&\leq& 
	\left[\lp(2\beta)^{2\beta}+\frac{1}{2}\rp\int_{B_1} \zeta^2 H_k^{i\bar j}D_{i}w_m D_{\bar j}w_m\, d\mu_0
	\right.\\ &&\left.+C\beta^2\int_{B_1} H_k^{i\bar j}\zeta_{i} \zeta_{\bar j}|w_m|^{2\beta}\, d\mu_0+ 2\beta\int_{B_1} |\tilde f| \zeta^2|w_m|^{2\beta}\,d\mu_0\right].
\end{eqnarray*}
For convenience, we write $w=w_m$  and then let $m\to+\infty$.
By Young's inequality
\begin{eqnarray*}
	H_k^{i\bar j}D_{i}(\zeta |w|^\beta) D_{\bar j}(\zeta |w|^\beta)&=&\zeta^2\beta^2|w|^{2\beta-2}H_k^{i\bar j}D_{i}w D_{\bar j}w+|w|^{2\beta}H_k^{i\bar j}D_{i}\zeta D_{\bar j}\zeta\\
	&&+\zeta\beta|w|^{2\beta-1}H_k^{i\bar j}D_{i}w D_{\bar j}\zeta+\zeta\beta|w|^{2\beta-1}H_k^{i\bar j}D_{i}\zeta D_{\bar j}w\\
	&\leq&2\zeta^2\beta^2|w|^{2\beta-2}H_k^{i\bar j}D_{i}w D_{\bar j}w+2|w|^{2\beta}H_k^{i\bar j}D_{i}\zeta D_{\bar j}\zeta\\
	&\leq&2\zeta^2(\frac{\beta-1}{\beta}|w|^{2\beta}+\beta^{2\beta-1})H_k^{i\bar j}D_{i}w D_{\bar j}w+2|w|^{2\beta}H_k^{i\bar j}D_{i}\zeta D_{\bar j}\zeta.
\end{eqnarray*}
Hence we have
\begin{eqnarray*}
	&&\int_{B_1} H_k^{i\bar j}D_{i}(\zeta |w|^\beta) D_{\bar j}(\zeta |w|^\beta)\, d\mu_0\\
	&\leq& 
	(2\beta)^{2\beta}C\int_{B_1} \zeta^2 H_k^{i\bar j}D_{i}w D_{\bar j}w\, d\mu_0
	+C\beta^2\int_{B_1} H_k^{i\bar j}D_{i}\zeta D_{\bar j}\zeta|w|^{2\beta}\, d\mu_0+ 4\beta\int_{B_1} |\tilde f| \zeta^2|w|^{2\beta}\,d\mu_0,
\end{eqnarray*}
where $C$ relies only on $k$, $n$. 
By H\"older's inequality
\[\int_{B_1} |\tilde f| \zeta^2|w|^{2\beta}\,d\mu_0\leq \lp\int_{B_1} |\tilde f|^q\,d\mu_0\rp^{\frac{1}{q}}\left[\int_{B_1} (\zeta |w|^{\beta})^{\frac{2q}{q-1}}\,d\mu_0\right]^{\frac{q-1}{q}}.\]
Since $q>n$, $\frac{2q}{q-1}<\frac{2n}{n-1}$. 
Then we apply Sobolev inequality \eqref{sob-ineq} to get
\begin{eqnarray*}
	\|\zeta |w|^{\beta}\|_{L^{\frac{2q}{q-1}}(B_1)}&\leq& \epsilon\|\zeta |w|^{\beta}\|_{L^{\frac{2n}{n-1}}(B_1)}+C_{n, q}\eps^{-\frac{n}{2q-n}}\|\zeta |w|^{\beta}\|_{L^{2}(B_1)}\\
	&\leq& \epsilon \left[\int_{B_1} H_k^{i\bar j}D_{i}(\zeta |w|^\beta) D_{\bar j}(\zeta |w|^\beta)\, d\mu_0\right]^{\frac{1}{2}}+C_{n, q}\eps^{-\frac{n}{2q-n}}\|\zeta |w|^{\beta}\|_{L^{2}(B_1)}.
\end{eqnarray*}
Here $\epsilon$ is a small constant.
Therefore by \eqref{L2-est} we have
\begin{eqnarray*}
	&&\int_{B_1} H_k^{i\bar j}D_{i}(\zeta |w|^\beta) D_{\bar j}(\zeta |w|^\beta)\, d\mu_0\\
	&\leq& 
	C\left[(2\beta)^{2\beta}\int_{B_1} \zeta^2 H_k^{i\bar j}D_{i}w D_{\bar j}w\, d\mu_0
	+\beta^\alpha\int_{B_1} (H_k^{i\bar j}D_{i}\zeta D_{\bar j}\zeta +\zeta^2)|w|^{2\beta}\, d\mu_0\right]\\
	&\leq& 
	C\left[(2\beta)^{2\beta}\int_{B_1} H_k^{i\bar j}D_{i}\zeta D_{\bar j}\zeta\, d\mu_0
	+\beta^\alpha\int_{B_1} (H_k^{i\bar j}D_{i}\zeta D_{\bar j}\zeta +\zeta^2)|w|^{2\beta}\, d\mu_0\right].
\end{eqnarray*}
where $\alpha>0$ is a constant depending on $n$ and $q$.
Now by  Sobolev inequality \eqref{sob-ineq} for $\zeta |w|^\beta$ with $\chi_0=\frac{n}{n-1}$,
we obtain
\begin{eqnarray*}
	\lp \int_{B_1}\zeta^{2\chi_0}|w|^{2\beta\chi_0}\rp^{\frac{1}{\chi_0}}\, d\mu_0&\leq& 
	C\beta^\alpha\left[(2\beta)^{2\beta}\int_{B_1} H_k^{i\bar j}\zeta_i \zeta_{\bar j}\, d\mu_0
	+\int_{B_1} (H_k^{i\bar j}\zeta_i \zeta_{\bar j} +\zeta^2)|w|^{2\beta}\, d\mu_0\right].
\end{eqnarray*}

For $\tau\leq r<R<1$, 
we choose the cut-off function such that $\zeta=1$  on $B_r$, $\zeta=0$ in $B_1\setminus B_R$, 
and $|D\zeta|\leq \frac{2}{R-r}$. Note that $\mathcal U\in L^{q_0}$. Then 
\begin{eqnarray*}
	\int_{B_1} (H_k^{i\bar j}D_{i}\zeta D_{\bar j}\zeta +\zeta^2)|w|^{2\beta}\, d\mu_0&\leq&
	\frac{C}{(R-r)^2}\int_{B_R} (\mathcal U+C)|w|^{2\beta}\, d\mu_0\\
	&\leq&
	\frac{C\|\mathcal U\|_{L^{q_0}(B_R)}}{(R-r)^2}\lp\int_{B_R} |w|^{2\beta \frac{q_0}{q_0-1}}\, d\mu_0\rp^{\frac{q_0-1}{q_0}}.
\end{eqnarray*}
Denote $\chi=\frac{(q_0-1)\chi_0}{q_0}>1$. Hence we get
\begin{eqnarray*}
	\lp \int_{B_r}|w|^{\frac{2\beta q_0}{q_0-1}\chi }\, d\mu_0\rp^{\frac{1}{\chi_0}}&\leq& 
	\frac{C\beta^\alpha \|\cU\|_{L^{q_0}(B_R)}}{(R-r)^2} \left[(2\beta)^{2\beta}
	+\lp\int_{B_R} |w|^{ \frac{2\beta q_0}{q_0-1}}\, d\mu_0\rp^{\frac{q_0-1}{q_0}} \right].
\end{eqnarray*}
For $\tau' \in (\tau, 1)$, we set $\beta_i=\chi^{i-1}$ and $r_i=\tau+\frac{\tau'-\tau}{2^{i-1}}\le \tau'$
for any $i=1, 2, \cdots$. Then
\begin{eqnarray*}
	\lp \int_{B_{r_i}}|w|^{\frac{2 q_0}{q_0-1}\chi^i }\, d\mu_0\rp^{\frac{1}{\chi_0}}&\leq& 
	\frac{C\chi^{\alpha(i-1)}2^{2(i-1)}\|\cU\|_{L^{q_0}(B_{\tau'})}}{(\tau'-\tau)^2}\\
	&& \cdot\left[(2\chi^{i-1})^{2\chi^{i-1}}
	+\lp\int_{B_{r_{i-1}}} |w|^{ \frac{2 q_0}{q_0-1}\chi^{i-1}}\, d\mu_0\rp^{\frac{q_0-1}{q_0}} \right].
\end{eqnarray*}
Let $$I_j=\|w\|_{L^{\frac{2 q_0}{q_0-1}\chi^j}(B_{r_j})},\ \ \ j=1,2,\cdots.$$
Then
\[I_j\leq \lp\frac{C\chi^{\alpha(j-1)}2^{2(j-1)}\|\cU\|_{L^{q_0}(B_{\tau'})}^{j-1}}{(\tau'-\tau)^2}\rp^{\frac{1}{2\chi^{j-1}}}(2\chi^{j-1}+I_{j-1})\leq
\frac{\lp C\|\cU\|_{L^{q_0}(B_{\tau'})}\rp^{\frac{j-1}{\chi^j-1}}}{(\tau'-\tau)^{\frac{1}{\chi^{j-1}}}}(2\chi^{j-1}+I_{j-1})\]
with $C$ depending on $k$, $n$, $\lambda$, $\Lambda$ and $\|f\|_{L^q(B_1)}$.
Iterating this inequality with 
\[\sum_{i=0}^\infty \frac{i}{\chi^i}=\sum_{N=1}^{\infty}\sum_{i=N}^{\infty}\frac{1}{\chi^i}=\frac{\chi}{(\chi-1)^2},\] 
we have
\begin{align*}
I_j\le \frac{C\|\cU\|_{L^{q_0}(B_{\tau'})}^{\sum_{i=1}^j\frac{i-1}{\chi^{i-1}}}}{(\tau'-\tau)^{\sum_{i=1}^j\frac{1}{\chi^{i-1}}}}\lp \chi^{j-1}+I_0\rp.
\end{align*}

Now for any $\beta$, there exists a $j$ such that  $\frac{2 q_0}{q_0-1}\chi^{j-1}< \beta\leq\frac{2 q_0}{q_0-1}\chi^j$. Hence, 
\begin{eqnarray*}
	\|w\|_{L^{\beta}(B_{\tau})}=\lp\int_{B_\tau}|w|^\beta\rp^{\frac{1}{\beta}}\leq CI_j
	\leq \frac{C\|\cU\|_{L^{q_0}}^{\frac{\chi}{(\chi-1)^2}}}{(\tau'-\tau)^{\frac{1}{\chi-1}}}(\beta+I_0).
\end{eqnarray*}
By \eqref{ineq-21}, $\|w\|_{L^{\frac{2 n}{2n-1}}(B_{\tau'})}\leq C$. 
Hence, we have
\begin{eqnarray*}
	\frac{C\|\cU\|_{L^{q_0}(B_{\tau'})}^{\frac{\chi}{(\chi-1)^2}}}{(\tau'-\tau)^{\frac{1}{\chi-1}}}I_0
	&=&\frac{C\|\cU\|_{L^{q_0}(B_{\tau'})}^{\frac{\chi}{(\chi-1)^2}}}{(\tau'-\tau)^{\frac{1}{\chi-1}}}\|w\|_{L^{\frac{2 q_0}{q_0-1}}(B_{\tau'})}\\[5pt]
	&\leq& \frac{C\|\cU\|_{L^{q_0}(B_{\tau'})}^{\frac{\chi}{(\chi-1)^2}}}{(\tau'-\tau)^{\frac{1}{\chi-1}}}\|w\|_{L^{\frac{2 n}{2n-1}}(B_{\tau'})}^{\kappa}\cdot \|w\|_{L^{\beta}(B_{\tau'})}^{1-\kappa}\\[5pt]
	&\leq & \frac{1}{2}\|w\|_{L^{\beta}(B_{\tau'})}+\frac{C\|\cU\|_{L^{q_0}(B_{\tau'})}^{\frac{\chi}{(\chi-1)^2}}}{(\tau'-\tau)^{\frac{C}{\chi-1}}},
\end{eqnarray*}
where 
\[\kappa=\frac{\frac{q_0-1}{2 q_0}-\frac{1}{\beta}}{\frac{2n-1}{2 n}-\frac{1}{\beta}}.\]
This implies
\[\|w\|_{L^{\beta}(B_{\tau})}\leq  \frac{1}{2}\|w\|_{L^{\beta}(B_{\tau'})}+\frac{C\|\cU\|_{L^{q_0}(B_{\tau'})}^{\frac{2\chi}{(\chi-1)^2}}\beta}{(\tau'-\tau)^{\frac{2}{\chi-1}}},\]
with $C$ depending on $k$, $n$, $\lambda$, $\Lambda$ and $\|f\|_{L^q(B_1)}$.
By  Lemma \ref{lem4.3}, 
\[\|w\|_{L^{\beta}(B_{\tau})}\leq \frac{C\|\cU\|_{L^{q_0}}^{\frac{2\chi}{(\chi-1)^2}}\beta}{(\tau'-\tau)^{\frac{2}{\chi-1}}}\leq C_*\beta,\]
where 
$$C_*:=\frac{C\|\cU\|_{L^{q_0}(B_{\tau'})}^{\frac{2\chi}{\chi-1}}}{(\tau'-\tau)^{\frac{2}{\chi-1}}},$$
and $C$ depends on $k$, $n$, $\lam$, $\Lambda$ and $\|f\|_{L^q(B_R)}$. 
Therefore, for any integer $\beta\geq 1$,
\[\int_{B_\tau}|w|^\beta\,d\mu_0\leq C_*^\beta  \beta^\beta\leq C_*^\beta e^\beta\beta!.\]
Here we used the Sterling formula. Let $p_0=\frac{1}{2C_*e}$. We have
\[\int_{B_\tau}\frac{(p_0|w|)^\beta}{\beta!}\,d\mu_0\leq \frac{1}{2^\beta}.\]
By Taylors series, we have 
\[\int_{B_\tau}e^{p_0|w|}\,d\mu_0\leq 2.\]
In conclusion, we have proved \eqref{w-har} for $p=p_0$ and $R=1$. 

As in \cite[Theorem 4.15, Step 2]{HL},
we can prove
\[\lp\int_{B_{r_1}}\hat v^{p_1}\,d\mu_0\rp^{\frac{1}{p_1}}\leq \lp\int_{B_{r_1}}\hat v^{p_2}\,d\mu_0\rp^{\frac{1}{p_2}}\] 
for $0<r_1<r_2<1$ and $0<p_2<p_1\le\frac{2n}{n-1}$ by similar arguments as in Theorem \ref{thm-har}. 
Then by iterations, we get \eqref{w-har} for $p\le\frac{2n}{n-1}$ and $R=1$. 
We omit the details here. Finally, by 
scaling as in Theorem \ref{thm-har}, we complete the proof of  \eqref{w-har}.
\end{proof}

\vskip 15pt

Combining Theorem \ref{thm-har} and Theorem \ref{weak-har2},  
we have  Theorem \ref{thm-har3}  and Theorem \ref{int-holder}  by standard arguments \cite{GT, HL}.

\vskip 20pt

\section{Remarks of the real Hessian case}\label{sec-hes}

The results in previous sections can be extended to real Hessian equations.

Let  $\Omega\in \mathbb R^n$ be a bounded domain and $u$ a smooth function defined on $\Omega$.
The $k$-complex Hessian operator is defined by 
\beq\label{k-r-hess}
S_k[u]=\sum_{1\leq j_1<\cdots<\lambda_k\leq n}\lambda_{j_1}\cdots\lambda_{j_k}, \ k=1, \cdots, n,
\eeq
where $\lambda_1,\cdots,\lambda_n$ are the eigenvalues of the Hessian $(u_{ij})=\left(\frac{\partial^2u}{\partial x_i x_j}\right)$.
Denote $S^{i j}_k=\frac{\partial S_k[u]}{\partial u_{i j}}$. The {\it linearized $k$-Hessian equation}
is 
\beq\label{l-k-r-hess}
L_{u,k}v:=S^{i j}_k v_{i j}=f(x).
\eeq
In particular, when $k=n$, $S_n$ is the Monge-Amp\`ere operator and $L_{u,n}$ is the linearized Monge-Amp\`ere equation. Denote
$\mathcal S(x)=\sum_{i=1}^n S_k^{ii}$ and 
$$\theta_q(B_R(x))=\lp\frac{1}{|B_R(x)|}\int_{B_R(x)}\mathcal S^q\,d\mu_0\rp^{\frac{1}{q}}.$$

Since the proofs are almost the same as the complex case, we only list the main theorems without proofs. For the Sobolev inequalities, one can see that the only difference is the critical exponent.

\begin{theo}\label{Sobolev-r-hess}
Assume $u$ is a smooth, strictly $k$-convex function($2\leq k\leq n-1$) satisfying 
\beq\label{low-upp-r}
0<\lambda\leq S_k[u]\leq \Lambda.
\eeq  
Then for any 
$2\le p\leq \frac{2n}{n-2}$, there exists a constant $C$ depending on $k$, $n$, $\lambda$, $\Lambda$, $\Omega$ and $p$ such that for any $v\in C^{2}(\Om)$ with vanishing boundary value, there holds
\beq\label{sob-ineq-r}
\|v\|_{L^p(\Omega)}\le C\cdot \lp \int_{\Om}S_k^{i j}v_i v_{j}\,d\mu_0\rp^{\frac{1}{2}}.
\eeq
\end{theo}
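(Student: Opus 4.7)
The plan is to mirror the three-step structure used for Theorem \ref{Sobolev}, with two modifications dictated by the real setting: the critical exponent of the Green's function estimate, and the relevant Moser--Trudinger inequality.

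First I would establish the analogue of Theorem \ref{main1} for the real linearized $k$-Hessian operator $L_{u,k}$. Following the Chen--Cheng style comparison, for each level set $K_t = \{G_\Omega(x,\cdot) > t\}$ I would solve
\beq\nonumber
L_{u,k} v^t = t^{1/(n-2)}\chi_{K_t}S_k[u], \quad v^t|_{\p\Omega}=0,
\eeq
and compare it with the solution $\psi$ to the real Monge--Amp\`ere equation $\det(\psi_{ij}) = t^{n/(n-2)}\chi_{K_t}S_k[u]^{n/k}$. The Maclaurin-type inequality $S_k^{ij}\psi_{ij}\ge k\binom{n}{k}^{1/k}S_k[u]^{1-1/k}\det(\psi_{ij})^{1/n}$ reduces matters to an $L^\infty$-bound on $\phi = t^{-1/(n-2)}\psi$, which satisfies $\det(\phi_{ij})=\chi_{K_t}S_k[u]^{n/k}$. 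The Alexandrov--Bakelman--Pucci maximum principle, or more efficiently the real Moser--Trudinger inequality for Monge--Amp\`ere (with exponent $(n+1)/n$ replaced by the appropriate $n/(n-1)$-type exponent in the real case), combined with the Wang--Wang--Zhou iteration, then gives $\|\phi\|_{L^\infty}\le C\mu_u(K_t)^{1/(n-1)}$, from which one extracts the weak-$L^{n/(n-2)}$ bound
\beq\nonumber
t^{n/(n-2)}\mu_u(K_t) \le C.
\eeq

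Second I would set up exactly the variational problem from the complex proof: minimize $\int_\Omega S_k^{ij}v_iv_j\,d\mu_0$ over $v\in C^1_0(\bar\Omega)$ with $\int_\Omega F(v)S_k[u]\,d\mu_0=1$, where $F'=f$ is the truncation of $|t|^{p-1}$ at height $K$. The Euler--Lagrange equation reads $L_{u,k}v = -\hat\lambda f(v)S_k[u]$ with $\hat\lambda\le s^*\le p\hat\lambda$. Using the Green's function decay from Step~1, for any $U\Subset\Omega$ the first eigenvalue of $L_{u,k}$ with weight $S_k[u]$ admits the lower bound
\beq\nonumber
\lambda_1(U)\cdot[\mu_u(U)]^{1-2/p}\ge (1-2/p)\alpha^{-2/p}=:c^*>0
\eeq
for every $p\le 2n/(n-2)$, by the same argument splitting $\int G_U\,d\mu_u = \int_0^T+\int_T^\infty$ at $T=(\alpha/\mu_u(U))^{2/p}$.

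Third, applying the first eigenvalue bound with $U=\Omega_t=\{v>M-t\}$ against the test function $v-M+t$ and iterating the resulting recursion $\mu_u(\Omega_t)\ge \beta\,\mu_u(\Omega_{t/2})^{p/(3p-4)}$ yields $\mu_u(\Omega_t)\ge (tc^*/(2s^*M^{p-1}))^{2/(p-2)}$; the limit $\lim_m \mu_u(\Omega_{t/2^m})^{(p/(3p-4))^m}=1$ uses the two-sided bound \eqref{low-upp-r} on balls centered at the maximum point of $v$, unchanged from the complex proof. Finally, rewriting the normalization $\int F(v)\,d\mu_u=1$ via the layer-cake formula and plugging in this level set lower bound provides the inequality $s^*\ge C c^*$, which is the desired Sobolev inequality once $K$ is sent to $\infty$ using $K/M\ge 1/(1+C)$ (this last ratio comes from applying Step~2's eigenvalue bound to the superlevel set $\{v>K\}$ together with $\mu_u(\{v>K\})\le pK^{-p}$).

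The main obstacle is Step~1: locating or verifying the precise real-analog Moser--Trudinger inequality on a bounded domain for the real Monge--Amp\`ere equation with the correct exponent, so that the iteration producing the critical weak-$L^{n/(n-2)}$ decay (rather than a subcritical $L^p$) goes through. Once that endpoint Green's function estimate is in hand, Steps 2 and 3 are a verbatim translation of the complex argument, with $\omega_0^n$ replaced by $d\mu_0$, $H_k^{i\bar j}$ by $S_k^{ij}$, and the exponent $2n/(n-1)$ by $2n/(n-2)$.
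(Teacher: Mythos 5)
Your overall plan correctly follows the paper's direction (mirror the three-step complex proof, with the real Moser--Trudinger inequality and real Monge--Amp\`ere comparison replacing their complex analogues). Steps 2 and 3 do indeed transfer verbatim once the Green's function decay is in hand. The problem is that Step~1 does not deliver the decay you need, and this is exactly the obstacle you yourself flag.

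There are two separate issues. The first is arithmetic. You claim that $\|\phi\|_{L^\infty}\le C\mu_u(K_t)^{1/(n-1)}$ yields $t^{n/(n-2)}\mu_u(K_t)\le C$. It does not. Tracing the comparison as in \eqref{infty-2}: if $\|\phi\|_{L^\infty}\le C\mu_u(K_t)^b$ and $\phi=t^{-a}\psi$, then $\|\psi\|_{L^\infty}\le Ct^{a}\mu_u(K_t)^b$, while $t^{1+a}\mu_u(K_t)\le\|v^t\|_{L^\infty}\le C^{-1}\|\psi\|_{L^\infty}$. The $t^a$ cancels and one lands at $t^{1/(1-b)}\mu_u(K_t)\le C$. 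With your $b=1/(n-1)$ this gives weak $L^{(n-1)/(n-2)}$ and hence Sobolev for $p\le 2(n-1)/(n-2)$, not $p\le 2n/(n-2)$. To reach weak $L^{n/(n-2)}$ one needs $b=2/n$.

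The second issue is structural, and it is the real gap. The Wang--Wang--Zhou iteration you invoke is driven by the $(n+1)$-homogeneous Monge--Amp\`ere energy $\|\phi\|^{n+1}=\int(-\phi)\det D^2\phi$, and yields $r\hat\mu(\Omega_{s+r})\le C\hat\mu(\Omega_s)^{1+\epsilon}$ with $\epsilon\to 1/n$ as $\beta\to\infty$ -- regardless of whether one is in $\mathbb{C}^n$ or $\mathbb{R}^n$, because the real and complex Monge--Amp\`ere operators are both degree $n$ and the energy has the same homogeneity. The endpoint of this iteration is $\|\phi\|_{L^\infty}\le C\mu_u(K_t)^{1/n}$, not $\mu_u(K_t)^{1/(n-1)}$ and certainly not $\mu_u(K_t)^{2/n}$. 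Feeding $b=1/n$ back into the chain gives the weak $L^{n/(n-1)}$ estimate, i.e.\ exactly the same decay as in the complex Theorem~\ref{main1}(i), and therefore only the Sobolev inequality for $p\le 2n/(n-1)$. This is \emph{weaker} than the claimed $p\le 2n/(n-2)$. The mismatch is not a matter of locating the ``right'' real Moser--Trudinger inequality; it is an intrinsic limitation of reducing to the $L^\infty$-bound of an auxiliary Monge--Amp\`ere solution via ABP-type energy methods (power $1/n$ of the total mass). Note that the comparison method happens to be sharp in $\mathbb{C}^n$ (real dimension $2n$, where the expected decay for a uniformly elliptic Green's function is weak $L^{2n/(2n-2)}=L^{n/(n-1)}$), but it is lossy in $\mathbb{R}^n$ (expected weak $L^{n/(n-2)}$). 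For the real Monge--Amp\`ere case $k=n$ the exponent $2n/(n-2)$ is obtained in Tian--Wang through the Caffarelli--Guti\'errez Harnack inequality, a genuinely different mechanism that this paper deliberately avoids. Your proposal, as written, would therefore prove the statement only with $\frac{2n}{n-2}$ replaced by $\frac{2n}{n-1}$.
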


In \cite{U90}, Urbas conjectured that  a $W^{2,p}$ solution to the $k$-Hessian equation with smooth right hand side is smooth when $p>\frac{k(k-1)}{2}$. This is known to be true when $k=n$, i.e., for the Monge-Amp\`ere equation. When $k<n$, 
it is proved that the conclusion holds if $p>\frac{k(n-1)}{2}$ \cite{U01}. The exponent was   improved to $p>\frac{n(k-1)}{2}$
as well as for the complex case \cite{DK}. By applying the Sobolev inequality \eqref{sob-ineq-r}, we have
\begin{theo}\label{main2-r}
Assume $n\geq 2$ and $p\geq \frac{n(k-1)}{2}$. Suppose $\Omega$ is a domain in $\mathbb R^n$. 
Suppose $u\in W^{2,p}(\Omega)$ be a solution to \eqref{k-hess-c}. Then for $\Omega'\Subset\Omega$, we have 
\begin{align}
\sup_{\Omega'}|\triangle u|\le C,
\end{align}
where $C$ depends on 
$k$, $n$, $p$, $\inf_\Omega f$, $\sup_\Omega f$, $\|f\|_{C^1(\Omega)}$, $\|u\|_{W^{2,p}(\Omega)}$ and $dist(\Omega', \p\Omega)$.
\end{theo}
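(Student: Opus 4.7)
The proof follows the strategy of Theorem \ref{main2} almost verbatim, with the real Sobolev inequality \eqref{sob-ineq-r} replacing its complex counterpart; the improved Sobolev exponent $\frac{2n}{n-2}$ (vs.\ $\frac{2n}{n-1}$ in the complex case) is exactly what accounts for the improved integrability threshold $\frac{n(k-1)}{2}$.

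First I would set $v = \Delta u$ and differentiate $S_k[u] = f$ twice; using concavity of $S_k^{1/k}$ on the $k$-admissible cone this gives
\beqs
S_k^{ij} v_{ij} \;\geq\; \Delta f - \frac{|\nabla f|^2}{f},
\eeqs
which in divergence form (by the divergence-free property of $S_k^{ij}$) yields the weak inequality
\beqs
\int_\Omega S_k^{ij} D_i v\, D_j \varphi\, dx \;\leq\; \int_\Omega\Big(\tfrac{|\nabla f|^2}{f} - \Delta f\Big)\varphi\, dx \qquad \forall\, \varphi \in W^{1,2}_0(\Omega),\; \varphi \geq 0.
\eeqs

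Next, test against $\varphi = \eta^2 v^{\beta+1}$ with $\eta \in C_0^\infty$ and $\beta \geq 0$. Using $0 < \lambda \leq S_k[u] \leq \Lambda$ together with the Newton--Maclaurin-type bounds $\prod_i S_k^{ii} \geq C_{n,k}$ and hence $\sum_i (S_k^{ii})^{-1} \leq C$, together with $\sum_i S_k^{ii} = (n-k+1) S_{k-1}[u] \leq C v^{k-1}$, the lower order terms are absorbed exactly as in the complex proof. Setting $w = v^{\beta/2 + 1}$, this produces the Caccioppoli-type estimate
\beqs
\int_\Omega S_k^{ij} D_i(w\eta) D_j(w\eta)\, dx \;\leq\; C(\beta+1)^2 \int_\Omega v^{\beta + 1 + k} |D\eta|^2\, dx.
\eeqs
Applying Theorem \ref{Sobolev-r-hess} with the critical exponent $\frac{2n}{n-2}$ to $w\eta$ gives
\beqs
\|w\eta\|_{L^{2n/(n-2)}(\Omega)} \;\leq\; C(\beta+1)\left(\int_\Omega v^{\beta+1+k}|D\eta|^2\, dx\right)^{1/2}.
\eeqs

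Finally, I would run the standard Moser iteration with $\alpha = \beta + 2$, $\gamma = \frac{n}{n-2}$, and $\alpha_j = \gamma \alpha_{j-1} - (k-1)$, using nested cutoffs $\eta_j$ on $B_{r + 2^{-j}\rho}$ as in the complex case. Since
\beqs
\alpha_j = \gamma^j\Big(\alpha_0 - \tfrac{k-1}{\gamma - 1}\Big) + \tfrac{k-1}{\gamma - 1},
\eeqs
the sequence diverges to $+\infty$ precisely when $\alpha_0 > \frac{k-1}{\gamma - 1} = \frac{(k-1)(n-2)}{2}$, i.e.\ when the starting exponent $p = \alpha_0 - 1 + k$ satisfies $p \geq \frac{n(k-1)}{2}$. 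Iterating then controls $\sup_{B_r} |\Delta u|$ by $\|v\|_{L^p(B_R)}^{1/(p - n(k-1)/2)}$ times a constant depending on $r, R$, $\|f\|_{C^1}$ and the other listed data, completing the proof.

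The only genuinely new point (and the main obstacle worth double-checking) is the switch of Sobolev exponent from $\frac{2n}{n-1}$ to $\frac{2n}{n-2}$: this is what shifts $\gamma-1$ from $\frac{1}{n-1}$ to $\frac{2}{n-2}$ and thereby halves the threshold in the Urbas-type condition on $p$. All other manipulations---the concavity identity, the absorption of $|\nabla f|^2/f$, the use of $\sum (S_k^{ii})^{-1} \leq C$, and the geometric iteration on the $\alpha_j$---transport verbatim from the complex proof of Theorem \ref{main2}, so no new analytic input is needed beyond Theorem \ref{Sobolev-r-hess}.
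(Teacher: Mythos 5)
Your proposal is correct and takes essentially the same route the paper intends: the authors explicitly state in Section 6 that the proofs carry over verbatim from the complex case with only the critical Sobolev exponent changed, and your computation correctly tracks how $\gamma = \frac{n}{n-2}$ (instead of $\frac{n}{n-1}$) shifts $\frac{k-1}{\gamma-1}$ from $(k-1)(n-1)$ down to $\frac{(k-1)(n-2)}{2}$ and hence the integrability threshold from $p>n(k-1)$ to $p>\frac{n(k-1)}{2}$. The one small slip is in the last inference, where $\alpha_0 > \frac{(k-1)(n-2)}{2}$ gives the strict inequality $p > \frac{n(k-1)}{2}$ rather than $p \geq \frac{n(k-1)}{2}$ as written (the paper's own statement has the same $\geq$/$>$ discrepancy relative to its complex analogue, so this is not a gap in your reconstruction).
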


We also have analogous regularity results for the linearized $k$-Hessian equations.

\begin{theo}\label{thm-har3-r}
Assume $u$ is a smooth, strictly $k$-convex function($2\leq k\leq n-1$) that satisfies 
\eqref{low-upp-r}.
Assume $ f\in L^q(\Omega)$ with $q>\frac{n}{2}$.
Let $v\in W^{1,2}(B_{R_0}(x_0))$ be a nonnegative solution to \eqref{l-k-r-hess} in $B_{R_0(x_0)}\subset \Omega$. Then there holds for any $q_0>\frac{n}{2}$ and $R<R_0$
\beq\label{cma-har}
\max_{B_{R}(x_0)} v\leq C(\min_{B_{R_0}(x_0)} v+R^{2-\frac{n}{q}}\|f\|_{L^{q}(B_{R_0}(x_0))}).
\eeq
Here $C$ depends only on $k$, $n$, $p$, $q_0$, $q$, $\lambda$, $\Lambda$,  $\|f\|_{L^q(\Omega)}$ and $\theta_{q_0}(B_{R_0}(x_0))$.
\end{theo}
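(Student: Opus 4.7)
The proof proceeds along the same three-stage scheme as in Section \ref{sec-har}: first a local $L^\infty$ estimate for subsolutions (real analog of Theorem \ref{thm-har}), then a weak Harnack inequality for nonnegative supersolutions (real analog of Theorem \ref{weak-har2}), and finally a standard chaining argument that pastes the two one-sided estimates together to yield \eqref{cma-har}. Throughout, the divergence-free identity $\sum_j D_j S_k^{ij}=0$ rewrites \eqref{l-k-r-hess} in divergence form so that $W^{1,2}_0$ test functions are admissible, and the Sobolev inequality \eqref{sob-ineq-r} with critical exponent $\tfrac{2n}{n-2}$ plays the role that \eqref{sob-ineq} played in the complex case.

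For the local boundedness step, I would fix a nonnegative subsolution $v$, set $\tilde v = v^+ + K$ with $K=\|f\|_{L^q(B_{R_0})}$, truncate to $\tilde v_m$, and insert the test function $\varphi=\eta^2(\tilde v_m^{\beta}\tilde v-K^{\beta+1})$ into the weak formulation. Cauchy's inequality absorbs the cross term; writing $w=\tilde v_m^{\beta/2}\tilde v$ and expanding $S_k^{ij}D_i(w\eta)D_j(w\eta)$, the Sobolev inequality \eqref{sob-ineq-r} yields
\begin{equation*}
\|w\eta\|_{L^{\frac{2n}{n-2}}(B_{R_0})}\le C\,\theta_{q_0}(B_{R_0})^{1/2}(1+\beta)^{1/2}\bigl(\|wD\eta\|_{L^{\frac{2q_0}{q_0-1}}}+\|w\eta\|_{L^{\frac{2q_0}{q_0-1}}}\bigr).
\end{equation*}
Since $q_0>n/2$ one has $\tfrac{2q_0}{q_0-1}<\tfrac{2n}{n-2}$, so Moser iteration on a nested family of shrinking balls together with Lemma \ref{lem4.3} upgrades this to an $L^\infty$ bound in terms of any $L^p$ norm, $p>0$, analogous to \eqref{loc-b}.

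For the weak Harnack step, I set $\hat v=v+K$ and test the supersolution inequality against $\hat v^{-2}\varphi$; as in the proof of Theorem \ref{weak-har2} this turns $\tilde v=\hat v^{-1}$ into a subsolution of a modified equation with a lower-order term, so Step 1 controls $\sup \hat v^{-1}$ by an $L^p$-norm of $\hat v^{-1}$. To turn this into a genuine lower bound on $\inf \hat v$ it suffices to prove $\int_{B_\tau}e^{p_0|w|}\le C$ for some $p_0>0$, where $w=\log\hat v-\gamma$ and $\gamma$ is the average of $\log\hat v$ over $B_\tau$. Testing against $\hat v^{-1}\varphi$ and then $\varphi^2$ gives the Caccioppoli inequality $\int S_k^{ij}w_iw_j\eta^2\le C\int S_k^{ij}\eta_i\eta_j$, the real analog of \eqref{L2-est}. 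I would then prove the real analog of Lemma \ref{p-ineq},
\begin{equation*}
\|w-\bar w\|_{L^{n/(n-1)}(B_R)}\le C\|\mathcal S\|_{L^{n-1}(B_R)}^{(n-1)/2}\Bigl(\int_{B_R}S_k^{ij}w_iw_j\,d\mu_0\Bigr)^{1/2},
\end{equation*}
by combining the $L^1$-Sobolev inequality on $\mathbb R^n$ with the algebraic bound $\sum_i 1/S_k^{ii}\le C\mathcal S^{n-1}$ that follows from Maclaurin's inequality applied to $\partial\sigma_k/\partial\lambda_i$. Feeding this into the Caccioppoli estimate gives $\|w\|_{L^{n/(n-1)}(B_{\tau'})}\le C$. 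Testing the $w$-equation against $\eta^2|w_m|^{2\beta}$ and iterating over a geometric sequence of radii then yields $\|w\|_{L^\beta(B_\tau)}\le C_*\beta$ for every integer $\beta\ge1$, whence Stirling's formula and the Taylor expansion of $e^x$ deliver $\int_{B_\tau}e^{p_0|w|}\le 2$ for $p_0=1/(2eC_*)$. A further range-extension step, identical to the last part of the proof of Theorem \ref{weak-har2}, pushes this from the single exponent $p_0$ to all $0<p<\tfrac{n}{n-2}$.

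The main obstacle, exactly as in the complex case, is that the Poincar\'e inequality available to us is of $(n/(n-1),2)$-type rather than $(2,2)$-type, which prevents us from starting the Moser iteration for $w$ at the natural exponent $2$; this is overcome by the preliminary iteration that boosts the initial integrability of $w$. Once the two one-sided estimates are in place, scaling from $B_{R_0}(x_0)$ to the unit ball and a finite chaining of balls of radius $\sim (R_0-R)$ joins the weak Harnack estimate on $\inf_{B_{R_0}}v$ to the local boundedness estimate on $\sup_{B_R}v$, yielding \eqref{cma-har} by the standard argument of \cite{GT,HL}.
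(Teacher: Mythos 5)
Your proposal is correct and follows essentially the same route the paper intends: the authors explicitly state that the real-Hessian proofs are ``almost the same as the complex case,'' and you have reconstructed exactly that argument, correctly adapting the critical Sobolev exponent from $\tfrac{2n}{n-1}$ to $\tfrac{2n}{n-2}$, the threshold $q_0>n$ to $q_0>\tfrac{n}{2}$ (which is what makes $\tfrac{2q_0}{q_0-1}<\tfrac{2n}{n-2}$), the Poincar\'e exponent from $\tfrac{2n}{2n-1}$ to $\tfrac{n}{n-1}$, and the final weak-Harnack range from $p<\tfrac{n}{n-1}$ to $p<\tfrac{n}{n-2}$, while keeping the same test functions, Moser iteration, John--Nirenberg step, and chaining.
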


\begin{cor}\label{int-holder-r}
Assume $u$ is a smooth, strictly $k$-convex function($2\leq k\leq n-1$) that satisfies 
\eqref{low-upp-r}. Assume $ f\in L^q(\Omega)$ with $q>\frac{n}{2}$.
Let $v\in W^{1,2}(\Omega)$ be a solution to \eqref{l-k-r-hess}.  
$\theta_{q_0}(B_R(x))$ is uniformly bounded for $B_R(x)\subset\Omega$ for some $q_0>\frac{n}{2}$.
Then $v\in C^\alpha(\Omega)$ for some 
$\alpha\in(0,1)$ depending on $k$, $n$, $q_0$, $q$, $\lambda$, $\Lambda$,  $\|f\|_{L^q(\Omega)}$ and $\theta_{q_0}=\sup_{B_R(x)}\theta_{q_0}(B_R(x))$. Moreover, there holds for any $B_R\subset\Omega$
\[|v(x)-v(y)|\leq C\lp\frac{|x-y|}{R}\rp^\alpha(\|v\|_{L^2(B_R)} +R^{2-\frac{n}{q}}\|f\|_{L^{q}(B_R)}), \ \ x, y\in B_{R/2},\]
 where the constant $C$ depends on $k$, $n$, $q_0$, $q$, $\lambda$, $\Lambda$,  $\|f\|_{L^q(\Omega)}$ and $\theta_{q_0}$.
\end{cor}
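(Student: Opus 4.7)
\textbf{Proof proposal for Corollary \ref{int-holder-r}.}
The plan is to derive the H\"older estimate from the Harnack inequality (Theorem \ref{thm-har3-r}) by the classical oscillation-decay argument of Krylov-Safonov type, exactly as is done for uniformly elliptic divergence form equations in \cite{GT, HL}. Fix a ball $B_R \subset \Omega$ and, for $0 < r \le R$, set
\[
M(r) = \sup_{B_r} v, \qquad m(r) = \inf_{B_r} v, \qquad \omega(r) = M(r) - m(r).
\]
The functions $w_1 = M(R) - v$ and $w_2 = v - m(R)$ are nonnegative on $B_R$ and, by the divergence form of $L_{u,k}$ (as in \eqref{com-div-1} in the complex case), they are solutions of $L_{u,k} w_i = \mp f$ in $B_R$. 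Applying Theorem \ref{thm-har3-r} to $w_1$ and $w_2$ on $B_R$ with some fixed inner radius $R/2$ gives, using that $\theta_{q_0}(B_R(x))$ is uniformly bounded by $\theta_{q_0}$,
\[
M(R) - m(R/2) \le C\bigl(M(R) - M(R/2)\bigr) + C R^{2-\tfrac{n}{q}} \|f\|_{L^q(B_R)},
\]
\[
M(R/2) - m(R) \le C\bigl(m(R/2) - m(R)\bigr) + C R^{2-\tfrac{n}{q}} \|f\|_{L^q(B_R)}.
\]

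Adding these two inequalities and rearranging yields the oscillation decay
\[
\omega(R/2) \le \gamma\, \omega(R) + C R^{2-\tfrac{n}{q}} \|f\|_{L^q(B_R)},
\]
with a constant $\gamma = (C-1)/(C+1) < 1$ depending only on $k$, $n$, $q_0$, $q$, $\lambda$, $\Lambda$, $\|f\|_{L^q(\Omega)}$, and $\theta_{q_0}$. Iterating this inequality on the dyadic radii $R, R/2, R/4, \ldots$ and absorbing the forcing term via the standard geometric-series lemma (see \cite[Lemma 8.23]{GT} or \cite[Lemma 4.19]{HL}) gives
\[
\omega(\rho) \le C \Bigl(\frac{\rho}{R}\Bigr)^{\alpha} \Bigl( \omega(R) + R^{2-\tfrac{n}{q}} \|f\|_{L^q(B_R)} \Bigr),
\qquad 0 < \rho \le R,
\]
for some $\alpha \in (0,1)$ determined by $\gamma$ and the exponent $2-n/q$.

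Finally, to replace $\omega(R)$ by $\|v\|_{L^2(B_R)}$ as stated, I would apply the local boundedness estimate (the real analogue of Theorem \ref{thm-har} obtained from the Sobolev inequality \eqref{sob-ineq-r}) to $\pm v$, giving $\omega(R) \le 2\sup_{B_R} |v| \le C(\|v\|_{L^2(B_{2R})} + R^{2-n/q}\|f\|_{L^q(B_{2R})})$, and then feed this bound into the oscillation estimate at scale $R/2$. The resulting pointwise H\"older bound for $z, w \in B_{R/2}$ is standard once one has the oscillation decay on every ball contained in $\Omega$. The only real subtlety, and the main point where the non-uniform ellipticity enters, is that the constant $\gamma$ must be uniform in the center of the ball; this is exactly guaranteed by the hypothesis that $\theta_{q_0}(B_R(x))$ is uniformly bounded, so that Theorem \ref{thm-har3-r} can be applied with a single constant $C$ at every scale and every base point.
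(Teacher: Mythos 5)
Your proposal is correct and follows the same route the paper takes: the authors explicitly state that Theorem \ref{int-holder} follows from the Harnack inequality (Theorem \ref{thm-har3}) by ``standard arguments \cite{GT, HL},'' and that the real case is obtained by the identical reasoning, which is exactly the oscillation-decay scheme you spell out (apply Harnack to $\sup_{B_R} v - v$ and $v - \inf_{B_R} v$, deduce $\omega(R/2) \le \gamma\,\omega(R) + CR^{2-n/q}\|f\|_{L^q}$, iterate via the geometric-series lemma, and convert $\omega(R)$ to an $L^2$-norm bound using the local boundedness theorem). You also correctly flag the one genuinely non-routine point, namely that the Harnack constant is uniform across scales and base points only because $\theta_{q_0}(B_R(x))$ is assumed uniformly bounded, which is precisely why that hypothesis appears in the corollary.
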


\vskip 20pt

\section{Applications to the cscK equation}\label{sec-scalar}

In this section, we consider the following fourth order equation
\beq\label{scalar-eq}
\sum_{i,j=1}^nu^{i\bar j}[\log\det(u_{kl})]_{i\bar j}=f(z),
\eeq
where $u\in C^4$ is a plurisubharmonic function. 
The left hand side of the equation is the scalar curvature of the K\"ahler metric given by $\omega=\sqrt{-1}\partial\bar\partial u$. Denote $\mathcal U=\sum_{i=1}^n\frac{1}{u_{i\bar i}}$. 
The following interior estimate was obtained in \cite{CC}.

\begin{theo} [{\cite[Corollary 6.2]{CC}}]
Let $u\in C^4(B_1)\cap PSH(B_1)$ be a bounded solution to the scalar flat
equation
\beq\label{scalar-flat}
\sum_{i,j=1}^nu^{i\bar j}[\log\det(u_{k\bar l})]_{i\bar j}=0.
\eeq
Assume that for some $p>3n(n-1)$, we have $\triangle u\in L^p(B_1)$ and $\mathcal U\in  L^p(B_1)$. Then for any $\alpha\in (0,1)$, 
\[\|u\|_{C^{2,\alpha}(B_{1/2})}\leq C.\]
Here $C$ depends on $p$, $\|u\|_{L^\infty(B_1)}$, $\|\triangle u\|_{L^p(B_1)}$, $\|\mathcal U\|_{L^p(B_1)}$.
\end{theo}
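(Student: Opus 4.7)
The plan is to apply Theorem \ref{int-holder} to the function $\phi := \log \det(u_{k\bar l})$ after first converting the integral hypotheses into pointwise bounds on $\det(u_{i\bar j})$, and then to conclude by interior Evans-Krylov regularity for the complex Monge-Amp\`ere equation. The key structural observation is that multiplying the scalar flat equation \eqref{scalar-flat} through by $\det(u_{k\bar l})$ recasts it as the homogeneous divergence-form linearized complex Monge-Amp\`ere equation $D_{\bar j}(U^{i\bar j} D_i \phi) = 0$, i.e.\ $\mathcal L_{u,n}\phi = 0$.

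The first step converts integral data into pointwise control on $\det(u_{i\bar j})$. Hadamard's inequality and the arithmetic-geometric mean yield $\det(u_{i\bar j}) \leq (\triangle u / n)^n$ and, at a frame diagonalizing $u_{i\bar j}$, $1/\det(u_{i\bar j}) \leq (\mathcal U / n)^n$, so $\det(u_{i\bar j})$ and $1/\det(u_{i\bar j})$ both lie in $L^{p/n}(B_1)$; similarly the trace of the coefficient matrix satisfies $\sum_i U^{i\bar i} \leq C (\triangle u)^{n-1} \in L^{p/(n-1)}(B_1)$ with $p/(n-1) > 3n$. Since $\phi$ solves $\mathcal L_{u,n}\phi = 0$, a direct computation shows $\mathcal L_{u,n}(e^{\pm\phi}) = e^{\pm\phi} U^{i\bar j} \phi_i \phi_{\bar j} \geq 0$, so both $\det(u_{i\bar j})$ and $1/\det(u_{i\bar j})$ are nonnegative subsolutions of the same divergence-form equation. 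A Moser iteration patterned on Theorem \ref{thm-har}, using the Sobolev inequality in its integral-hypothesis form (cf.\ the remark after Theorem \ref{Sobolev}) together with the $L^{q_0}$ bound on $\sum_i U^{i\bar i}$ for $q_0 = p/(n-1) > n$, then upgrades the $L^{p/n}$ control to pointwise bounds $0 < \lambda \leq \det(u_{i\bar j}) \leq \Lambda$ on $B_{3/4}$. The threshold $p > 3n(n-1)$ is calibrated so that this iteration closes.

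With \eqref{low-upp} now in place, Theorem \ref{int-holder} applies to $\phi$ (taking $k = n$, $f = 0$, $q_0 = p/(n-1) > n$, and noting that $\Theta_{q_0}(B_R)$ is uniformly bounded by the preceding step), yielding $\phi \in C^\beta(B_{5/8})$ for some $\beta \in (0,1)$. Consequently $u$ solves the complex Monge-Amp\`ere equation $\det(u_{i\bar j}) = e^\phi$ with $C^\beta$ right-hand side bounded above and below, and the standard interior Evans-Krylov theorem for the complex Monge-Amp\`ere equation gives $u \in C^{2,\beta'}(B_{1/2})$ for some $\beta' \in (0,1)$; a Schauder bootstrap (differentiating the equation and iterating) finally produces $u \in C^{2,\alpha}(B_{1/2})$ for every $\alpha \in (0,1)$.

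The main obstacle is Step 1. The Sobolev inequality of Theorem \ref{Sobolev} presupposes the pointwise upper bound on $\det(u_{i\bar j})$ that we are trying to prove, so one must instead work with its integral/doubling variant and the $L^q$ estimate on $\sum_i U^{i\bar i}$ in order to run the Moser iteration on the subsolutions $e^{\pm\phi}$. The specific threshold $p > 3n(n-1)$ is precisely what is needed to reconcile the loss of an $n$-th power coming from Hadamard's inequality with the critical Sobolev exponent $\tfrac{2n}{n-1}$ of the linearized Monge-Amp\`ere operator; once this step succeeds, the remainder of the argument is standard.
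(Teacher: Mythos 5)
This theorem is not proved in the paper at all; it is quoted verbatim as Corollary~6.2 of Chen--Cheng \cite{CC}, and the paper's own contribution in Section~\ref{sec-scalar} is the subsequent Proposition, which \emph{assumes} the pointwise bounds \eqref{det-low-upp} as a hypothesis and only then applies the Harnack machinery. So there is no "paper's own proof" to compare against; what follows is a critique of your argument on its own terms.

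Your Step~1 is circular and does not close. The Moser iteration of Theorem~\ref{thm-har} (and hence the scheme you propose for getting $\sup e^{\pm\phi}\le C$) rests on the Sobolev inequality \eqref{sob-ineq}, whose proof rests on the Green's function estimate \eqref{w-lp}. That Green's function estimate requires the \emph{pointwise} upper bound $H_k[u]\le \Lambda$: the factor $\Lambda^{1/\beta}$ is pulled out explicitly in \eqref{case2} before the Moser--Trudinger inequality \eqref{MT0} is invoked, and there is no way to run that step with only $H_n[u]\in L^{p/n}$. The ``integral/doubling variant'' you gesture at (the Remark after Theorem~\ref{Sobolev}) still requires both $H_k[u]\le\Lambda$ pointwise \emph{and} the doubling property \eqref{doub}, neither of which you have. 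The lower bound is also used (in \eqref{db-arg}) to establish \eqref{lim-term} in Step~2 of the Sobolev proof. So every route from the paper's machinery to a weighted Sobolev inequality presupposes exactly what Step~1 is supposed to produce. The observation $\mathcal L_{u,n}(e^{\pm\phi})\ge 0$ is correct, but it is not by itself enough: without a Sobolev inequality adapted to the degenerate form $\int U^{i\bar j}D_iwD_{\bar j}w$, the iteration has nothing to iterate against, and such an inequality can genuinely fail when $\det(u_{i\bar j})$ is allowed to degenerate. (Your side computation via general nonuniformly elliptic theory gives thresholds like $p>2n^2$ or $p>n(2n-1)$, which are not implied by $p>3n(n-1)$ for small $n$, so that escape hatch does not close the gap either.)

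Chen--Cheng's actual argument avoids the circularity entirely: they do not invoke any Sobolev inequality adapted to $U^{i\bar j}$. Instead they apply the \emph{standard Euclidean} Sobolev inequality to scalar quantities built from $u$ (e.g.\ $\triangle u$ and $e^{F/2}|\nabla_\omega F|^2$) and exploit the algebraic structure of the scalar-flat equation, running a Moser iteration in which the weight is absorbed via interpolation between $\|\triangle u\|_{L^p}$ and $\|\mathcal U\|_{L^p}$; the exponent $p>3n(n-1)$ is calibrated for \emph{that} scheme. Once $\lambda\le\det(u_{i\bar j})\le\Lambda$ is in hand, the remainder of your proposal (apply Theorem~\ref{int-holder} to $\phi$ with $k=n$, $f=0$, then Evans--Krylov, then Schauder) is sound, and is close in spirit to the paper's proof of its own Proposition. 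But Step~1 is the substance of the theorem, and as written your method for it cannot be carried out with the tools developed in this paper.
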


When $u=0$ on $\partial\Omega$, $\Omega$ is close to a ball and $\det(u_{i\bar j})$ is sufficiently close to a constant, 
it is show in \cite{CX} that  the conditions $\|\triangle u\|_{L^p(B_1)}$, $\|\mathcal U\|_{L^p(B_1)}$ automatically satisfy.
In this section, we study the interior regularity of \eqref{scalar-eq} with the new Sobolev inequality under 
the condition   
\beq\label{det-low-upp}
0<\lambda\leq \det(u_{i\bar j})\leq \Lambda.
\eeq

\begin{prop} 
Let $u\in C^4(B_1)\cap PSH(B_1)$ be a bounded solution to the scalar curvature
equation \eqref{scalar-eq}. 
Assume that \eqref{det-low-upp} holds and for some $p>n$, we have $\triangle u\in L^p(B_1)$, $\mathcal U\in  L^p(B_1)$ and $f\in W^{1, p}(B_1)$. Then 
\[\|u\|_{C^{2}(B_{1/2})}\leq C.\]
Here $C$ depends on $p$, $\lambda$, $\Lambda$, $\|u\|_{L^\infty(B_1)}$ and $\|\mathcal U\|_{L^p(B_1)}$.
\end{prop}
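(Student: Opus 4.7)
The plan is to read \eqref{scalar-eq} as a coupled system: the function $h := \log\det(u_{k\bar l})$ satisfies a linearized complex Monge-Amp\`ere equation with right-hand side $f$, while $u$ satisfies the complex Monge-Amp\`ere equation $\det(u_{i\bar j}) = e^h$. The interior H\"older estimate of Theorem \ref{int-holder} applied to the first equation yields $h\in C^\alpha$ in the interior, and standard interior $C^{2,\alpha}$ regularity for the complex Monge-Amp\`ere equation with H\"older right-hand side will then upgrade $u$ from $W^{2,p}$ to a uniform $C^2$ bound.

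\textbf{Step 1 (H\"older estimate for $h$).} Setting $h = \log\det(u_{k\bar l})$, equation \eqref{scalar-eq} becomes $u^{i\bar j}h_{i\bar j} = f$. Multiplying by $\det(u_{k\bar l})$ and using the divergence-free identity $\sum_j D_{\bar j}U^{i\bar j} = 0$, I would rewrite this in divergence form
\[
D_{\bar j}\bigl(U^{i\bar j} D_i h\bigr) \;=\; f\,\det(u_{k\bar l}),
\]
which is \eqref{com-div-1} with $k=n$. Condition \eqref{low-upp} in Theorem \ref{int-holder} is precisely \eqref{det-low-upp}; the right-hand side $f\det\in L^p(B_1)$ with $p>n$ since $f\in W^{1,p}\subset L^p$ and $\det$ is bounded above. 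For the $\Theta_{q_0}$ hypothesis, one uses the identity $U^{i\bar j} = \det(u_{k\bar l})\,u^{i\bar j}$, giving $\sum_i U^{i\bar i} = \det(u_{k\bar l})\cdot\text{tr}(u^{-1})$, which by \eqref{det-low-upp} is comparable to the quantity $\mathcal U$ of the proposition; hence $\sum_i U^{i\bar i}\in L^p(B_1)$ and $\Theta_p(B_R)$ is uniformly bounded for $B_R\subset B_{3/4}$ with $q_0 = p > n$. Since $u\in C^4$, $h\in C^2\subset W^{1,2}$, so Theorem \ref{int-holder} applies and yields
\[
\|h\|_{C^\alpha(B_{3/4})}\;\leq\;C
\]
for some $\alpha\in(0,1)$, with $C$ depending only on $p$, $\lambda$, $\Lambda$, $\|u\|_{L^\infty(B_1)}$ and $\|\mathcal U\|_{L^p(B_1)}$.

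\textbf{Step 2 ($C^2$ estimate for $u$).} On $B_{3/4}$ the plurisubharmonic function $u$ now solves the complex Monge-Amp\`ere equation $\det(u_{i\bar j}) = e^h$ with $e^h\in C^\alpha(B_{3/4})$ bounded above and below. Invoking interior $C^{2,\alpha'}$ regularity for the complex Monge-Amp\`ere equation with H\"older right-hand side (the complex analog of Caffarelli's interior estimate, as used for instance in \cite{CC}) on the ball $B_{1/2}\Subset B_{3/4}$ gives $\|u\|_{C^{2,\alpha'}(B_{1/2})}\leq C$, and in particular the claimed $C^2$ bound.

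\textbf{Main obstacle.} The substantive work lies entirely in Step 1, where the essential task is to check that the coefficient trace $\sum_i U^{i\bar i}$ of the linearized operator is controlled in $L^{q_0}$ with $q_0>n$ through the assumed integrability of $\mathcal U$, reconciling the notation of Section 5 (trace of the cofactor) with the quantity defined in Section 7. Once the $\Theta_{q_0}$ hypothesis is verified, the H\"older estimate on $h$ is a direct application of Theorem \ref{int-holder}, and Step 2 is a standard bootstrap from complex Monge-Amp\`ere theory; no new technique beyond the estimates developed in Sections 2--5 is needed.
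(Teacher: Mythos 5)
Your proof takes a genuinely different route and, unfortunately, Step 2 contains a gap.

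Step 1 is fine: rewriting $u^{i\bar j}h_{i\bar j}=f$ in divergence form $D_{\bar j}(U^{i\bar j}D_ih)=f\det(u_{k\bar l})$, checking \eqref{low-upp}, the $L^q$ bound on the right-hand side, and the $\Theta_{q_0}$ hypothesis (via $\sum_iU^{i\bar i}=\det(u_{k\bar l})\,\mathrm{tr}(u^{-1})\le\Lambda\,\mathcal U$), one can legitimately invoke Theorem \ref{int-holder} with $q_0=p>n$ to get $h\in C^\alpha$ on an interior ball. This is actually a nice observation not made explicitly in the paper.

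Step 2 is where the argument breaks down. You cite ``the complex analog of Caffarelli's interior estimate'' to pass from $\det(u_{i\bar j})=e^h$ with $e^h\in C^\alpha$ (bounded above and below) and $u\in W^{2,p}$ with $p>n$ to $\|u\|_{C^{2,\alpha'}(B_{1/2})}\le C$. No such local estimate exists. Caffarelli's interior $C^{2,\alpha}$ theory for the real Monge--Amp\`ere equation is built on the real convexity of $u$ and the normalization of sections; none of that transfers to the complex setting, and interior $C^2$ regularity for $\det(u_{i\bar j})=g$ with $g\in C^\alpha$, $0<\lambda\le g\le\Lambda$ is precisely a well-known open problem. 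The only interior $\triangle u$ bound available in this paper is Theorem \ref{main2}, which requires $u\in W^{2,p}$ with $p>n(n-1)$ \emph{and} $g\in C^1$; with the present hypotheses $p>n$ and $g\in C^\alpha$, that theorem does not apply. Once $\triangle u$ is known to be bounded, Evans--Krylov gives $C^{2,\alpha}$, but the bound on $\triangle u$ is the whole point, and it is exactly what Step 2 implicitly assumes without proof.

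The paper's proof avoids this trap by exploiting the full structure of the fourth-order equation, not just the information $h\in C^\alpha$. Following Chen--Cheng, it introduces the auxiliary quantity $v=\triangle u+e^{G/2}|\nabla_\omega G|^2$ with $G=\log\det(u_{k\bar l})$; this specific combination cancels the bad third-order terms and yields the one-sided differential inequality $\triangle_\omega v\ge -C\bigl(|f|+|\nabla f|+e^{-G/2}\triangle u\bigr)v$, where the coefficient lies in $L^p$, $p>n$, by the hypotheses. The local boundedness result Theorem \ref{thm-har} (not the H\"older estimate) then gives $\sup_{B_{1/2}}v\le C\|v\|_{L^1(B_{3/4})}$, and the $L^1$ bound on $v$ follows from an integration-by-parts energy estimate for $G$. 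This yields $\triangle u\in L^\infty(B_{1/2})$ directly, which is what your Step 2 would need as input. Your Step 1 could be kept as a remark, but the essential mechanism --- the Chen--Cheng test function and the weak Harnack/local boundedness argument, rather than a Caffarelli-type interior estimate --- is missing from your proposal.
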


\begin{proof}
Let $G=\log\det(u_{k\bar l})$. Then \eqref{scalar-eq} can be written as
\beq\label{4-eq-g}
\triangle_\omega G= f(z),
\eeq
where $\triangle_\omega$ is the Laplacian with respect to $\omega$. 
Following the idea of \cite{CC}, we consider 
\[v=\triangle u+e^{\frac{1}{2}G}|\nabla_\omega G|^2,\]
where $|\nabla_\omega G|^2=u^{i\bar j}G_iG_{\bar j}$ is the gradient with respect to $\omega$. By checking the computations in \cite[Section 6]{CC},
$v$ satisfies the following differential inequality
\beq\label{diff-ineq}
\triangle_\omega v\geq -C(|f|+|\nabla f|+e^{-\frac{1}{2}G}\triangle u ) v.
\eeq
By the conditions in this proposition, we have $(|f|+|\nabla f|+e^{-\frac{1}{2}G}\triangle u )\in L^p(B_1)$
with $p>n$. Then we can apply Theorem \ref{thm-har} to obtain $\sup_{B_{1/2}} v\leq C\|v\|_{L^1(B_{3/4})}$.
It remains to prove $\|v\|_{L^1(B_{3/4})}\leq C$. 
It is clear that $\Delta u\in L^1(B_{1})$. It suffices to estimate the integral of $|\nabla_\omega G|^2$.
Let $\eta\in C_0^\infty(B_1)$  be a cutoff function and $\eta\equiv 1$ in ${B_{3/4}}$.
Multiplying \eqref{4-eq-g} by  $\eta^2G$ and by integration by parts, we have
$$\int_{B_1}U^{i\bar j}G_iG_{\bar j}\eta^2\,\omega_0^n+2\int_{B_1}U^{i\bar j}G_i\eta_{\bar j}\eta G\,\omega_0^n=\int_{B_1}fG\eta^2\,\omega_0^n.$$
Then by Cauchy's inequality, we get
\begin{eqnarray*}
	\frac{1}{2}\int_{B_1}U^{i\bar j}G_iG_{\bar j}\eta^2\,\omega_0^n&\leq& 2\int_{B_1}U^{i\bar j}\eta_i\eta_{\bar j} G^2\,\omega_0^n+\int_{B_1}|f|G\eta^2\,\omega_0^n\\[5pt]
	&\leq&2\int_{B_1}\mathcal U|D\eta|^2G^2\,\omega_0^n+\int_{B_1}|f|G\eta^2\,\omega_0^n.
\end{eqnarray*}
Hence $\int_{B_{3/4}}U^{k\bar l}G_kG_{\bar l}\,\omega_0^n\leq C$ follows by $\mathcal U\in L^{p}(B_1)$$(p>n)$, $f\in L^q(B_1)$.
\end{proof}

The higher regularities will follow by the standard  theory of elliptic equations.
Finally, by a scaling argument as in \cite{CC}, we have the Liouville theorem. 

\begin{cor} 
Let $u$ be a smooth, strictly plurisubharmonic function such that $\omega=\sqrt{-1}\partial\bar\partial u$ defines a scalar flat metric on $\mathbb C^n$, i.e., $u$ satisfies equation \eqref{scalar-flat}. Suppose \eqref{det-low-upp} holds on $\mathbb C^n$.
If for some $p>n$, we have 
\[\liminf_{r\to+\infty}\frac{1}{R^{2n}}\int_{B_R(0)\subset\mathbb C^n}(\triangle u+\mathcal U)^p \,d\mu_0<+\infty,\]
then 
$u$ is a quadratic polynomial.
\end{cor}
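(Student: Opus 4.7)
The plan is to combine the interior $C^2$ estimate of the previous proposition with a rescaling argument, following the strategy of Chen--Cheng. For $R>0$ set $u_R(z):=R^{-2}u(Rz)$. Then $((u_R)_{i\bar j})(z)=(u_{i\bar j})(Rz)$, so $\det((u_R)_{i\bar j})\in[\lambda,\Lambda]$ and $u_R$ again solves \eqref{scalar-flat}. Moreover $\triangle u_R(z)=\triangle u(Rz)$ and $\cU_R(z)=\cU(Rz)$, so a change of variables gives
$$\int_{B_1}(\triangle u_R+\cU_R)^p\,d\mu_0 \;=\; R^{-2n}\int_{B_R(0)}(\triangle u+\cU)^p\,d\mu_0.$$
By the $\liminf$ hypothesis, there is a sequence $R_k\to\infty$ along which the right side is uniformly bounded; after replacing $u$ by $u-\ell$ for a suitable pluriharmonic (e.g.\ real part of an affine) correction, which leaves both \eqref{scalar-flat} and \eqref{det-low-upp} invariant, one may also arrange $\|u_{R_k}\|_{L^\infty(B_1)}$ to be uniformly controlled.

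I then apply the previous proposition with $f\equiv 0$ to each $u_{R_k}$ on $B_1$: every input norm is bounded independently of $k$, so
$$\|u_{R_k}\|_{C^2(B_{1/2})}\leq C$$
with $C$ independent of $k$. Together with $\det((u_{R_k})_{i\bar j})\in[\lambda,\Lambda]$, this pins the eigenvalues of $((u_{R_k})_{i\bar j})$ away from $0$ and $\infty$ on $B_{1/2}$, so the linearized operator is uniformly elliptic there. Setting $G_R:=\log\det((u_R)_{i\bar j})$, equation \eqref{scalar-flat} becomes the linear equation $u_R^{i\bar j}(G_R)_{i\bar j}=0$ with bounded coefficients and bounded solution $G_R\in[\log\lambda,\log\Lambda]$. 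De Giorgi--Nash--Moser yields uniform $C^\alpha$ bounds on $G_{R_k}$; Evans--Krylov--Siu applied to $\det((u_{R_k})_{i\bar j})=e^{G_{R_k}}$ upgrades $u_{R_k}$ to uniformly $C^{2,\alpha}$; and iterating Schauder theory for \eqref{scalar-flat} gives uniform $\|u_{R_k}\|_{C^{m,\alpha}(B_{1/8})}\leq C_m$ for every $m\geq 2$.

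Finally I unwind the scaling. For any multi-index $\alpha$ one has $\partial^\alpha u_R(z)=R^{|\alpha|-2}\partial^\alpha u(Rz)$, so for $|\alpha|\geq 3$ and $z\in B_{1/8}$,
$$|\partial^\alpha u(R_k z)|\;\leq\;C_{|\alpha|}\,R_k^{2-|\alpha|}.$$
Fix $w\in\bC^n$; for $k$ large enough that $w\in B_{R_k/8}$, we get $|\partial^\alpha u(w)|\leq C_{|\alpha|}\,R_k^{2-|\alpha|}\to 0$. Hence every derivative of $u$ of order $\geq 3$ vanishes identically, which is exactly the statement that $u$ is a quadratic polynomial.

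The main obstacle is ensuring that the constant in the previous proposition does not blow up along the scaling sequence. This splits into two parts: (i) the $L^\infty$ normalization of $u_R$, handled by subtracting a pluriharmonic correction (using that only the Hessian of $u$ enters the scalar-curvature equation and \eqref{det-low-upp}); and (ii) the higher-regularity bootstrap in the complex setting, which is technically intricate but rests on standard ingredients: De Giorgi--Nash--Moser for linear second-order equations with bounded coefficients, Evans--Krylov--Siu for the complex Monge--Amp\`ere equation, and Schauder estimates for the linearized scalar-curvature equation.
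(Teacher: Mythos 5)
Your strategy is exactly the one the paper has in mind: the paper does not actually write out a proof but simply cites the Chen--Cheng scaling argument, and the skeleton you describe (scale $u_R(z)=R^{-2}u(Rz)$, use the $\liminf$ hypothesis to get uniform $L^p$ data along a sequence $R_k\to\infty$, invoke the previous proposition with $f\equiv 0$, bootstrap via De Giorgi--Nash--Moser plus Evans--Krylov--Schauder, and undo the scaling) is the intended argument.

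However, there is a real gap in the normalization and unwinding steps, and it is worth flagging because the paper's own statement shares the same wrinkle. The quantities that the hypotheses control --- $\det(u_{i\bar j})$, $\triangle u$, $\mathcal U$, and the scalar curvature equation --- are all invariant under adding a \emph{pluriharmonic} function to $u$. Consequently $u$ itself cannot be controlled: for instance $u=|z|^2+\mathrm{Re}(z_1^3)$ satisfies every hypothesis (the metric $\sqrt{-1}\partial\bar\partial u$ is Euclidean), yet $u$ is not a quadratic polynomial. So neither your claim that a pluriharmonic correction $\ell$ makes $\|u_{R_k}\|_{L^\infty(B_1)}$ uniformly bounded while still allowing conclusions about $u$, nor the final unwinding $|\partial^\alpha u(w)|\leq C_{|\alpha|}R_k^{2-|\alpha|}$ for all $|\alpha|\geq 3$, can be correct as stated: if $\ell_k$ depends on $k$ (and it must --- in the example above the pluriharmonic part of $u_{R_k}$ is $R_k\,\mathrm{Re}(z_1^3)$, which blows up), then the bounds you obtain pertain to $u-\ell_k$, not to $u$, and passing to the limit gives no information on $\partial^\alpha u$ directly.

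The fix is to observe that what the previous proposition (and its bootstrap) really controls, with constants independent of $\|u\|_{L^\infty}$, is the complex Hessian $(u_{i\bar j})$ and its derivatives: the quantity $v=\triangle u+e^{G/2}|\nabla_\omega G|^2$ that gets estimated, and the subsequent Evans--Krylov step applied to $\det(u_{i\bar j})=e^{G}$, never see the pure second derivatives $u_{ij}$. One should therefore unwind the scaling only on the pluriharmonic-invariant object $((u_R)_{i\bar j})=(u_{i\bar j})(Rz)$: uniform $C^{m,\alpha}(B_{1/8})$ bounds on $((u_{R_k})_{i\bar j})$ give, for every $|\gamma|\geq 1$ and every $w\in\bC^n$,
\[
|D^\gamma u_{i\bar j}(w)|=R_k^{-|\gamma|}\,\big|D^\gamma\big((u_{R_k})_{i\bar j}\big)(w/R_k)\big|\leq C_{|\gamma|}R_k^{-|\gamma|}\longrightarrow 0,
\]
so $(u_{i\bar j})$ is a constant positive Hermitian matrix. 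This is the honest conclusion --- $\omega$ is a constant-coefficient K\"ahler metric --- and it is equivalent to ``$u$ equals a quadratic polynomial plus a pluriharmonic function.'' The paper's phrasing ``$u$ is a quadratic polynomial'' should be read modulo this pluriharmonic ambiguity; your argument, once the unwinding is redirected to $u_{i\bar j}$ rather than $u$, proves exactly that.
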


\vskip 20pt

\end{document}